\definecolor{uuuuuu}{rgb}{0.26666666666666666,0.26666666666666666,0.26666666666666666}
\definecolor{xdxdff}{rgb}{0.49019607843137253,0.49019607843137253,1.}
\definecolor{ffqqqq}{rgb}{1.,0.,0.}
\definecolor{uuuuuu}{rgb}{0.26666666666666666,0.26666666666666666,0.26666666666666666}
\definecolor{qqwuqq}{rgb}{0.,0.39215686274509803,0.}
\definecolor{zzttqq}{rgb}{0.6,0.2,0.}
\definecolor{xdxdff}{rgb}{0.49019607843137253,0.49019607843137253,1.}
\definecolor{qqqqff}{rgb}{0.,0.,1.}
\definecolor{cqcqcq}{rgb}{0.7529411764705882,0.7529411764705882,0.7529411764705882}
\definecolor{uuuuuu}{rgb}{0.26666666666666666,0.26666666666666666,0.26666666666666666}
\definecolor{qqwuqq}{rgb}{0.,0.39215686274509803,0.}
\definecolor{zzttqq}{rgb}{0.6,0.2,0.}
\definecolor{xdxdff}{rgb}{0.49019607843137253,0.49019607843137253,1.}
\definecolor{qqqqff}{rgb}{0.,0.,1.}
\definecolor{cqcqcq}{rgb}{0.7529411764705882,0.7529411764705882,0.7529411764705882}
\theoremstyle{plain}
\newtheorem{theorem}[subsubsection]{Theorem}
\newtheorem{lemma}[subsubsection]{Lemma}
\newtheorem{prop1}[subsection]{Proposition}
\newtheorem{prop}[subsubsection]{Proposition}
\newtheorem{remark}[subsubsection]{Remark}
\newtheorem{defi}[subsubsection]{Definition}
\theoremstyle{definition}
\newtheorem{cor}[subsubsection]{Corollary}
\newtheorem{note}[subsubsection]{Note}
\newtheorem{example}[subsubsection]{Example}
\newcommand{\uu}{\cup}
\newcommand{\ii}{\cap}
\newcommand{\UU}{\bigcup}
\newcommand{\sci}{\subset}
\newcommand{\es}{\emptyset}
\newcommand{\set}[1]{\{#1\}}
\newcommand{\ga}{\alpha}
\newcommand{\gb}{\beta}
\newcommand{\gk}{\kappa}
\newcommand{\gn}{\nu}
\newcommand{\go}{\omega}
\newcommand{\gt}{\tau}
\newcommand{\tit}{\textit}
\newcommand{\C}[1]{\mathcal{#1}}
\newcommand{\D}[1]{\mathbb{#1}}
\newcommand{\te}{\text}
\begin{document}
To appear, Qualitative Theory of Dynamical Systems
\title{Canonical sequences of optimal quantization for condensation measures}

\author{Do\u gan \c C\"omez}
\address{Department of Mathematics \\
408E24 Minard Hall,
North Dakota State University
\\
Fargo, ND 58108-6050, USA.}
\email{dogan.comez@ndsu.edu}

\author{ Mrinal Kanti Roychowdhury}
\address{School of Mathematical and Statistical Sciences\\
University of Texas Rio Grande Valley\\
1201 West University Drive\\
Edinburg, TX 78539-2999, USA.}
\email{mrinal.roychowdhury@utrgv.edu}

\subjclass[2010]{28A80, 60Exx, 94A34.}
\keywords{Canonical sequence, optimal quantizers, quantization error, condensation measure, self-similar measure, quantization dimension, quantization coefficient}
\thanks{The research of the second author was supported by U.S. National Security Agency (NSA) Grant H98230-14-1-0320}

\date{}
\maketitle

\pagestyle{myheadings}\markboth{Do\u gan \c C\"omez and Mrinal Kanti Roychowdhury}{Canonical sequences of optimal quantization for condensation measures}

\begin{abstract}

We consider condensation measures of the form $P:=\frac 13  P\circ S_1^{-1}+ \frac 13  P\circ S_2^{-1}+ \frac 13  \nu $ associated with the system $(\mathcal{S}, (\frac 13, \frac 13, \frac 13), \gn) , $ where $\mathcal{S}=\{S_i\}_{i=1}^2 $ are contractions
and $ \nu$ is a Borel probability measure on $\mathbb R$ with compact support.
Let $D(\mu)$ denote the quantization dimension of a measure $\mu$ if it exists.  In this paper, we study self-similar measures $\gn$ satisfying $D(\gn )>\gk$, $D(\gn)<\gk$, and  $D(\gn)=\gk , $ respectively, where $\gk $ is the unique number satisfying
$[\frac13 (\frac{1}{5})^2]^{\frac{\gk}{2+\gk}}=\frac 12. $ For each case we construct two sequences $a(n)$ and $F(n)$, which are utilized in determining the optimal sets of $F(n)$-means and the $F(n)$th quantization errors for $P. $   We also show that for each measure $\gn$ the quantization dimension $D(P)$ of $P$ exists and satisfies $D(P)=\max\set{\gk, D(\gn)}. $  Moreover, we show that for $D(\gn)>\gk$, the $D(P)$-dimensional lower and upper quantization coefficients are finite, positive and unequal; and for $D(\gn)\leq \gk$, the $D(P)$-dimensional lower quantization coefficient is infinity.

\end{abstract}

\section{Introduction}

The problem of quantization of probability measures emanated from information theory and has been a subject of intense investigation in the past decades.
Rigorous mathematical treatment of basic quantization theory can be found in \cite{GL1}; for further results and applications one is referred to \cite{BW, G, GG, GLP, GN, KZ1, KZ2, P1, P2, Z1, Z2}. Let $P$ denote a Borel probability measure on $\D R^d$ with the Euclidean norm $\|\cdot\|, \ d\geq 1$. For a finite set $\ga \subset \D R^d, $ the value $V(P; \ga):=\int \min_{a \in \ga} \|x-a\|^2 dP(x) $ is often referred to as the \tit{cost} or \tit{distortion error} for $\ga$ with respect to $P$. Then, the $n$-th \textit{quantization
error} for $P$ is defined by
\[V_n:=V_n(P)=\inf\set{V(P; \ga) :\alpha \subset \mathbb R^d, \text{ card}(\alpha) \leq n}.\]
If $\int \| x\|^2 dP(x)<\infty , $ then there exists $\ga\sci \D R^d$ for which the infimum is achieved (see \cite{AW, GKL, GL, GL1}). A set $\ga$ for which the infimum is achieved and contains no more than $n$ points, i.e.,
$V_n=V(P; \ga)$, is called an \textit{optimal set of $n$-means} or an \textit{optimal set of $n$-quantizers}.  Indeed, if the support of $P $ is an infinite set, then an optimal set of $n$-means always contains exactly $n$ elements (see Theorem 4.12 \cite{GL1} and Theorem 2.4 \cite{GL3}).
The limit
$D(P):=\lim_{n\to \infty}  \frac{2\log n}{-\log V_n(P)}, $
if exists, is called the \tit{quantization dimension} of $P$, which measures the speed at which the specified measure of the error tends to zero as $n \to \infty . $ It turns out that determining the optimal sets of $n$-means is much more difficult than calculating the quantization dimension of a measure \cite{DR, GL2, R1, R2, RR}.  If $D(P):=s$ exists, we are further concerned with the $s$-dimensional lower and upper quantization coefficients defined by $\liminf_{n\to \infty} n^{\frac 2 s} V_n(P)$ and $\limsup_{n\to \infty} n^{\frac 2 s} V_n(P)$, respectively. These two quantities provide more accurate information for the asymptotics of the quantization error than the quantization dimension. Given a finite subset $\ga\sci \D R^d$, the \tit{Voronoi region} $M(a|\ga)$ generated by $a\in \ga$ is defined as the set of points $x \in \D R^d$ such that $a$ is the nearest point to $x$ than to all other elements in $\ga$.
If $\ga$ is an optimal set and $a\in\ga$, then $a$ is the \tit{conditional expectation} of the random variable $X$ given that $X$ takes values in the Voronoi region of $a$ \cite{GG, GL1}. Such a point $a$ is also refereed to as the \tit{centroid} of the Voronoi region $M(a|\ga)$ with respect to $P$.

Let $S_i : \D R^d \to \D R^d$ for $i=1, 2, \cdots, N$ be contracting similarities, $\bold{p}=(p_0, p_1, \cdots, p_N)$ be a probability vector, and $\gn$ be a Borel probability measure on $\D R^d$ with compact support. A probability measure $P$ on $\D R^d$ such that $P=\sum_{j=1}^N p_j P\circ S_j^{-1} +p_0\gn $
is called an \tit{inhomogeneous self-similar measure} associated with the system $(\mathcal{S}, \bold{p} , \gn), $ where $\mathcal{S}=\{S_i\}_{i=1}^N . $  For such an inhomogeneous self-similar measure $P$, if $C$ is the support of $\gn,$ then the support of $P$ is equal to the unique nonempty compact set $K:=K_C \subset \D R^d$ satisfying $K=\mathop{\uu}\limits_{j=1}^N S_j(K)\uu C.$
For details about inhomogeneous self-similar sets and measures one can see \cite{OS}. Following \cite{B, L}, we call $(\mathcal{S},\bold{p} , \gn)$ a \tit{condensation system}. The measure $P$ is also called the \tit{condensation measure} or the \tit{attracting measure} for $(\mathcal{S}, \bold{p}, \gn)$, and the set $K$, which is the support of the measure $P$, is called the \tit{attractor} of the system.

Consider a condensation measure $P$ such that $P=\frac 1 3 P\circ S_1^{-1}+\frac 13 P\circ S_2^{-1}+\frac 13 \gn$, where $\gn$ is a self-similar measure on $\D R$ and $S_1, S_2$ are similarity mappings on $\D R$ given by $ S_1(x)= c x, \ S_2(x)= c x+ r, $ with $0<c \leq \frac{1}{3} $ and $c+r=1 . $  Let $ \gk $ be the number satisfying $(\frac{c^2}{3})^{\frac{\gk}{2+\gk}}=\frac{1}{2}, $ which we will call as the {\it critical value} of the condensation system $(\mathcal{S}, \bold{p}, \gn). $
In this article our aim is two-fold.  The main goal is to study the quantization for condensation measures associated to the system
$(\mathcal{S}, \bold{p}, \gn); $ in particular, we show that there exist two sequences $\set{a(n)}_{n\geq 1}$ and $\set{F(n)}_{n\geq 1}$, which we call as \tit{canonical sequences}, that are instrumental in this study.  With the help of canonical sequences, for a variety of self-similar measures $\gn , $ we obtain closed formulas for the optimal sets of $F(n)$-means and the $F(n)$th quantization errors for the condensation measures $P$ for all $n\geq 1$.  Once the optimal sets of $F(n)$-means are known, we develop a simple method to determine the optimal sets of $n$-means for all $n\in \D N ,$ and calculate quantization dimension $D(P)$ and the $D(P)$-dimensional quantization coefficients for $P.$
Hence, we furnish the {\bf complete quantization program} for the condensation systems under consideration, which was not done before in the literature.
The second aim is to investigate the relationship between the quantization dimension $D(P)$ and the $D(P)$-dimensional quantization coefficients as the measure $\nu $ changes.
It turns out that $D(P)$ satisfies the relation $D(P)=\max\set{\gk, D(\gn)}. $   Furthermore, we determine that for $D(\gn)>\gk$, the $D(P)$-dimensional lower and upper quantization coefficients are finite, positive and unequal.  On the other hand, for $D(\gn)< \gk$, and  $D(\gn)=\gk$, the $D(P)$-dimensional lower quantization coefficients are infinity.

When $p_0=0$, the inhomogeneous self-similar measure on $\mathbb{R}^d $ defined above by $P=\sum_{j=1}^N p_j P\circ S_j^{-1} +p_0\nu $ reduces to the self-similar measure $P=\sum_{j=1}^N p_j P\circ S_j^{-1}$.  Quantization dimension for such self-similar measures and self-conformal measures were determined by Graf-Luschgy \cite{GL4} and Lindsay-Mauldin \cite{LM}, respectively.  Following these, the quantization dimensions were determined for many fractal probability measures \cite{R4, R5, R6, R7}.  The optimal sets of $n$-means and the $n$th quantization errors for the (standard) Cantor self-similar measure were determined by Graf-Luschgy \cite{GL2}.  Due to their intricate structures, condensation measures which are more general than self-similar measures, were not studied widely in the literature; in particular, this is the case for the features investigated in this article.
To the best of the authors' knowledge, the current article is the first study of the optimal sets of $n$-means and the $n$th quantization errors for condensation measures, which include all (Cantor) self-similar measures as a special case.  Hence, our results are significant generalization of those in \cite{GL2}. The novelty in obtaining the quantization for condensation measures is the introduction of canonical sequences and the order $\succ $ for the associated optimal sets. As a by-product  we also derive closed formulas for the quantization errors involved at each step, which lead to direct calculation of the quantization dimensions and study the quantization coefficients for the probability distributions involved.

Techniques utilized in the article can be applied or can further be improved to investigate the optimal sets of $n$-means and the $n$th quantization errors for many other (fractal-based) singular probability measures. All these results are new and, while providing some insight into the behavior of such systems, they also bring up several questions for further inquiry.  We outline some of these at the end of the paper.
The points in optimal sets, being the centroids of their Voronoi regions, are an evenly spaced distribution of sites
in the domain with minimum distortion error with respect to a given probability measure. Such settings are frequently surface in many fields, such as numerical probability \cite{BC, PPP}, clustering, data compression, optimal mesh generation, signal processing, cellular biology, optimal quadrature, and geographical
optimization \cite{DFG, OBSC, HCHSVH, KKR, S, ABDHW, LZSC}. Therefore, the result of the paper have potential for being very useful in addressing problems in these fields.

The arrangement of the paper is as follows:  In Section~\ref{first}  basic definitions, lemmas and propositions are developed. In order to bring some transparency to the arguments and reveal the connections between the quantization dimension $D(\nu)$ and the critical value $\kappa $, in Subsection~\ref{subfirst2} four important cases that will be focus of the article are outlined. Section~\ref{second} provides a thorough investigation of the quantization for the condensation measure $P$ with self-similar measure $\nu $ satisfying $D(\nu)>\kappa $. Also, utilizing canonical sequences, we have determined all the optimal sets of $n$-means and the associated $n$th quantization errors, and calculated the quantization dimension of $P$ (Theorem 3.4.1). Furthermore, it is also shown that the quantization coefficient does not exist, and the lower and the upper quantization coefficients are finite and positive (Theorem 3.4.2). Section~\ref{third} is devoted to the investigation of quantization for the condensation measure $P$ with self-similar measure $\nu$ satisfying $D(\nu)<\kappa . $  Using closed formulas obtained for quantizations errors, we calculate the quantization dimension of $P$ (Theorem 4.4.1), and show that the quantization coefficient is infinity (Theorem 4.4.2). In Section~\ref{fourth}, we have considered two condensation measures: one with $D(\nu)>\kappa$ and one with $D(\nu)<\kappa$; and show that the quantization coefficients in both the cases are infinity. The results in all the above sections lead us to some observations and remarks outlined in Concluding Remarks ~\ref{conr} which also contains some open problems to be investigated.

In the sequel, all the arguments will be given for $\bold{p}=(\frac 13, \frac 13, \frac 13) $ for simplicity.

\section{Preliminaries}\label{first}

\subsection{Basic definitions, lemmas and propositions} \label{subfirst1}

Let $P$ be the condensation measure associated with the condensation system $(\mathcal{S}, \bold{p}, \gn)$,
where $\bold{p}=(\frac 13, \frac 13, \frac 13) $ and $\mathcal{S}$ is as defined above.  Consider the self-similar measure $\gn$ given by $\gn=\frac 12 \gn \circ T_1^{-1}+\frac 12\gn\circ T_2^{-1}$, where $T_1(x)=s x+ a(1-s)$ and $T_2(x)=sx+ b(1-s)$ for all $x\in \D R ,\ 0<s\leq \frac 13,$ and $a=\frac{1+c}{3}, \ b=\frac{2-c}{3}. $  These values are to ensure that the associated Cantor sets are disjoint.

Let $I=\set{1, 2}$. By a word $\go$ of length $k$ over the alphabet $I$, we mean $\go:=\go_1\go_2\cdots \go_k \in I^k ; $ a word of length zero is called the empty word and is denoted by $\es$.  $I^\ast$ will denote the set of all words over the alphabet $I$ including the empty word $\es$.
For $\go , \gt \in I^\ast ,$ their concatenation is denoted by  $ \go\gt ;$ i.e., if $\go:=\go_1\go_2\cdots \go_k$ and $\gt:=\gt_1\gt_2\cdots \gt_\ell,$ then $\go\gt:=\go_1\cdots \go_k\gt_1\cdots \gt_\ell$.
Set $J:=[0, 1]$ and $L:=[a, b]$.  For $\go=\go_1\go_2 \cdots\go_k \in I^k$, set $S_\go:=S_{\go_1} \circ \cdots \circ S_{\go_k}$, $T_\go:=T_{\go_1} \circ \cdots \circ T_{\go_k}$, $J_\go:=S_\go(J)$, and $L_\go:=S_\go(L)$. For $\go =\es, \  S_\es$ is the identity map on $\D R$, $J_\es=J$ and $L_\es=L$. If $C$ is the support of $\gn$, then \[C:=\mathop{\ii}\limits_{k\geq 0} \mathop{\uu}\limits_{\go\in I^k} T_\go([a,b]).\]
Iterating $P=\frac 13 \mathop{\sum}\limits_{j=1}^2 P\circ S_{j}^{-1} + \frac 13\gn$ and $\gn=\frac 12\gn\circ T_1^{-1}+\frac 12\gn\circ T_2^{-1}$, we have
\begin{equation} \label{eq1} P=\frac 1{3^n} \mathop{\sum}\limits_{|\go|=n} P\circ S_{\go}^{-1} +\mathop{\sum}\limits_{k=0}^{n-1}\frac 1{3^{k+1}} \mathop{\sum}\limits_{|\go|=k} \gn \circ S_{\go}^{-1}, \te{ and } \gn=\frac 1{2^k} \mathop{\sum}\limits_{\go \in I^k} \gn\circ T_\go^{-1}, \ \text{for all}\ k\geq 1 .
\end{equation}
The measure $P$ is `symmetric' about the point $\frac 12$, i.e., if two intervals of equal lengths are equidistant from the point $\frac 12$, then they have the same $P$-measure. For $n\geq 1$, $\ga_n:=\ga_n(P)$ will denote an optimal set of $n$-means with respect to $P$, and $\ga_n(\gn)$ will represent an optimal set of $n$-means with respect to the self-similar measure $\gn$. Similarly, $V_n:=V_n(P)$ and $V_n(\gn)$ represent the $n$th quantization error with respect to $P$ and $\gn , $ respectively. By $P|_L$, we denote the conditional probability measure on $L$, i.e., for any Borel $B\sci \D R$,
\begin{equation} \label{eq345}
P|_L(B)=\frac{P(B\ii L)}{P(L)}.
\end{equation}
Notice that $P|_L=\gn$.  Using equation~\eqref{eq1}, we deduce the following lemma.

\begin{lemma} \label{lemma1} Let $g: \D R \to \D R^+$ be Borel measurable and $n\in \D N$. Then,
\[\int g \,dP=\frac 1 {3^n} \sum_{|\go|=n}\int (g\circ S_\go) \,dP+\sum_{k=0}^{n-1} \frac 1 {3^{k+1}}\sum_{|\go|=k} \int (g\circ S_\go) \,d\gn.\]
\end{lemma}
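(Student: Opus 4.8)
The plan is to obtain the identity directly from equation~\eqref{eq1} together with the standard change-of-variables (image measure) formula. Recall that for a Borel probability measure $\gm$ on $\D R$, a Borel map $S\colon\D R\to\D R$, and a nonnegative Borel function $g$, one has $\int g\,d(\gm\circ S^{-1})=\int (g\circ S)\,d\gm$; this is verified first for $g$ an indicator function, then for simple $g$ by linearity, and then for an arbitrary nonnegative Borel $g$ by monotone convergence.

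First I would fix $n\in\D N$ and invoke equation~\eqref{eq1}, which writes $P$ as the finite convex combination
\[
P=\frac 1{3^n}\sum_{|\go|=n}P\circ S_\go^{-1}+\sum_{k=0}^{n-1}\frac 1{3^{k+1}}\sum_{|\go|=k}\gn\circ S_\go^{-1},
\]
the coefficients summing to $1$ since $\big(\tfrac 23\big)^n+\tfrac 13\sum_{k=0}^{n-1}\big(\tfrac 23\big)^k=1$. Because $g\geq 0$, every integral below lies in $[0,\infty]$ and is thus well defined, so I can integrate the displayed equation term by term to obtain
\[
\int g\,dP=\frac 1{3^n}\sum_{|\go|=n}\int g\,d(P\circ S_\go^{-1})+\sum_{k=0}^{n-1}\frac 1{3^{k+1}}\sum_{|\go|=k}\int g\,d(\gn\circ S_\go^{-1}).
\]

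Then I would apply the change-of-variables formula to each summand, namely $\int g\,d(P\circ S_\go^{-1})=\int (g\circ S_\go)\,dP$ and $\int g\,d(\gn\circ S_\go^{-1})=\int (g\circ S_\go)\,d\gn$, which gives exactly the asserted identity. There is no genuine obstacle here; the only point deserving a word of care is the term-by-term integration and the term-by-term use of the change-of-variables formula, which are legitimate precisely because the integrand is nonnegative, so no integrability hypothesis on $g$ is required (alternatively, one applies the finite-measure version to the bounded function $g\mm M$ and lets $M\to\infty$ via monotone convergence). One could instead argue inductively from the one-step relations $P=\frac 13\sum_{j=1}^2 P\circ S_j^{-1}+\frac 13\gn$ and $\gn=\frac 12\gn\circ T_1^{-1}+\frac 12\gn\circ T_2^{-1}$, but deriving the statement straight from \eqref{eq1} is the cleanest route.
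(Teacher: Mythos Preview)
Your argument is correct and is exactly the approach the paper intends: the paper states that the lemma is deduced from equation~\eqref{eq1} and gives no further details, and you have simply written out that deduction via the standard change-of-variables formula for image measures, with the appropriate remark that nonnegativity of $g$ makes all integrals well defined.
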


\begin{lemma} Let $K$ be the support of the condensation measure. Then, for any $n\geq 1$,
\[K\sci (\mathop{\uu}\limits_{\go\in I^n} J_{\go})\UU  \Big (\mathop{\uu}\limits_{k=0}^{n-1}(\mathop{\uu}\limits_{\go\in I^k} L_\go)\Big )\sci J.\]
\end{lemma}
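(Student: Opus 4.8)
The plan is to prove the two inclusions $K \subseteq \bigl(\bigcup_{\go\in I^n} J_\go\bigr) \cup \bigl(\bigcup_{k=0}^{n-1}\bigcup_{\go\in I^k} C_\go\bigr)$ and $\bigl(\bigcup_{\go\in I^n} J_\go\bigr) \cup \bigl(\bigcup_{k=0}^{n-1}\bigcup_{\go\in I^k} C_\go\bigr) \subseteq J$ by induction on $n$, using the fixed-point characterization $K = S_1(K)\cup S_2(K) \cup C$ of the attractor together with the containment $C \subseteq L \subseteq J$. First I would verify the base case $n=1$: since $K = S_1(K)\cup S_2(K)\cup C = J_1 \cup J_2 \cup C_\es$ would follow once we know $S_i(K) \subseteq S_i(J) = J_i$, which in turn reduces to $K \subseteq J$; and $K \subseteq J$ holds because $J=[0,1]$ satisfies $S_1(J)\cup S_2(J)\cup C \subseteq J$ (as $S_1(J)=[0,\tfrac15]$, $S_2(J)=[\tfrac45,1]$, and $C\subseteq L=[\tfrac25,\tfrac35]\subseteq J$), so $J$ is a closed set mapped into itself by the condensation operator, forcing the minimal invariant set $K$ to lie inside it.

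For the inductive step, assume $K \subseteq \bigl(\bigcup_{\go\in I^{n}} J_\go\bigr) \cup \bigl(\bigcup_{k=0}^{n-1}\bigcup_{\go\in I^{k}} C_\go\bigr)$. Apply $S_i$ to both sides for $i=1,2$, using that $S_i(J_\go) = J_{i\go}$ and $S_i(C_\go) = C_{i\go}$ by definition of the iterated maps and of $J_\go, C_\go$. This yields $S_i(K) \subseteq \bigl(\bigcup_{\go\in I^{n}} J_{i\go}\bigr) \cup \bigl(\bigcup_{k=0}^{n-1}\bigcup_{\go\in I^{k}} C_{i\go}\bigr)$. Taking the union over $i\in\{1,2\}$ and then adjoining $C = C_\es$ (the $k=0$ term of the outer sum at level $n$), one reassembles $K = S_1(K)\cup S_2(K)\cup C$ into exactly $\bigl(\bigcup_{\go\in I^{n+1}} J_\go\bigr) \cup \bigl(\bigcup_{k=0}^{n}\bigcup_{\go\in I^{k}} C_\go\bigr)$, completing the first inclusion. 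The second inclusion $\bigl(\bigcup_{\go\in I^n} J_\go\bigr) \cup \bigl(\bigcup_{k=0}^{n-1}\bigcup_{\go\in I^k} C_\go\bigr) \subseteq J$ follows the same way: $J_\go = S_\go(J) \subseteq J$ and $C_\go = S_\go(C) \subseteq S_\go(J) \subseteq J$, each using that $S_\go$ maps $J$ into itself (immediate from $S_1(J), S_2(J) \subseteq J$ and composition).

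The only genuinely substantive point is the self-map property $S_1(J)\cup S_2(J)\cup C \subseteq J$, which is a direct numerical check from the explicit forms of $S_1, S_2$ and the fact that $C$, being the attractor of the $T_i$'s with $C \subseteq L = [\tfrac25,\tfrac35]$, is contained in $J$; everything else is bookkeeping with the concatenation notation $S_{i\go} = S_i\circ S_\go$. I do not anticipate any real obstacle here — the statement is essentially the observation that the inhomogeneous attractor, like the classical self-similar attractor, is covered at every level by the cylinder sets $J_\go$ of that level together with the "condensation cylinders" $C_\go$ accumulated along the way, and the proof is the standard iteration of the invariance identity \eqref{eq1}.
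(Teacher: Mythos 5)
Your proof is correct, but it takes a genuinely different route from the paper's. The paper argues measure-theoretically: it first checks the purely set-theoretic inclusion $(\cup_{\go\in I^n} J_\go)\cup(\cup_{k=0}^{n-1}\cup_{\go\in I^k}L_\go)\subseteq J$, then uses the iterated identity \eqref{eq1} to compute that this (disjoint, closed) union has $P$-measure $\frac{2^n}{3^n}+\sum_{k=0}^{n-1}2^k 3^{-(k+1)}=1$, and concludes that $K$, being the support of $P$, must lie inside any closed set of full measure. Your argument instead runs a set-theoretic induction on the invariance identity $K=S_1(K)\cup S_2(K)\cup C$, with the base case $K\subseteq J$ supplied by the self-map property $S_1(J)\cup S_2(J)\cup C\subseteq J$ and the minimality/uniqueness of the attractor. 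Both are sound. Your version needs no measure theory at all and works directly with the condensation sets $C_\go=S_\go(C)$ exactly as the lemma is stated (the paper silently replaces $C_\go$ by the larger intervals $L_\go=S_\go(L)\supseteq C_\go$ in its proof, a harmless but real notational mismatch); the one point you should make explicit is the standard fact that $F(A)\subseteq A$ for a compact $A$ forces the unique fixed point $K$ of the Hutchinson-type operator to satisfy $K\subseteq A$ (e.g.\ via $K=\bigcap_n F^n(J)$ when $F(J)\subseteq J$). The paper's route, by contrast, yields as a byproduct the measure values $P(J_\go)=3^{-|\go|}$ and $P(L_\go)=3^{-(|\go|+1)}$ that are used repeatedly later, which is presumably why the authors chose it.
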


\begin{proof}  Notice that $J_1\uu L\uu J_2\sci J$, $J_{11}\uu L_1\uu J_{12}\sci J_1$, and $J_{21}\uu L_2\uu J_{22}\sci J_2$. In fact, for any $k\geq 1$, if $\go \in I^k$, then $J_{\go1} \uu L_{\go}\uu J_{\go2}\sci J_\go$. Again, notice that for any $\go\in I^\ast$, $J_{\go1}\uu J_{\go2}\sci J_{\go}$, and the intervals $L_{\go1}, L_{\go2}, L_\go$ are disjoint. Thus, it follows that
\[(\mathop{\uu}\limits_{\go\in I^n} J_{\go})\UU  \Big (\mathop{\uu}\limits_{k=0}^{n-1}(\mathop{\uu}\limits_{\go\in I^k} L_\go)\Big )\sci J.\]
The sets being disjoint, we have
\begin{align*}
&P\Big((\mathop{\uu}\limits_{\go\in I^n} J_{\go})\UU  \Big (\mathop{\uu}\limits_{k=0}^{n-1}(\mathop{\uu}\limits_{\go\in I^k} L_\go)\Big )\Big)=P(\mathop{\uu}\limits_{\go\in I^n} J_{\go})+P\Big (\mathop{\uu}\limits_{k=0}^{n-1}(\mathop{\uu}\limits_{\go\in I^k} L_\go)\Big )\\
&=\sum_{\go\in I^n} P(J_\go)+\sum_{k=0}^{n-1} \sum_{\go \in I^k} P(L_\go)=\sum_{\go\in I^n} \frac 1{3^n} +\sum_{k=0}^{n-1} 2^k\cdot \frac 1{3^{k+1}}=1.
\end{align*}
Again, $P(K)=1$ and $K$ is the support of $P$. Hence, $K\sci (\mathop{\uu}\limits_{\go\in I^n} J_{\go})\UU  \Big (\mathop{\uu}\limits_{k=0}^{n-1}(\mathop{\uu}\limits_{\go\in I^k} L_\go)\Big )$.
\end{proof}

Let $E(\gn)$ and $W:=V(\gn)$ represent the expected value and the variance of $\gn , $ respectively.

\begin{lemma} \label{lemma2} For the self-similar measure $\gn$ we have
\begin{itemize}
\item[(i)] $E(\gn)=\frac 12$ and $W= \frac{(1-s)(1-2c)^2}{36(1+s)} $,
\item[(ii)] for any $x_0\in \D R$, $\int(x-x_0)^2 d\gn=(x_0-\frac 12)^2+V(\gn)$.
\end{itemize}
\end{lemma}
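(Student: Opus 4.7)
The plan is to exploit the self-similarity relation $\gn=\tfrac12\gn\circ T_1^{-1}+\tfrac12\gn\circ T_2^{-1}$ together with the change-of-variables identity $\int f\,d\gn=\tfrac12\int(f\circ T_1)\,d\gn+\tfrac12\int(f\circ T_2)\,d\gn$ to set up one-variable linear (resp.\ affine) equations whose unique solutions are $E(\gn)$ and $E_\gn[X^2]$. Part (ii) is then a three-line consequence of part (i).

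For (i), first I would apply the self-similarity identity to $f(x)=x$. Since $T_1(x)+T_2(x)=\tfrac23 x+\tfrac{4}{15}+\tfrac{2}{5}=\tfrac23 x+\tfrac23$, this gives
\[
E(\gn)=\tfrac12\bigl(\tfrac13 E(\gn)+\tfrac{4}{15}\bigr)+\tfrac12\bigl(\tfrac13 E(\gn)+\tfrac{2}{5}\bigr)=\tfrac13 E(\gn)+\tfrac13,
\]
and solving yields $E(\gn)=\tfrac12$. Next I would apply the same identity to $f(x)=x^2$, using $T_i(x)^2=\tfrac{1}{9}x^2+\tfrac{2}{3}b_ix+b_i^2$ with $b_1=\tfrac{4}{15}$, $b_2=\tfrac{2}{5}$. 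Expanding, averaging, and substituting $E(\gn)=\tfrac12$ produces a linear equation in $E_\gn[X^2]$ whose solution is $E_\gn[X^2]=\tfrac{459}{1800}$, and hence $W=V(\gn)=E_\gn[X^2]-\tfrac14=\tfrac{9}{1800}=\tfrac{1}{200}$.

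For (ii), I would simply expand $(x-x_0)^2=(x-\tfrac12)^2+2(\tfrac12-x_0)(x-\tfrac12)+(x_0-\tfrac12)^2$ and integrate termwise: the first term gives $V(\gn)$, the middle term vanishes since $\int(x-\tfrac12)\,d\gn=E(\gn)-\tfrac12=0$ by part (i), and the last term contributes $(x_0-\tfrac12)^2$ because $\gn$ is a probability measure.

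There is no real obstacle here; the only thing to watch is arithmetic accuracy in the $x^2$ computation, in particular the common-denominator reduction $\tfrac{16}{225}+\tfrac{36}{225}=\tfrac{52}{225}$ and the final step $\tfrac{459}{1800}-\tfrac{450}{1800}=\tfrac{1}{200}$. The conceptual content is entirely that the self-similarity of $\gn$ turns moment computations into fixed-point equations for scalars.
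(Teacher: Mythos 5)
Your proposal is correct and follows essentially the same route as the paper: both apply the self-similarity identity to $f(x)=x$ and $f(x)=x^2$ to obtain $E(\gn)=\frac 12$ and $\int x^2\,d\gn=\frac{51}{200}$ (your $\frac{459}{1800}$), whence $W=\frac{1}{200}$, and part (ii) is the standard bias--variance expansion. The arithmetic checks out.
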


\begin{proof} Since
$ \int x d\gn=\frac 12 \int [ s x+a(1-s)] d\gn+\frac 12\int [sx+b(1-s)] d\gn $ and $a+b=1, $ we have $E(\gn)=\int x d\gn=\frac 12 .$  Moreover,
$$ \aligned \int x^2 d\gn & =\frac 12 \int [sx+ a(1-s)]^2 d\gn+\frac 12\int [sx+b(1-s)]^2 d\gn \\
& =s^2 \int x^2 d\gn + s(1-s)\int x d\gn + \frac{1}{2} (a^2+b^2)(1-s)^2 \int d\gn, \endaligned $$
which implies that $\int x^2 d\gn =\frac{s+(a^2+b^2)(1-s)}{2(1+s) }. $
Hence,
$$ W:=V(\gn) =\int x^2 d\gn-(\int x d\gn)^2= \frac{s+(a^2+b^2)(1-s)}{2(1+s) }-\frac 14
= \frac{(1-s)(1-2c)^2}{36(1+s)}.  $$
For any $x_0\in \D R$, $\int(x-x_0)^2 d\gn=(x_0-\frac 12)^2+V(\gn)$ follows from the standard probability arguments.
\end{proof}

Let $E(P)$ and $V(P)$ represent the expected value and the variance of $P ,$ respectively.
\begin{lemma} \label{lemma3} For the condensation measure $P$, we have
$$ E(P)=\frac 12, \  V(P)= \frac{(1-c)^2}{2 (3-2c^2)} + V(\nu),  $$
and, for any $x_0\in \D R$, $\int(x-x_0)^2 dP=(x_0-\frac 12)^2+V(P)$.
\end{lemma}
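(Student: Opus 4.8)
The plan is to compute $E(P)$ and $V(P)$ directly from the self-similarity relation $P=\frac13 P\circ S_1^{-1}+\frac13 P\circ S_2^{-1}+\frac13\gn$ by integrating the functions $x$ and $x^2$, exactly as was done for $\gn$ in Lemma~\ref{lemma2}, and then deduce the parallel-axis formula from the standard probability identity. For the first moment, I would write
\[
\int x\,dP=\frac13\int x\,dP\circ S_1^{-1}+\frac13\int x\,dP\circ S_2^{-1}+\frac13\int x\,d\gn
=\frac13\int S_1(x)\,dP+\frac13\int S_2(x)\,dP+\frac13\int x\,d\gn,
\]
substitute $S_1(x)=\frac15x$, $S_2(x)=\frac15x+\frac45$, and $\int x\,d\gn=\frac12$ from Lemma~\ref{lemma2}(i), and solve the resulting linear equation $E(P)=\frac{1}{15}E(P)+\frac{1}{15}E(P)+\frac{4}{15}+\frac16$ for $E(P)$; by the symmetry of $P$ about $\frac12$ already noted in the text, the answer must be $E(P)=\frac12$, and the computation should confirm it.

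For the second moment I would proceed the same way: expand
\[
\int x^2\,dP=\frac13\int S_1(x)^2\,dP+\frac13\int S_2(x)^2\,dP+\frac13\int x^2\,d\gn,
\]
use $S_1(x)^2=\frac{1}{25}x^2$, $S_2(x)^2=\frac{1}{25}x^2+\frac{8}{25}x+\frac{16}{25}$, plug in the already-known values $E(P)=\frac12$ and $\int x^2\,d\gn=\frac{51}{200}$ (computed inside the proof of Lemma~\ref{lemma2}), and solve the linear equation for $\int x^2\,dP$. Then $V(P)=\int x^2\,dP-\big(\int x\,dP\big)^2=\int x^2\,dP-\frac14$, which should come out to $\frac{65}{584}$. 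Finally, for arbitrary $x_0\in\D R$, the identity $\int(x-x_0)^2\,dP=(x_0-E(P))^2+V(P)=(x_0-\frac12)^2+V(P)$ follows immediately by expanding the square and using linearity together with the two moments just found, exactly as in Lemma~\ref{lemma2}(ii).

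There is no real obstacle here; the only thing to watch is the bookkeeping in the linear equation for $\int x^2\,dP$ — the factor $\frac{1}{25}$ appears on both the left (implicitly, after moving terms) and right, so one must collect the $\int x^2\,dP$ terms correctly to get $\big(1-\frac{2}{75}\big)\int x^2\,dP=\frac13\big(\frac{8}{25}\cdot\frac12+\frac{16}{25}\big)+\frac13\cdot\frac{51}{200}$ and then divide. Care with these rational arithmetic steps is what produces the stated value $\frac{65}{584}$; everything else is routine.
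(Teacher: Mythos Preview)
Your proposal is correct and follows essentially the same approach as the paper: integrate $x$ and $x^2$ against the self-similarity relation $P=\tfrac13 P\circ S_1^{-1}+\tfrac13 P\circ S_2^{-1}+\tfrac13\gn$, solve the resulting linear equations using $\int x^2\,d\gn=\tfrac{51}{200}$ from Lemma~\ref{lemma2}, and then invoke the standard identity for $\int(x-x_0)^2\,dP$. The paper abbreviates the $E(P)$ step to ``it is easy to see'' and records $\int x^2\,dP=\tfrac{211}{584}$ directly, but otherwise the computations coincide.
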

\begin{proof} It is straightforward to see that $E(P)=\frac 1 2$.   Now, using \eqref{eq1}, we have
\begin{align*}
\int x^2 dP & =\frac 1 3 \int  (S_1(x))^2 dP+\frac 13 \int (S_2(x))^2 dP +\frac 13 \int x^2 d\gn\\
&=\frac 1 3  \int (c x)^2 dP +\frac 13  \int (cx+r)^2 dP+\frac 13 \int x^2 d\gn;  \end{align*}
hence,
$ (1-\frac{2c^2}{3} )\ \int x^2 dP =\frac{1}{3} \Big(cr+r^2 +\int x^2 d\gn \Big). $  Since $c+r=1, $ this implies that
$$\int x^2 dP=\frac{1}{3-2c^2} (r+\int x^2 d\gn ). $$
Thus, it follows that
$$ \aligned V(P) & = \int x^2 dP- \frac{1}{4}= \frac{1}{3-2c^2} \Big( r+ \int x^2 d\gn \Big) - \frac{1}{4} \\
&=\frac{1}{3-2c^2} \Big( r+V(\nu)+ \frac{c^2-1}{2} \Big)
=  \frac{(1-c)^2}{2 (3-2c^2)} + V(\nu) . \endaligned $$

For any $x_0 \in \D R$, $\int(x-x_0)^2 dP=(x_0-\frac 12)^2+V(P)$ follows from the standard probability arguments.
\end{proof}

\medskip

\begin{note}
Let $\go\in I^k$, $k\geq 0$, and let $X$ be the random variable with probability distribution $P$. Then, by equation~\eqref{eq1}, it follows that $P(J_\go)=\frac 1{3^k}$, $P(L_\go)=\frac 1{3^{k+1}}$,
\begin{align*} & E(X : X \in J_\go)=\frac 1{P(J_\go)} \int_{J_\go} x dP=\int x d(P\circ S_\go^{-1})=\int S_\go(x) dP=S_\go(\frac 12), \te{ and }\\
&E(X : X \in L_\go)=\frac 1{P(L_\go)} \int_{L_\go} x dP=\int x d(\gn \circ S_\go^{-1})=\int S_\go(x) d\gn=S_\go(\frac 12).
\end{align*}
For any $x_0 \in \mathbb R$,
\begin{align} \label{eq4} \left\{\begin{array}{cc}  \int_{J_\go} (x-x_0)^2 dP(x) =\frac 1{3^k} \Big(c^{2k} V  +(S_\go(\frac 12)-x_0)^2\Big), \\
\int_{L_\go} (x-x_0)^2 dP(x) =\frac 1{3^{k+1}} \Big(c^{2k} W  +(S_\go(\frac 12)-x_0)^2\Big).&\end{array}\right.
\end{align}
On the other hand, for any $x_0\in \D R$, any $\go\in I^k$, $k\geq 0$,
\begin{align} \label{eq40} \int_{T_\go(L)} (x-x_0)^2 d\gn(x) =\frac 1{2^k} \Big(s^{2k} W  +(T_\go(\frac 12)-x_0)^2\Big).\end{align}
\end{note}

\begin{remark}
By Lemma~\ref{lemma3}, it follows that the optimal set of one-mean for the condensation measure $P$ consists of the expected value $\frac 12$ and the corresponding quantization error is the variance $V(P)$ of $P$, i.e., $V(P)=V_1(P)$. Notice that by `the variance of $P$' it is meant the variance of the random variable $X$ with distribution $P$.
\end{remark}


\begin{prop} (see \cite{GL2}) \label{prop0}  For $n\in \D N$ with $n\geq 2$ let $\ell(n)$ be the unique natural number with $2^{\ell(n)} \leq n<2^{\ell(n)+1}$. Let $\ga_n(\gn)$ be an optimal set of $n$-means for $\gn$, i.e., $\ga_n(\gn)\in \C C_n(\gn)$. Then,
\[\ga_n(\gn)=\set{T_\go(\frac 12) : \go \in I^{\ell(n)} \setminus \tilde I} \uu \set{T_{\go 1}(\frac 12) : \go \in \tilde I} \uu \set {T_{\go 2}(\frac 12) : \go \in \tilde I}\] for some $\tilde I\sci I^{\ell(n)}$ with card$(\tilde I)=n-2^{\ell(n)}$.
Moreover,
\begin{equation*}\label{eq11}
V_n(\gn)=\int\mathop{\min}\limits_{a\in\ga_n(\gn)} (x-a)^2 d\gn = (\frac{s^2}{2})^{\ell(n)} W\Big(2^{\ell(n)+1}-n+s^2(n-2^{\ell(n)})\Big).
\end{equation*}
\end{prop}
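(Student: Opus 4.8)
My first move would be to recognize that $\gn$ is an affine copy of the classical Cantor distribution, which reduces the proposition to a known theorem. Let $\gp(x)=\frac15 x+\frac25$ and let $\gm$ be the self-similar measure with $\gm=\frac12\gm\circ F_1^{-1}+\frac12\gm\circ F_2^{-1}$, where $F_1(x)=\frac x3$ and $F_2(x)=\frac x3+\frac23$; thus $\gm$ is the Cantor distribution, with $V(\gm)=\frac18$. A direct computation gives $\gp\circ F_i=T_i\circ\gp$ for $i=1,2$, hence $\gn=\gm\circ\gp^{-1}$. Since quantization is affine-covariant --- $\gp$ carries optimal sets of $n$-means for $\gm$ bijectively to optimal sets of $n$-means for $\gn$, and $V_n(\gn)=\frac1{25}V_n(\gm)$ --- the description of $\C C_n(\gn)$ and the value of $V_n(\gn)$ follow from Graf--Luschgy's analysis of the Cantor distribution in \cite{GL2}, with $W=V(\gn)=\frac1{25}\cdot\frac18=\frac1{200}$ (Lemma~\ref{lemma2}) playing the role of their variance $\frac18$.

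For a self-contained proof one reproves \cite{GL2} directly, in two stages. Stage one is a self-similar recursion: from $\gn=\frac12\sum_{i}\gn\circ T_i^{-1}$ and $(T_ix-a)^2=\frac19(x-T_i^{-1}a)^2$ one gets $V(\gn;\ga)=\frac12\sum_{i=1,2}\int\min_{a\in\ga}(T_ix-a)^2\,d\gn$ for every finite $\ga\sci\D R$. The structural lemma, proved by induction on $n$, is that for $n\geq2$ an optimal set lies in the convex hull of $C$ (centroid condition, Proposition~\ref{prop10}(iii)) and is partitioned by the central gap between $T_1(C)$ and $T_2(C)$ into nonempty blocks $\gb_1,\gb_2$, with $\gb_i$ serving $T_i(C)$, so that no Voronoi region straddles the gap and no point is wasted. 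Given this, $\int\min_{a\in\gb_i}(T_ix-a)^2\,d\gn\geq\frac19 V_{n_i}(\gn)$ where $n_i=\te{card}(\gb_i)$; together with the obvious reverse bound (place scaled optimal sets of the two pieces) this gives
\[V_n(\gn)=\frac1{18}\min\setm{V_{n_1}(\gn)+V_{n_2}(\gn)}{n_1+n_2=n,\ n_1,n_2\geq1}\qquad(n\geq2),\]
every optimal $n$-set arising from optimal $n_1$- and $n_2$-sets of the two pieces. Stage two solves the recursion by induction on $\ell$ with $2^\ell\leq n<2^{\ell+1}$: the cases $n=2,3$ are checked by hand from $V_1(\gn)=W$ and $V_2(\gn)=\frac W9$, and in general a splitting with $|n_1-n_2|\leq1$ realizes the minimum because $(V_m(\gn))_{m\geq1}$ is convex (equivalently $V_m(\gn)-V_{m+1}(\gn)$ is non-increasing), which itself follows from the recursion. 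Unfolding the nested optimal splittings down to level $\ell(n)$ produces exactly the sets displayed in the statement; the arbitrary choice of $\tilde I$ of size $n-2^{\ell(n)}$ reflects the ties in the convexity inequality, hence the non-uniqueness.

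The error formula then comes from the geometry: a level-$k$ cylinder $T_\go(C)$ that carries its own centroid $T_\go(\frac12)$ contributes $\frac1{2^k}\cdot\frac1{9^k}W=\frac W{18^k}$ to the distortion, whereas one carrying the two child-centroids contributes $\frac19\cdot\frac W{18^k}$; with $k=\ell(n)$ and $n-2^{\ell(n)}$ cylinders of the latter kind,
\[V_n(\gn)=\frac W{18^{\ell(n)}}\Big(2^{\ell(n)+1}-n+\frac19\bigl(n-2^{\ell(n)}\bigr)\Big).\]
The only genuinely delicate step in the self-contained route is the structural lemma --- proving that for $n\geq2$ an optimal set neither straddles the central gap nor wastes a point between $T_1(C)$ and $T_2(C)$; this is where the separation of the two pieces is essential, and it is established by a careful comparison of distortions. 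The affine-copy route sidesteps it by invoking \cite{GL2}.
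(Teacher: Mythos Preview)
The paper does not prove this proposition; it states it as known and simply cites \cite{GL2}. Your first move---conjugating $\gn$ to the standard Cantor distribution via the affine map $\gp(x)=\frac15x+\frac25$ and invoking affine covariance of optimal quantizers---is exactly the justification that the citation \cite{GL2} implicitly relies on, and your verification of $\gp\circ F_i=T_i\circ\gp$ and $W=\frac1{25}\cdot\frac18$ is correct. The self-contained sketch you give in the second half is a faithful outline of the Graf--Luschgy argument, so there is nothing to compare: your proposal \emph{is} the content behind the paper's bare citation.
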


The following lemma is straightforward; hence, we will state it without proof.

\begin{lemma}  \label{lemma4} Let $\ga$ be an optimal set of $n$-means for the condensation measure $P$. Then, for any $\go \in I^\ast$, the set $S_\go(\ga):=\set{S_\go(a) : a \in \ga}$ is an optimal set of $n$-means for the image measure $P\circ S_\go^{-1}$. Conversely, if $\gb$ is an optimal set of $n$-means for the image measure $P\circ S_\go^{-1}$, then $S_\go^{-1}(\gb)$ is an optimal set of $n$-means for $P$.
\end{lemma}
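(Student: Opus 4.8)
The plan is to exploit the single fact that each $S_\go$, being a composition of the similarities $S_1,S_2$, is itself a similarity transformation of $\D R$: there is a contraction ratio $c_\go:=\big(\frac 15\big)^{|\go|}$ with $|S_\go(x)-S_\go(y)|=c_\go|x-y|$ for all $x,y\in\D R$, and $S_\go$ is a bijection from $\D R$ onto $J_\go=S_\go(J)$. First I would record that for a finite set $\gb\sci\D R$ the map $\gb\mapsto S_\go^{-1}(\gb)$ is a bijection onto finite subsets of $\D R$ that preserves cardinality (since $S_\go$ is injective), so in particular it maps sets of cardinality $\le n$ onto sets of cardinality $\le n$.

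Next I would carry out the change of variables for the distortion. For any finite $\gb\sci\D R$,
\begin{align*}
V(P\circ S_\go^{-1};\gb)&=\int \min_{b\in\gb}\|x-b\|^2\,d(P\circ S_\go^{-1})(x)=\int \min_{b\in\gb}\|S_\go(y)-b\|^2\,dP(y)\\
&=\int \min_{a\in S_\go^{-1}(\gb)}\|S_\go(y)-S_\go(a)\|^2\,dP(y)=c_\go^2\int \min_{a\in S_\go^{-1}(\gb)}\|y-a\|^2\,dP(y)\\
&=c_\go^2\,V\big(P; S_\go^{-1}(\gb)\big).
\end{align*}
Taking the infimum over all $\gb$ with $\card(\gb)\le n$ and using the cardinality-preserving bijection from the first step gives $V_n(P\circ S_\go^{-1})=c_\go^2\,V_n(P)$.

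Finally I would combine these two facts. If $\ga$ is an optimal set of $n$-means for $P$, then $\card(S_\go(\ga))=\card(\ga)\le n$ and, by the displayed identity applied with $\gb=S_\go(\ga)$, $V(P\circ S_\go^{-1};S_\go(\ga))=c_\go^2 V(P;\ga)=c_\go^2 V_n(P)=V_n(P\circ S_\go^{-1})$, so $S_\go(\ga)$ is optimal for $P\circ S_\go^{-1}$. Conversely, if $\gb$ is optimal for $P\circ S_\go^{-1}$, the same identity gives $V(P;S_\go^{-1}(\gb))=c_\go^{-2}V(P\circ S_\go^{-1};\gb)=c_\go^{-2}V_n(P\circ S_\go^{-1})=V_n(P)$, so $S_\go^{-1}(\gb)$ is optimal for $P$. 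There is no real obstacle here; the only point requiring a word of care is the legitimacy of the substitution $x=S_\go(y)$ in the integral (valid because $S_\go$ is a bijection onto its image and $P\circ S_\go^{-1}$ is the pushforward of $P$) and the observation that passing through $S_\go$ neither creates nor destroys quantizers, so the constraint $\card\le n$ is respected in both directions.
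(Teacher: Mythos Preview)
Your argument is correct and is exactly the standard similarity/change-of-variables computation the paper has in mind when it says the lemma ``is not difficult to see'' (the paper gives no explicit proof). The key identity $V(P\circ S_\go^{-1};\gb)=c_\go^{2}\,V(P;S_\go^{-1}(\gb))$ together with the cardinality-preserving bijection $\gb\leftrightarrow S_\go^{-1}(\gb)$ is precisely what is needed, and you have handled both directions cleanly.
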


\begin{lemma}
If $\ga_n(\gn)$ is an optimal set of $n$-means for $\gn , $ then, for any $\go \in I^k$, $k\geq 0$, $S_\go(\ga_n(\gn))$ is an optimal set of $n$-means for the measure $\gn\circ S_\go^{-1}$. Moreover,
\[\int_{L_\go}\min_{a\in S_\go(\ga_n(\gn))}(x-a)^2 dP=\frac{c^{2k}}{3^{k+1}}  V_n(\gn).\]
\end{lemma}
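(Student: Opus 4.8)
The plan is to deduce both assertions from the scaling behaviour of the similarities $S_\go$ together with the decomposition \eqref{eq1} of the condensation measure $P$. For the optimality statement I would first note that for $\go\in I^k$ the map $S_\go$ is a similarity of ratio $(\frac 15)^k$, so that $S_\go(y)-S_\go(b)=(\frac 15)^k(y-b)$; hence, for any finite $\gb\ci\D R$ of cardinality at most $n$, writing $\gb':=S_\go^{-1}(\gb)$ and applying the change of variables $x=S_\go(y)$,
\[\int\min_{a\in\gb}(x-a)^2\,d(\gn\circ S_\go^{-1})(x)=\int\min_{a\in\gb}(S_\go(y)-a)^2\,d\gn(y)=\frac 1{25^k}\int\min_{b\in\gb'}(y-b)^2\,d\gn(y).\]
Since $\gb\mapsto\gb'$ is a bijection of the family of subsets of $\D R$ of cardinality at most $n$ onto itself, this identity gives $V_n(\gn\circ S_\go^{-1})=\frac 1{25^k}V_n(\gn)$ and shows that $\gb$ is an optimal set of $n$-means for $\gn\circ S_\go^{-1}$ precisely when $\gb'$ is an optimal set of $n$-means for $\gn$. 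Taking $\gb=S_\go(\ga_n(\gn))$, so that $\gb'=\ga_n(\gn)$, yields the first claim; this is exactly the argument of Lemma~\ref{lemma4} with $P$ replaced by the self-similar measure $\gn$.

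For the integral I would identify the restriction of $P$ to $L_\go=S_\go(L)$. Using \eqref{eq1} together with the fact, established in the lemma above, that the sets $J_\gt$ with $|\gt|=n$ and the sets $L_\gt$ with $0\le|\gt|\le n-1$ are pairwise disjoint, only the summand $\frac 1{3^{k+1}}\gn\circ S_\go^{-1}$ of \eqref{eq1} carries mass on $L_\go$; consequently $P(B)=\frac 1{3^{k+1}}\,\gn\circ S_\go^{-1}(B)$ for every Borel $B\ci L_\go$ (the case $k=0$ being the identity $P|_L=\gn$). Hence
\[\int_{L_\go}\min_{a\in S_\go(\ga_n(\gn))}(x-a)^2\,dP(x)=\frac 1{3^{k+1}}\int\min_{a\in S_\go(\ga_n(\gn))}(x-a)^2\,d(\gn\circ S_\go^{-1})(x),\]
and by the change of variables above (with $\gb=S_\go(\ga_n(\gn))$, so $\gb'=\ga_n(\gn)$) the integral on the right equals $\frac 1{25^k}V_n(\gn)$. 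Therefore the whole expression equals $\frac 1{3^{k+1}}\cdot\frac 1{25^k}V_n(\gn)=\frac 1{75^k}\cdot\frac 13 V_n(\gn)$, as claimed.

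Both ingredients are elementary; the step that requires the most care is the localization claim that, among all the pieces occurring in \eqref{eq1}, only $\frac 1{3^{k+1}}\gn\circ S_\go^{-1}$ meets $L_\go$. I would handle this via the nesting relations $J_{\gt1}\uu L_\gt\uu J_{\gt2}\ci J_\gt$ and the disjointness of the intervals $L_{\gt1},L_{\gt2},L_\gt$ recalled in the lemma above, which together yield the full disjointness of the relevant partition of $K$; once that is in place the remaining computation is a routine change of variables.
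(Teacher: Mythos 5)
Your proposal is correct and follows essentially the same route as the paper's proof: the optimality claim is reduced to the scaling property of $S_\go$ (the paper cites Lemma~\ref{lemma4} for this), and the integral is computed by localizing $P$ on $L_\go$ to $\frac{1}{3^{k+1}}\gn\circ S_\go^{-1}$ via \eqref{eq1} and then changing variables to pull out the factor $\frac{1}{25^k}$. Your only addition is to spell out explicitly the disjointness argument behind the localization step, which the paper leaves implicit.
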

\begin{proof}Let $\ga_n(\gn)$ be an optimal set of $n$-means for $\gn$. Then, $S_\go(\ga_n(\gn))$ is an optimal set of $n$-means for the image measure $\gn\circ S_\go^{-1}$ follows from Lemma~\ref{lemma4}. Now, using \eqref{eq1} and Proposition~\ref{prop0},
\begin{align*}
&\int_{L_\go}\min_{a\in S_\go(\ga_n(\gn))}(x-a)^2 dP=\frac {1}{3^{k+1}} \int_{L_\go}\min_{a\in S_\go(\ga_n(\gn))}(x-a)^2 d(\gn\circ S_\go^{-1})\\
&=\frac {1}{3^{k+1}} \int_{L}\min_{a\in S_\go(\ga_n(\gn))}(S_\go(x)-a)^2 d\gn=\frac {1}{3^{k+1}}\ c^{2k} \int_{L}\min_{a\in \ga_n(\gn)}(x-a)^2 d\gn=
\frac{c^{2k}}{3^{k+1}} V_n(\gn),
\end{align*}
which completes the proof of the lemma.
\end{proof}

Next, we will determine the optimal sets of 2 and 3 means which will provide the base needed to determine the optimal sets of $F(n)$-means and the $F(n)$th quantization errors for a canonical sequence $\set{F(n)}_{n\geq 1}$.

\begin{prop} \label{prop1}
Let $\ga:=\set{a_1, a_2}$ be an optimal set of two-means with $a_1<a_2$. Then, $a_1=\frac{2}{3} \Big(S_1 (\frac{1}{2})+\frac{1}{2} T_1 (\frac{1}{2}) \Big), $ $a_2=\frac{2}{3} \Big(S_2 (\frac{1}{2})+\frac{1}{2} T_2 (\frac{1}{2})\Big), $ and the corresponding quantization error is
$$V_2=2 \Big( \int_{J_1} (x-a_1)^2 dP + \int_{L_1} (x-a_1)^2 dP  \Big). $$
\end{prop}
\begin{proof}
Due to symmetry of the condensation measure $P$ with respect to the midpoint $\frac 12$, we can assume that if $\set{a_1, a_2}$ is an optimal set of two-means with $a_1<a_2$, then $a_1=E(X : X \in [0, \frac 12])$ and $a_2=E(X : X \in [\frac 12, 1])$. Since $P(J_1\uu T_1(L))=P(J_1)+P(T_1(L))=\frac 13+\frac 13\gn(T_1(L))=\frac 13+\frac 16=\frac 12$, we have
\begin{align*}
a_1&=E(X : X \in J_1\uu T_1(L))=\frac{1}{P(J_1\uu T_1(L))}\Big(\int_{J_1} x dP+\int_{T_1(L)}x dP\Big)\\
&=2\Big(\frac 13 S_1(\frac 12) +\frac 13\int_{T_1(L)} x d\gn\Big)=\frac{2}{3} \Big( S_1(\frac 12) +\frac 12 T_1(\frac 12)\Big).
\end{align*}
Similarly, $a_2=\frac{2}{3} \Big( S_2(\frac 12) +\frac 12 T_2(\frac 12)\Big)=1-a_1.$
The  corresponding quantization error is given by
$$ V_2 =\int\min_{a\in \ga} (x-a)^2 dP=2\Big(\int_{J_1}(x-a_1)^2 dP+\int_{T_1(L)} (x-a_1)^2 dP\Big)  ,  $$
which completes the proof.
\end{proof}

It should be observed that, by symmetry, in the proposition above we also have
$$V_2 =\int\min_{a\in \ga} (x-a)^2 dP=2\Big(\int_{J_2}(x-a_2)^2 dP+\int_{T_2(L)} (x-a_2)^2 dP\Big) . $$

The proof of the following lemma is straightforward.

\begin{lemma} \label{lemma0} Let $\go \in I^k$ for $k\geq 0$. Then,
\[\int_{J_{\go 1}\uu S_{\go}[a, \frac 12]}\Big(x-S_{\go}(a_1) \Big)^2 dP=\frac {c^{2k}}{3^{k}}  \frac 12 V_2=\int_{S_{\go}[\frac 12, b]\uu J_{\go 2}}\Big(x-S_{\go}(a_2)\Big)^2 dP.\]
\end{lemma}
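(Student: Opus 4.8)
The plan is to exploit the self-similarity of $P$ to reduce the displayed integrals over the sets $J_{\go 1}\uu S_\go[\frac 25,\frac 12]$ and $S_\go[\frac 12,\frac 35]\uu J_{\go 2}$ (which are the $S_\go$-images of $J_1\uu[\frac 25,\frac 12]$ and $[\frac 12,\frac 35]\uu J_2$) to the corresponding integrals computed in the proof of Proposition~\ref{prop1}. First I would observe that the two claimed equalities are mirror images of each other under the symmetry of $P$ about $\frac 12$ (composed with $S_\go$), so it suffices to prove the first one. Next, I would write, using Lemma~\ref{lemma1} applied with $n=k$ — or more simply, directly from the change-of-variables identities $\int_{J_\go}(g\circ S_\go^{-1})\,dP = \frac{1}{3^k}\int_J g\,dP$ and $\int_{L_\go}(g\circ S_\go^{-1})\,dP = \frac{1}{3^{k+1}}\int_L g\,dP = \frac{1}{3^{k+1}}\int g\,d\gn$ — that for any Borel $g\ge 0$,
\[
\int_{J_{\go 1}\uu S_\go[\frac 25,\frac 12]} (g\circ S_\go^{-1})\,dP
= \frac{1}{3^k}\Big(\int_{J_1} g\,dP + \tfrac12\int_{T_1(L)} g\,d\gn\Big),
\]
since $S_\go[\frac 25,\frac 12]$ meets the condensation part only in the sub-copy $L_\go$ of weight $\frac{1}{3^{k+1}}$, of which exactly the left half $T_1(L)$ (weight $\frac12$ in $\gn$) lies in $[\frac 25,\frac 12]$ after rescaling. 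The scaling factor $\frac{1}{25^k}$ on lengths then turns a $(x-a)^2$ integrand into $\frac{1}{25^k}(y-S_\go^{-1}(a))^2$, producing the overall constant $\frac{1}{3^k}\cdot\frac{1}{25^k} = \frac{1}{75^k}$.

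Concretely, substituting $g(y)=(y-\tfrac{19}{90})^2$ and $a=S_\go(\tfrac{19}{90})$, so that $S_\go^{-1}(a)=\tfrac{19}{90}$, gives
\[
\int_{J_{\go 1}\uu S_\go[\frac 25,\frac 12]} (x-S_\go(\tfrac{19}{90}))^2\,dP
= \frac{1}{75^k}\Big(\int_{J_1}(y-\tfrac{19}{90})^2\,dP + \int_{T_1(L)}(y-\tfrac{19}{90})^2\,dP\Big).
\]
But the parenthesized quantity is exactly $\frac12 V_2$: in the proof of Proposition~\ref{prop1} it was shown that $V_2 = 2\big(\int_{J_1}(x-\tfrac{19}{90})^2\,dP + \int_{T_1(L)}(x-\tfrac{19}{90})^2\,dP\big)$, using that $a_1=\tfrac{19}{90}$ is the centroid of $J_1\uu T_1(L)$ which has $P$-measure $\tfrac12$, and that the optimal Voronoi region of $a_1$ is $[0,\tfrac12]=J_1\uu S_\es[\frac 25,\frac 12]\uu(\text{a }P\text{-null remainder})$. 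This yields the left-hand equality; the right-hand one follows by applying the same computation to the mirror copy, or by noting that $S_\go$ conjugates the symmetry of $P$ about $\frac12$ to the symmetry about $S_\go(\frac12)$, which swaps $J_{\go1}\uu S_\go[\frac25,\frac12]$ with $S_\go[\frac12,\frac35]\uu J_{\go2}$ and $S_\go(\frac{19}{90})$ with $S_\go(\frac{71}{90})$.

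I expect no genuine obstacle here — this is the "straightforward" bookkeeping lemma the authors flag — but the one point demanding care is the set-theoretic identification: checking that $S_\go[\frac25,\frac12]$ intersected with $\UU_{k'\le k-1}\UU_{|\go'|=k'}L_{\go'}$ is $P$-a.e. equal to $T_{\go 1}(L)=S_\go(T_1(L))$ (with nothing from shorter words and with the right half $T_{\go2}(L)$ correctly excluded), so that the restricted integral picks up precisely the weight $\tfrac{1}{3^{k+1}}\cdot\tfrac12$ from the condensation term and $\tfrac{1}{3^k}$ from $J_{\go 1}$. This follows from the nesting $L_{\go'}\ci J_{\go'}$ and the fact that $T_1(L)\ci[\frac25,\frac12]$ while $T_2(L)\ci[\frac12,\frac35]$, together with the $P$-measure computation done in the earlier lemma showing the union of all the $J$-cylinders and $L$-cylinders exhausts $P$.
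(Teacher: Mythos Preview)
Your approach is correct and is precisely the ``straightforward'' scaling reduction the paper has in mind (the paper omits the proof entirely): use $P|_{J_\go}=\tfrac{1}{3^k}\,P\circ S_\go^{-1}$ to pull both integrals back to the case $k=0$, where they are exactly the two halves of the $V_2$ computation in Proposition~\ref{prop1}, and invoke the symmetry of $P$ about $\tfrac12$ for the second equality.

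One minor bookkeeping slip to fix: in your first displayed change-of-variables identity the coefficient $\tfrac12$ in front of $\int_{T_1(L)}g\,d\gn$ should be $\tfrac13$ --- it comes from $P|_L=\tfrac13\gn$, not from $\gn(T_1(L))=\tfrac12$ (the latter is already absorbed by restricting the domain to $T_1(L)$). Equivalently, write that term as $\int_{T_1(L)}g\,dP$. The same issue makes the middle expression in your preliminary identity $\int_{L_\go}(g\circ S_\go^{-1})\,dP = \tfrac{1}{3^{k+1}}\int_L g\,dP = \tfrac{1}{3^{k+1}}\int g\,d\gn$ off by a factor of $3$ (the outer equality is right; the middle term should be $\tfrac{1}{3^{k}}\int_L g\,dP$). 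Your second, concrete display already has the correct coefficients with $dP$ throughout, so the conclusion is unaffected.
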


From the above lemma we deduce the following corollary.
\begin{cor} \label{cor1}
Let $\go \in I^k$ for  $k\geq 0$. Then, for any $a\in \D R$,
\begin{align*}
&\int_{J_{\go 1}\uu S_{\go}(T_1(L))}(x-a)^2 dP=\frac 1{3^k}\Big(\frac{c^{2k}}{2} V_2+\frac 12(S_\go(a_1)-a)^2\Big), \te{ and } \\
&\int_{S_{\go}(T_2(L))\uu J_{\go 2}}(x-a)^2 dP=\frac 1{3^k}\Big(\frac{c^{2k}}{2} V_2+\frac 12(S_\go(a_2)-a)^2\Big).
\end{align*}
\end{cor}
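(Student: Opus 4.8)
The plan is to derive Corollary~\ref{cor1} directly from Lemma~\ref{lemma0} by splitting off a constant-shift term, exactly as equations~\eqref{eq4} split off the squared distance between a centroid and an arbitrary point. First I would fix $\go\in I^k$ and recall from the \texttt{Note} following Lemma~\ref{lemma3} and from the computation in Proposition~\ref{prop1} that $S_\go(\frac{19}{90})$ is the conditional expectation $E(X : X \in J_{\go1}\uu S_\go(T_1(L)))$ (this is the image under $S_\go$ of the centroid $a_1=\frac{19}{90}$ of the corresponding left-half region for the base measure), and that $P(J_{\go1}\uu S_\go(T_1(L)))=\frac{1}{3^k}\cdot\frac12$. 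Then I would invoke the standard variance decomposition: for any Borel set $A$ with $P(A)>0$ and any $a\in\D R$,
\[
\int_A (x-a)^2\,dP = \int_A (x-c)^2\,dP + P(A)\,(c-a)^2,
\]
where $c=E(X: X\in A)$, since the cross term $2(c-a)\int_A (x-c)\,dP$ vanishes by the definition of the conditional expectation $c$.

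Applying this with $A=J_{\go1}\uu S_\go(T_1(L))$, $c=S_\go(\frac{19}{90})$, and using $S_\go(T_1(L))=S_\go[\frac25,\frac12]$ together with the first identity of Lemma~\ref{lemma0}, namely $\int_A (x-c)^2\,dP=\frac{1}{75^k}\cdot\frac12 V_2$, and the mass $P(A)=\frac{1}{3^k}\cdot\frac12$, I would get
\[
\int_{J_{\go1}\uu S_\go(T_1(L))} (x-a)^2\,dP = \frac{1}{75^k}\frac12 V_2 + \frac{1}{3^k}\frac12\bigl(S_\go(\tfrac{19}{90})-a\bigr)^2,
\]
and since $\frac{1}{75^k}=\frac{1}{3^k}\cdot\frac{1}{25^k}$, this is precisely the first claimed formula. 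The second formula follows by the mirror-image argument, using the second half of Lemma~\ref{lemma0}, the symmetry of $P$ about $\frac12$, and the fact that $S_\go(\frac{71}{90})=E(X:X\in S_\go[\frac12,\frac35]\uu J_{\go2})$ with $S_\go(T_2(L))=S_\go[\frac12,\frac35]$.

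The only point requiring any care — and the mildest of obstacles — is verifying that $S_\go(\frac{19}{90})$ is genuinely the conditional expectation over $A=J_{\go1}\uu S_\go(T_1(L))$, so that the cross term really does vanish; this reduces, via $S_\go$ being affine and Lemma~\ref{lemma4}, to the case $k=0$ already established inside the proof of Proposition~\ref{prop1}, where $a_1=E(X:X\in J_1\uu T_1(L))=\frac{19}{90}$. Alternatively one can bypass the conditional-expectation remark entirely and prove the corollary by brute force: expand $(x-a)^2=(x-S_\go(\frac{19}{90}))^2+2(S_\go(\frac{19}{90})-a)(x-S_\go(\frac{19}{90}))+(S_\go(\frac{19}{90})-a)^2$, integrate termwise over $A$, use Lemma~\ref{lemma0} for the first term, use $\int_A x\,dP = S_\go(\frac{19}{90})P(A)$ (which is the content of the centroid computation) to kill the middle term, and compute $P(A)=\frac{1}{2\cdot 3^k}$ for the last term. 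Either way the proof is a two-line application of a standard identity, so I would keep it short.
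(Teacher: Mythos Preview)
Your proof is correct and follows exactly the route the paper has in mind: the paper simply states that the corollary follows from Lemma~\ref{lemma0}, and your variance-decomposition argument (shifting from the centroid $S_\go(\tfrac{19}{90})$ to an arbitrary $a$) is precisely the standard way to extract it. One tiny imprecision worth fixing: $T_1(L)=[\tfrac{2}{5},\tfrac{7}{15}]$ is a proper subset of $[\tfrac{2}{5},\tfrac{1}{2}]$, not equal to it, but since $P$ assigns no mass to the gap $(\tfrac{7}{15},\tfrac{1}{2})$ the integrals over $S_\go(T_1(L))$ and $S_\go[\tfrac{2}{5},\tfrac{1}{2}]$ coincide, so this does not affect the argument.
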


\medskip

\begin{prop}\label{prop2}
Let $\ga:=\set{a_1, a_2, a_3}$ be an optimal set of three-means with $a_1<a_2<a_3$. Then, $a_1=S_1(\frac 12)=\frac {c}{2}$, $a_2=\frac 12$, and $a_3=S_2(\frac 12)=1- \frac {c}{2}$. The corresponding quantization error is $V_3=\frac{1}{3} (2c^2 V +W ) . $
\end{prop}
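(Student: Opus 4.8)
The plan is to follow the standard two-step approach for such problems: first exhibit a candidate three-point set $\gb$, compute $V(P;\gb)$ to bound $V_3$ from above, and then show that every optimal set of three-means must coincide with $\gb$ by pinning down its three points through the centroid condition of Proposition~\ref{prop10}(iii). (Note that $V_3\le\frac{203}{43800}<V_2$, so any optimal three-means set has exactly three elements, justifying the hypothesis.) For the upper bound I would take $\gb:=\set{S_1(\frac 12),\,\frac 12,\,S_2(\frac 12)}=\set{\frac 1{10},\frac 12,\frac 9{10}}$. Its consecutive midpoints are $\frac 3{10}$ and $\frac 7{10}$, and since $J_1=[0,\frac 15]\sci[0,\frac 3{10}]$, $L=[\frac 25,\frac 35]\sci[\frac 3{10},\frac 7{10}]$, and $J_2=[\frac 45,1]\sci[\frac 7{10},1]$, the three Voronoi regions of $\gb$ separate $J_1,L,J_2$. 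Applying \eqref{eq4} (with $k=1,\go=1$ and $k=1,\go=2$ for the outer two integrals, and $k=0,\go=\es$ for the middle one), together with $V(P)=\frac{65}{584}$ (Lemma~\ref{lemma3}) and $W=\frac 1{200}$ (Lemma~\ref{lemma2}), gives $V(P;\gb)=\frac{65}{43800}+\frac{73}{43800}+\frac{65}{43800}=\frac{203}{43800}$, hence $V_3\le\frac{203}{43800}$.

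Now let $\ga=\set{a_1,a_2,a_3}$ with $a_1<a_2<a_3$ be optimal, and put $m_1:=\frac{a_1+a_2}2$, $m_2:=\frac{a_2+a_3}2$ for the Voronoi boundaries; since each $a_i$ is a conditional expectation of $X$, we have $0\le a_1$ and $a_3\le 1$. The two identities I would use throughout (both instances of \eqref{eq4}) are $\int_{J_i}(x-a)^2\,dP=\frac{65}{43800}+\frac 13(a-S_i(\frac 12))^2$ and $\int_{L}(x-a)^2\,dP=\frac 1{600}+\frac 13(a-\frac 12)^2$, together with the geometric facts that $K$ has no points in $(\frac 15,\frac 25)\uu(\frac 35,\frac 45)$ and that $K\ii[\frac 25,\frac 35]=C$. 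First, if $a_1>\frac 15$ then $m_1>\frac 15$ (as $a_2>a_1$), so $J_1\sci M(a_1|\ga)$ and hence $V(P;\ga)\ge\int_{J_1}(x-a_1)^2\,dP>\frac{65}{43800}+\frac 13\cdot\frac 1{100}=\frac{211}{43800}>V_3$, a contradiction; thus $a_1\le\frac 15$, and by the mirror argument $a_3\ge\frac 45$.

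Next I would establish $\frac 15\le m_1\le\frac 25$ and $\frac 35\le m_2\le\frac 45$. If $m_1>\frac 25$, then $a_1\le\frac 15$ forces $a_2>\frac 35$, hence $m_2>\frac 35$, so the mass of $L$ (carried by $C\sci[\frac 25,\frac 35]$) is shared only between $M(a_1|\ga)$ and $M(a_2|\ga)$ with $a_1\le\frac 15\le\frac 25$ and $a_2\ge\frac 35$; the pointwise bound $\min\set{(x-a_1)^2,(x-a_2)^2}\ge\min\set{(x-\frac 15)^2,(x-\frac 35)^2}=(x-\frac 35)^2$, valid on $C$, then gives $V(P;\ga)\ge\int_{C}(x-\frac 35)^2\,dP=\frac 13(\frac 1{100}+\frac 1{200})=\frac 1{200}=\frac{219}{43800}>V_3$, impossible. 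So $m_1\le\frac 25$, and symmetrically $m_2\ge\frac 35$. If instead $m_1<\frac 15$, then $a_2=2m_1-a_1\le 2m_1<\frac 25$, and since $m_2\ge\frac 35$ we get $C\sci M(a_2|\ga)$, so $V(P;\ga)\ge\int_{L}(x-a_2)^2\,dP=\frac 1{600}+\frac 13(a_2-\frac 12)^2>\frac 1{600}+\frac 13\cdot\frac 1{100}=\frac{219}{43800}>V_3$, impossible; hence $m_1\ge\frac 15$, and symmetrically $m_2\le\frac 45$. With $\frac 15\le m_1\le\frac 25$ and $\frac 35\le m_2\le\frac 45$, emptiness of $K$ on $(\frac 15,\frac 25)\uu(\frac 35,\frac 45)$ forces (up to $P$-null sets) $M(a_1|\ga)\ii K=J_1\ii K$, $M(a_2|\ga)\ii K=C$, $M(a_3|\ga)\ii K=J_2\ii K$, so Proposition~\ref{prop10}(iii) yields $a_1=E(X:X\in J_1)=S_1(\frac 12)=\frac 1{10}$, $a_2=E(\gn)=\frac 12$, $a_3=S_2(\frac 12)=\frac 9{10}$; thus $\ga=\gb$ and $V_3=V(P;\gb)=\frac{203}{43800}$.

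The distortion computations are routine; the delicate part is the case analysis above, i.e.\ verifying that any allocation in which a code point straddles one of the gaps $(\frac 15,\frac 25)$, $(\frac 35,\frac 45)$ or sits on the wrong side of the central lump $L$ already costs at least $\frac 1{200}=\frac{219}{43800}>\frac{203}{43800}$. The organizational point is to split into sub-cases so that each of the three mass-$\frac 13$ blocks $J_1$, $L$, $J_2$ is assigned, up to a $P$-null set, to a single Voronoi region, and to check the pointwise inequality on $C$ (and its reflection about $\frac 12$) used to rule out $m_1>\frac 25$.
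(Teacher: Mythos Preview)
Your proof is correct and follows essentially the same approach as the paper: exhibit the candidate $\gb=\{\frac1{10},\frac12,\frac9{10}\}$ to get $V_3\le\frac{203}{43800}$, then rule out all competing configurations via single-block distortion lower bounds (each exceeding $\frac1{200}$) and finish with the centroid condition. The only cosmetic difference is that you work directly with the Voronoi midpoints $m_1,m_2$, whereas the paper first pins down $a_2\in(\frac25,\frac35)$ and then deduces the midpoint constraints; the key inequalities and the logic are the same.
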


\begin{proof}  Since both $\nu $ and $P$ are symmetric uniform measures with support $K, $ we can assume that $a_1 \in J_1, \ a_2 \in L $ and $a_3=1-a_1. $  Hence, by (3), the quantization error due to this set of three points $\gb:=\set{a_1, a_2, a_3}$
is
$$ \aligned & \int\min_{a\in \gb} (x-a)^2 dP=\int_{J_1}(x-a_1)^2 dP+\int_{L}(x-a_2)^2 dP+\int_{J_2}(x-1+a_1)^2 dP\\
&=\frac 13 \Big[ c^2 V +(S_1(\frac 12)-a_1)^2 \Big]
+\frac 13 \Big[W + (\frac 12 -a_2)^2 \Big]
+\frac 13 \Big[c^2 V +(S_2(\frac 12)-1+a_1)^2 \Big]\\
& = \frac 13 \Big[2 c^2 V +W + 2 (\frac{c}{2} -a_1)^2 +(\frac 12 -a_2)^2 \Big]\\
& = \frac 13 \Big(2 c^2 V +W+ \frac{c^2}{2}+\frac{1}{4} \Big)
+\frac 13 \Big(a_1^2 +a_2^2 -c a_1 -a_2 \Big). \endaligned
$$
The function $f(a_1, a_2)=a_1^2 +a_2^2 -c a_1 -a_2 $ attains its minimum at $a_1=\frac{c}{2} $ and $a_2=\frac 12 $ with minimum value $-\frac{1}{3}(\frac{c^2}{2}+\frac{1}{4}). $  Therefore, since $V_3$ is the quantization error for three-means, we have $V_3 =  \frac 13 (2 c^2 V +W) . $
\end{proof}


\begin{prop} \label{prop00021}
Let $\ga$ be an optimal set of $n$-means for $n\geq 3$ such that $\ga\ii J_1\neq \es$, $\ga\ii J_2\neq \es$, and $\ga\ii L\neq \es$. Then, for $i=1, 2$,  the Voronoi region of any point in $\ga\ii J_i$ does not contain any point from $L$, and the Voronoi region of any point in $\ga\ii L$  does not contain any point from $\ga\ii J_i$.
\end{prop}

\begin{proof}
Let $\ga:=\set{a_1, a_2, \cdots, a_n}$ be an optimal set of $n$-means for $n\geq 3$ such that $0<a_1<a_2<\cdots<a_n<1$. Let $j=\max\set {1\leq i\leq n : a_i\in \ga\ii J_1}$. Then, by the hypothesis, $a_j\leq  c$. Suppose that the Voronoi region of $a_j$ contains points from $L$. Then, $\frac 12 (a_j+a_{j+1})> a$ implying that
$$a_{j+1}> 2a-a_j\geq  2a-c=b . $$
This leads to a contradiction because $\ga\ii L\neq\es$. Hence, the Voronoi region of any point in $\ga\ii J_1$ does not contain any point from $L$.  The rest of the statements are proved similarly.
\end{proof}

\bigskip

\subsection{Set-up for optimal sets of $n$-means for $n\geq 4$} \label{subfirst2}

As observed in the previous subsection, the interaction of the measures  $\nu $ and $P $ leads to rather intricate arguments.  In order to bring some transparency to the arguments and reveal the connections between the quantization dimension $D(\nu)$ and the critical value $\kappa $ we will proceed by considering some special cases of the measures $P $ and $\nu. $

Throughout the rest of the article we will assume that $c=\frac{1}{5}. $  Consequently, $a=\frac{2}{5}, \ b=\frac{3}{5} $ and
the critical value will be $\kappa =\frac{2 \log 2}{\log 75 - \log 2}. $  Three different
cases of the self-similar measure $\nu , $ which will be determined by the values below, are considered.

\begin{itemize}
\item[(i)]  $s=\frac{1}{3}, $ which implies $D(\nu) =\frac{\log 2}{\log 3} > \kappa , $
\item[(ii)]  $s=\frac{1}{7}, $ which implies $D(\nu) =\frac{\log 2}{\log 7} < \kappa , $ and
\item[(iii)]  $s=\frac{\sqrt{6}}{15}, $ which implies $D(\nu) =\frac{2 \log 2}{\log 75 - \log 2} = \kappa . $
\end{itemize}
In order to provide further insight to the question posed at the end of the article, we will also consider $s=\frac{1}{5}, $ which implies $D(\nu) =\frac{\log 2}{\log 5} > \kappa . $
In each case, we will construct the canonical sequences to investigate the quantization for the associated condensation measures while exhibiting the optimal sets of $n$-means for $n\geq 4 $ and determining the quantization dimensions.

\bigskip

\section{Condensation measure $P$ with self-similar measure $\gn$ satisfying $D(\gn)>\gk$}\label{second}

In this case $s=\frac{1}{3}; $ hence, from the general results obtained in the previous section, we have

\begin{itemize}

\item $E(\nu)=\frac12,\ W=V(\nu)=\frac{1}{200};\ E(P)=\frac12, \ V=V(P)=\frac{65}{584}, $
\item $\alpha_1=\{\frac12 \}, $  with $V_1=V(P), $
\item $\alpha_2 =\{\frac{19}{90}, \frac{71}{90} \} $ with $V_2=\frac{32929}{1182600}, $ and
\item $ \alpha_3 =\{\frac{1}{10}, \frac{1}{2}, \frac{9}{10} \} $ with $V_3=\frac{203}{43800}. $

\end{itemize}

\subsection{Essential lemmas and propositions}


\begin{lemma} \label{lemma0001}  Let $\gb:=\set{c, 1}$, where $0<c<1$. Then,
$\int\mathop{\min}\limits_{a\in\gb}(x-a)^2 dP=\frac{1517}{43800},$
and the minimum occurs when $c=\frac 3{10}$.
\end{lemma}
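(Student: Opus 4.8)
Let $\gb:=\set{c, 1}$, where $0<c<1$. Then $\int\mathop{\min}\limits_{a\in\gb}(x-a)^2 dP=\frac{1517}{43800}=0.0346347$, and the minimum occurs when $c=\frac 3{10}$.

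The plan is to set $f(c):=\int\mathop{\min}\limits_{a\in\gb}(x-a)^2\,dP$ with $\gb=\set{c,1}$, and to prove $f(c)\ge\frac{1517}{43800}$ for every $c\in(0,1)$, with equality precisely at $c=\frac3{10}$. Since $P$ is non-atomic and is supported in $J=[0,1]$, the Voronoi partition determined by $\gb$ splits at the midpoint $m:=\frac{c+1}{2}\in(\frac12,1)$, so that
\[
f(c)=\int_{[0,m]}(x-c)^2\,dP+\int_{(m,1]}(x-1)^2\,dP .
\]
First I would pin down the value at the candidate point: for $c=\frac3{10}$ one has $m=\frac{13}{20}\in(\frac35,\frac45)$, which lies in the gap between $L$ and $J_2$, so $[0,m]$ and $(m,1]$ carry exactly the $P$-mass of $J_1\uu L$ and of $J_2$. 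Splitting the first integral over $J_1$ and $L$ and applying \eqref{eq4} (with $\go=\es$ and $\go=1$) gives $\int_{J_1}(x-\frac3{10})^2\,dP=\frac{649}{43800}$, $\int_{L}(x-\frac3{10})^2\,dP=\frac{657}{43800}$, and $\int_{J_2}(x-1)^2\,dP=\frac{211}{43800}$, hence $f(\frac3{10})=\frac{1517}{43800}$.

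Next I would handle the main range $c\in[\frac15,\frac35]$, where $m=\frac{c+1}{2}\in[\frac35,\frac45]$. Because $K\ci J_1\uu L\uu J_2$ with $J_1=[0,\frac15]$, $L=[\frac25,\frac35]$, $J_2=[\frac45,1]$, for such $m$ the set $[0,m]$ again carries the $P$-mass of $J_1\uu L$ and $(m,1]$ that of $J_2$, so $f(c)=\int_{J_1\uu L}(x-c)^2\,dP+\int_{J_2}(x-1)^2\,dP$. From $\int_{J_1}x\,dP=\frac1{30}$ and $\int_{L}x\,dP=\frac16$ we get $\int_{J_1\uu L}x\,dP=\frac15$, i.e.\ $E(X:X\in J_1\uu L)=\frac3{10}$, and then the parallel-axis identity gives $\int_{J_1\uu L}(x-c)^2\,dP=\int_{J_1\uu L}(x-\frac3{10})^2\,dP+\frac23(c-\frac3{10})^2$. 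Therefore $f(c)=f(\frac3{10})+\frac23(c-\frac3{10})^2\ge\frac{1517}{43800}$, with equality exactly when $c=\frac3{10}$.

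It remains to rule out $c$ outside $[\frac15,\frac35]$. If $c>\frac35$ then $J_1\ci[0,m]$ and $(c-\frac1{10})^2>\frac14$, so by \eqref{eq4}, $f(c)\ge\int_{J_1}(x-c)^2\,dP=\frac13\big(\frac{V}{25}+(c-\frac1{10})^2\big)>\frac{3715}{43800}$. If $c<\frac15$ then $m\in(\frac12,\frac35)$, and $[0,m]$ meets $K$ in $(K\ii J_1)\uu(C\ii[\frac25,m])$ while $(m,1]$ meets $K$ in $(C\ii(m,\frac35])\uu J_2$, where $C\ci L=[\frac25,\frac35]$. The key point is that for every $x\in C$ and $c\le\frac15$ one has both $(x-c)^2\ge(x-\frac15)^2$ and $(x-1)^2\ge(x-\frac15)^2$ (the latter because $\frac35$ is the midpoint of $\frac15$ and $1$ and $x\le\frac35$); hence the two ``$C$-pieces'' together contribute at least $\int_{C}(x-\frac15)^2\,dP=\frac13\big(W+(\frac3{10})^2\big)=\frac{1387}{43800}$, independently of where the cut $m$ falls. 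Adding $\int_{J_1}(x-c)^2\,dP\ge\frac{V}{75}=\frac{65}{43800}$ and $\int_{J_2}(x-1)^2\,dP=\frac{211}{43800}$ yields $f(c)\ge\frac{1663}{43800}>\frac{1517}{43800}$, which with the previous two cases proves the lemma. The only genuinely delicate case is $c<\frac15$: there the Voronoi boundary $m$ falls \emph{inside} the condensation interval $L$ and slices the self-similar set $C$ in a $c$-dependent, fractal way between the two quantizers, and the uniform pointwise estimate $(x-c)^2,(x-1)^2\ge(x-\frac15)^2$ on $C$ is exactly what removes this dependence; one also has to remember that $P$ is non-atomic, so the cut point carries no mass and the endpoint overlaps between the three cases are harmless.
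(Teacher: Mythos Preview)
Your proof is correct and follows essentially the same approach as the paper: compute the distortion at the candidate $c=\tfrac3{10}$, then eliminate all other values of $c$ by lower bounds that exploit the gap structure $K\ci J_1\uu L\uu J_2$. Your case organization is somewhat cleaner---handling the whole main range $c\in[\tfrac15,\tfrac35]$ at once via the parallel-axis identity (rather than the paper's separate subcases $a\ge\tfrac12$ and $\tfrac25\le a<\tfrac12$), and making explicit the pointwise estimate $(x-c)^2,(x-1)^2\ge(x-\tfrac15)^2$ on $C$ that justifies the bound $\tfrac{1663}{43800}$ for $c<\tfrac15$, which the paper asserts without spelling out.
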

\begin{proof}
Since $\frac 35<\frac 12(\frac 3{10}+1)=\frac{13}{20}<\frac 45$, the distortion error due to the set $\gb:=\set{\frac 3{10}, 1}$ is
\[\int\min_{a\in\gb}(x-a)^2 dP=\int_{J_1\uu L}(x-\frac 3{10})^2 dP+\int_{J_2}(x-1)^2 dP=\frac{1517}{43800}.\]
Let $\ga:=\set{a, 1}$ be an optimal set of two-means for which the minimum in the hypothesis occurs, and $\tilde V_2$ is the corresponding quantization error. Then, $\tilde V_2\leq  \frac{1517}{43800} . $ Suppose that $a\leq \frac 15$. Then, since $\frac 12(\frac 15+1)=\frac 35$, we have the distortion error as
\[\int_{J_1}(x-S_1(\frac 12))^2 dP+\int_{L}(x-\frac 15)^2 dP+\int_{J_2}(x-1)^2 dP=\frac{1663}{43800}>\tilde V_2,\]
which leads to a contradiction. So, we can assume that $\frac 15<a$. If $a\geq \frac 12$, then
\[\tilde V_2\geq \int_{J_1}(x-\frac 12)^2 dP+\int_{T_1(L)}(x-\frac 12)^2 dP=\frac{65}{1168}>\tilde V_2,\]
which is a contradiction.  Next, if $\frac 25\leq a<\frac 12 , $ then, as $\frac 35<\frac 12(\frac 12+1)=\frac 34<\frac 45$, we have
\[\tilde V_2\geq  \int_{J_1}(x-\frac 25)^2 dP+\int_{T_2(L)}(x-\frac 12)^2 dP+\int_{J_2}(x-1)^2 dP=\frac{3253}{87600}\geq \tilde V_2,\]
which is also a contradiction. So, we can assume that $\frac 15< a< \frac 25$, and then notice that $\frac 12(\frac 15+1)=\frac 35<\frac 12(a+1)<\frac 12(\frac 25+1)<\frac 45$ yielding the fact that
$a=E(X : X \in J_1\uu L)=\frac{1}{P(J_1\uu L)}(P(J_1) S_1(\frac 12)+P(L) \frac 12)=\frac 12(\frac 1{10}+\frac 12)=\frac 3{10}$, and the corresponding quantization error is $\tilde V_2=\frac{1517}{43800}$.
\end{proof}
\begin{cor} \label{cor2} Let $\gb:=\set{c, \frac 15}$, where $0<c<\frac 15$. Then,
$\int_{J_1}\mathop{\min}\limits_{a\in\gb}(x-a)^2 dP=\frac{1517}{3285000},$
and the minimum occurs when $c=\frac 3{50}$.

\end{cor}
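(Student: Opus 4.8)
The plan is to deduce Corollary~\ref{cor2} from Lemma~\ref{lemma0001} by exploiting the self-similarity of $P$ on the first cylinder $J_1=S_1(J)=[0,\frac 15]$. The key structural observation is that, among the three pieces $S_1(K)$, $S_2(K)$, $C$ making up the attractor $K$, only $S_1(K)$ meets $J_1$: indeed $S_2(K)\sci J_2=[\frac 45,1]$ and $C\sci L=[\frac 25,\frac 35]$ are both disjoint from $J_1$. Hence, reading off \eqref{eq1} with $n=1$, the restriction of $P$ to $J_1$ coincides with $\frac 13\,P\circ S_1^{-1}$ as a (sub-probability) measure. Consequently, for any Borel measurable $f\ge 0$,
\[\int_{J_1} f\,dP=\frac 13\int f\,d(P\circ S_1^{-1})=\frac 13\int (f\circ S_1)\,dP.\]

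Next I would apply this with $f(x)=\min_{a\in\gb}(x-a)^2$ for $\gb=\set{c,\frac 15}$, $0<c<\frac 15$. Writing $c=S_1(5c)$ and $\frac 15=S_1(1)$, and using $S_1(x)-S_1(b)=\frac 15(x-b)$, one gets
\[f(S_1(x))=\min_{b\in\set{5c,1}}\big(S_1(x)-S_1(b)\big)^2=\frac 1{25}\min_{b\in\set{5c,1}}(x-b)^2,\]
so that
\[\int_{J_1}\min_{a\in\gb}(x-a)^2\,dP=\frac 1{75}\int\min_{b\in\set{5c,1}}(x-b)^2\,dP.\]
Since $0<c<\frac 15$ is equivalent to $0<5c<1$, Lemma~\ref{lemma0001} applies to the set $\set{5c,1}$ and shows the right-hand integral is at least $\frac{1517}{43800}$, with equality exactly when $5c=\frac 3{10}$, i.e. $c=\frac 3{50}$. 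Multiplying by $\frac 1{75}$ gives the claimed value $\frac 1{75}\cdot\frac{1517}{43800}=\frac{1517}{3285000}=0.000461796$ and the claimed minimizer.

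I do not anticipate a genuine obstacle here; the only point that needs a little care is the justification that $P|_{J_1}=\frac 13 P\circ S_1^{-1}$, which rests on the disjointness of $J_1$ from $J_2$ and $L$ (already used repeatedly in the preceding lemmas) together with the fact that $P\circ S_1^{-1}$ is supported inside $J_1$. Everything else is the scaling identity $S_1(x)-S_1(b)=\frac 15(x-b)$ and a direct citation of Lemma~\ref{lemma0001}; the numerical check $\frac{1517}{75\cdot 43800}=\frac{1517}{3285000}$ is routine.
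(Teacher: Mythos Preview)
Your argument is correct and is essentially identical to the paper's proof: both pass from $\int_{J_1}\min_{a\in\gb}(x-a)^2\,dP$ to $\frac{1}{75}\int\min_{a\in S_1^{-1}(\gb)}(x-a)^2\,dP$ via the self-similarity $P|_{J_1}=\frac 13\,P\circ S_1^{-1}$ and the scaling $S_1(x)-S_1(b)=\frac 15(x-b)$, and then invoke Lemma~\ref{lemma0001} with $S_1^{-1}(\gb)=\set{5c,1}$. The only difference is cosmetic: you spell out the justification for the restriction identity and write $\set{5c,1}$ explicitly where the paper writes $S_1^{-1}(\gb)$.
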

\begin{proof} By Lemma~\ref{lemma0001}, we have
\begin{align*}
&\int_{J_1}\min_{a\in \gb} (x-a)^2 dP=\frac 13 \int_{J_1}\min_{a\in \gb} (x-a)^2 d(P\circ S_1^{-1})=\frac 13 \int\min_{a\in \gb} (S_1(x)-a)^2 dP\\
&=\frac 13 \int\min_{a\in S_1^{-1}(\gb)} (S_1(x)-S_1(a))^2 dP=\frac 1{75} \int\min_{a\in S_1^{-1}(\gb)}(x-a)^2 dP=\frac{1517}{3285000},
\end{align*}
which occurs when $c=\frac 3{50}$.
\end{proof}

\begin{lemma}  \label{lemma001} Let $\ga$ be an optimal set of four-means. Then, $\ga\ii J_1\neq \es$, $\ga \ii J_2\neq \es$, and $\ga\ii L\neq \es$. Moreover, $\ga$ does not contain any point from the open intervals $(\frac 15, \frac 25)$ and $(\frac 35, \frac 45)$.
\end{lemma}

\begin{proof} Let us first consider the set $\gb:=\set{S_1(\frac 12), T_1(\frac 12), T_2(\frac 12), S_2(\frac 12)}$. Then,
 \begin{align*}
 \int \min_{a\in \gb}(x-a)^2 dP & =2\Big(\int_{J_{1}}(x-S_{1}(\frac 12))^2 dP+\int_{T_1(L)}(x-T_{1}(\frac 12))^2 dP\Big) \\
 &=2\Big(\frac 1{75} V+\frac 13 \frac 1{18} W\Big)
=\frac{1243}{394200}.
\end{align*}
Since $V_4$ is the quantization error for four-means, we have $V_4\leq \frac{1243}{394200} . $ Let $\ga:=\set{a_1<a_2<a_3<a_4}$ be an optimal set of four-means.  We first show that $\ga\ii J_1\neq \es$. For the sake of contradiction, assume that $\ga\ii J_1=\es$. Then $\frac 15<a_1$, which yields
\[V_4\geq \int_{J_1}(x-\frac 15)^2 dP=\frac{211}{43800}>V_4,\]
which is a contradiction. Thus, we can assume that $\ga\ii J_1\neq \es$. Similarly, we can show that $\ga\ii J_2\neq\es$.
We now show that $\ga \ii L\neq \es$. For the sake of contradiction, assume that $\ga \ii L=\es$. Suppose that $a_2>\frac 35$. Then,
\begin{align*}
V_4&\geq \int_{J_1}(x-a_1)^2 dP+\int_L(x-\frac 35)^2dP\geq \int_{J_1}(x-S_1(\frac 12))^2 dP+\int_L(x-\frac 35)^2dP\\
&=\frac 1{75} V+ \frac 13 (W+(\frac 12-\frac 35)^2)=\frac{71}{10950}>V_4,
\end{align*}
which leads to a contradiction. So, we can assume that $a_2<\frac 25$. Similarly, we have $\frac 35<a_3$. Due to symmetry of $P$, the following two cases can occur:

Case 1. $\frac 13\leq a_2<\frac 25$ and $\frac 35<a_3\leq \frac 23$.

In this case, $\frac 13\leq a_2<\frac 25$ implies $\frac 12(a_1+a_2)<\frac 15$ yielding $a_1<\frac 25-a_2\leq \frac 25-\frac 13=\frac 1{15}<\frac{2}{25}$. Thus, due to symmetry, we have
\begin{align*}
&V_4\geq 2\Big(\int_{S_1(L)\uu J_{12}}(x-\frac 1{15})^2 dP+\int_{T_1(L)}(x-\frac 25)^2 dP\Big)=\frac{110597}{29565000}>V_4,
\end{align*}
which gives a contradiction.

Case 2. $a_2\leq \frac 13$ and $\frac {2}{3}\leq a_3$.

In this case we have, \[V_4\geq 2 \int_{T_1(L)}(x-\frac 13)^2 dP=\frac{19}{5400}>V_4,\ \text{which is a contradiction.} \]

By Case~1 and Case~2, we deduce that $\ga\ii L\neq \es$. We now show that $\ga$ does not contain any point the open intervals $(\frac 15, \frac 25)$ and $(\frac 35, \frac 45)$. Suppose that $a_2\in (\frac 15, \frac 25)$. Then, notice that $a_3\in L$ and $a_4\in J_2$. Again, two cases can arise:

Case~I. $\frac 13\leq a_2<\frac 25$.

In this case, $\frac 12(a_1+a_2)<\frac 15$ implying $a_1<\frac 25-a_2\leq \frac 25-\frac 13=\frac 1{15}<\frac 2{25}$, and so
\begin{align*}
V_4\geq \int_{S_1(L)\uu J_{12}}(x-\frac 1{15})^2 dP+\int_{T_1(L)}(x-\frac 25)^2 dP  +\int_{T_2(L)}(x-T_2(\frac 12))^2 dP+\int_{J_2}(x-S_2(\frac 12))^2 dP;
\end{align*}
and hence, $V_4\geq \frac{101911}{29565000}>V_4$, which is a contradiction.

Case~II. $\frac 15< a_2<\frac 13$.

In this case, $\frac 12(a_2+a_3)>\frac 25$ implying $a_3>\frac 45-a_2\geq \frac 45-\frac 13=\frac 7{15}$. Recall Corollary~\ref{cor2}, and $\int_{J_2}(x-S_2(\frac 12))^2 dP=\frac 1{75}V$.  The following subcases arise:
\begin{itemize}
\item[(i)] If $\frac 7{15}< a_3\leq \frac{22}{45},$ then
\[V_4\geq \frac{1517}{3285000}+\int_{T_1(L)}(x-\frac 7{15})^2 dP  +\int_{T_2(L)}(x-\frac{22}{45})^2 dP+\frac 1{75} V=\frac{294859}{88695000}>V_4, \]
\item[(ii)] If $\frac{22}{45}\leq a_3\leq \frac{1}{2},$ then $T_{111}(\frac 35)<\frac 12(\frac 13+\frac {22}{45})<T_{112}(\frac 25)$ implying that
\begin{align*}
V_4& \geq \frac{1517}{3285000}+\int_{T_{111}(L)}(x-\frac 1{3})^2 dP  +\int_{T_{112}(L)\uu T_{12}(L)}(x-\frac{22}{45})^2 dP+\int_{T_2(L)}(x-\frac 12)^2dP+\frac 1{75} V\\
&=\frac{218863}{66521250}>V_4,
\end{align*}
\item[(iii)] If $\frac 12\leq a_3\leq \frac{91}{180},$
then $T_{111}(\frac 35)<\frac 12(\frac 13+\frac {1}{2})<T_{1122}(\frac 25)$ implying that
\begin{align*}
V_4& \geq \frac{1517}{3285000}+\int_{T_{111}(L)}(x-\frac 1{3})^2 dP  +\int_{T_{1122}(L)\uu T_{12}(L)}(x-\frac{1}{2})^2 dP+\int_{T_2(L)}(x-\frac {91}{180})^2dP+\frac 1{75} V\\
&=\frac{121358413}{38316240000}>V_4,
\end{align*}
\item[(iv)] If $\frac {91}{180}\leq a_3\leq \frac{23}{45},$
then $T_{1121}(\frac 35)<\frac 12(\frac 13+\frac {91}{180})<T_{1122}(\frac 25)$ implying that
\begin{align*}
V_4& \geq \frac{1517}{3285000}+\int_{T_{111}(L)\uu T_{1121}(L)}(x-\frac 1{3})^2 dP  +\int_{T_{1122}(L)\uu T_{12}(L)}(x-\frac{91}{180})^2 dP+\int_{T_2(L)}(x-\frac {23}{45})^2dP\\
&+\frac 1{75} V=\frac{83388217}{25544160000}>V_4,
\end{align*}
\item[(v)] If $ \frac{23}{45}\leq a_3\leq \frac{31}{60},$
then $T_{11}(\frac 35)=\frac 12(\frac 13+\frac {23}{45})$ implying that
\begin{align*}
V_4& \geq \frac{1517}{3285000}+\int_{T_{11}(L)}(x-\frac 1{3})^2 dP  +\int_{T_{12}(L)}(x-\frac{23}{45})^2 dP+\int_{T_2(L)}(x-\frac {31}{60})^2dP\\
&+\frac 1{75} V=\frac{381587}{118260000}>V_4,
\end{align*}
\item[(vi)] If $\frac {31}{60}\leq a_3\leq \frac{21}{40},$
then $T_{11}(\frac 35)<\frac 12(\frac 13+\frac {31}{60})<T_{12}(\frac 25)$ implying
\begin{align*}
V_4& \geq \frac{1517}{3285000}+\int_{T_{11}(L)}(x-\frac 1{3})^2 dP  +\int_{T_{12}(L)}(x-\frac{31}{60})^2 dP+\int_{T_2(L)}(x-\frac {21}{40})^2dP\\
&+\frac 1{75} V=\frac{1491673}{473040000}>V_4,
\end{align*}
\item[(vii)] If $\frac{21}{40}\leq a_3\leq \frac{127}{240},$
then $T_{11}(\frac 35)<\frac 12(\frac 13+\frac {21}{40})<T_{12}(\frac 25)$ implying
\begin{align*}
V_4& \geq \frac{1517}{3285000}+\int_{T_{11}(L)}(x-\frac 1{3})^2 dP  +\int_{T_{12}(L)}(x-\frac{21}{40})^2 dP+\int_{T_2(L)}(x-\frac{127}{240})^2dP\\
&+\frac 1{75} V=\frac{6034217}{1892160000}>V_4,
\end{align*}
\item[(viii)] If $ \frac{127}{240}\leq a_3\leq \frac{8}{15},$
then $T_{11}(\frac 35)<\frac 12(\frac 13+\frac{127}{240})<T_{12}(\frac 25)$ implying
\begin{align*}
V_4& \geq \frac{1517}{3285000}+\int_{T_{11}(L)}(x-\frac 1{3})^2 dP  +\int_{T_{12}(L)}(x-\frac{127}{240})^2 dP+\int_{T_2(L)}(x-\frac{8}{15})^2dP\\
&+\frac 1{75} V=\frac{12070259}{3784320000}>V_4,
\end{align*}
\item[(ix)] If $ \frac{8}{15}\leq a_3\leq \frac 12(\frac 8{15}+T_{212}(\frac 25))=\frac{73}{135},$ then $T_{11}(\frac 35)<\frac 12(\frac 13+\frac{8}{15})<T_{12}(\frac 25)$ implying
\begin{align*}
V_4& \geq \frac{1517}{3285000}+\int_{T_{11}(L)}(x-\frac 1{3})^2 dP  +\int_{T_{12}(L)}(x-\frac{8}{15})^2 dP+\int_{T_{22}(L)}(x-\frac{73}{135})^2dP\\
&+\frac 1{75} V=\frac{10098449}{3193020000}>V_4,
\end{align*}
\item[(x)] If $\frac 12(\frac 8{15}+T_{212}(\frac 25))=\frac{73}{135}\leq a_3\leq \frac 5 9=T_{21}(\frac 35),$ then $T_{11}(\frac 35)<\frac 12(\frac 13+\frac{73}{135})<T_{12}(\frac 25)$ implying
\begin{align*}
V_4& \geq \frac{1517}{3285000}+\int_{T_{11}(L)}(x-\frac 1{3})^2 dP  +\int_{T_{12}(L)}(x-\frac{73}{135})^2 dP+\int_{T_{22}(L)}(x-\frac{5}{9})^2dP\\
&+\frac 1{75} V=\frac{10098449}{3193020000}>V_4,
\end{align*}
\item[(xi)] If $ T_{21}(\frac 35)=\frac 59\leq a_3,$ then
$\frac 12(\frac 13+\frac{5}{9})=T_{12}(\frac 25)$ implying that
$$ V_4 \geq \frac{1517}{3285000}+\int_{T_{11}(L)}(x-\frac 1{3})^2 dP  +\int_{T_{12}(L)}(x-\frac 59)^2 dP+\frac 1{75} V
=\frac{584243}{177390000}>V_4,$$
\end{itemize}
each of which is a contradiction.  Thus, by Case~I and Case~II, it follows that  $a_2\not \in (\frac 15, \frac 25)$, i.e., $\ga$ does not contain any point from the open interval  $(\frac 15, \frac 25)$. Reflecting the situation with respect to the point $\frac 12$, we  also deduce that $\ga$ does not contain any point from the open interval  $(\frac 35, \frac 45)$. Thus, the proof of the lemma is complete.
\end{proof}

\begin{prop} \label{prop3}
$\set{S_1(\frac 12), T_1(\frac 12), T_2(\frac 12), S_2(\frac 12)}$ is an optimal set of four-means for $P$ with quantization error $V_4=\frac{1243}{394200}.$
\end{prop}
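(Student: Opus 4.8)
The plan is to follow the same two-part strategy used for Propositions~\ref{prop2} and~\ref{prop0021}: first exhibit the candidate set and compute its distortion, thereby giving an upper bound for $V_4$; then show that any optimal set of four-means must coincide with the candidate by localizing the four optimal quantizers. For the upper bound, I would take $\gb:=\set{S_1(\frac 12), T_1(\frac 12), T_2(\frac 12), S_2(\frac 12)}$ and split $K$ according to the Voronoi partition of $\gb$: the point $S_1(\frac12)=\frac1{10}$ captures $J_1$, the point $S_2(\frac12)=\frac9{10}$ captures $J_2$, and $T_1(\frac12), T_2(\frac12)$ split the condensation piece $L$ (note $T_1(\frac12)$ and $T_2(\frac12)$ are exactly the two optimal quantizers for $\gn$ at level $n=2$). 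Using \eqref{eq4} for the $J_1, J_2$ contributions (each equal to $\frac13\cdot\frac1{25}V$) and \eqref{eq40} together with Proposition~\ref{prop0} (with $\ell(2)=1$) for the $L$ contribution, the distortion is $\frac23\cdot\frac1{75}V+\frac13 V_2(\gn)$, which should evaluate to $\frac{1243}{394200}$. This gives $V_4\le \frac{1243}{394200}$.

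For the lower bound, let $\ga=\set{a_1,a_2,a_3,a_4}$ be optimal with $a_1<a_2<a_3<a_4$. By Proposition~\ref{prop0021}, $\ga\ii J_1\ne\es$, $\ga\ii J_2\ne\es$, $\ga\ii L\ne\es$, and $\ga$ contains no point of $(\frac15,\frac25)\cup(\frac35,\frac45)$. Combined with symmetry about $\frac12$, this forces exactly one quantizer in $J_1$, one in $J_2$, and the remaining two in $L$ (if all of $a_2,a_3$ were in $L$ we would need $a_1\in J_1$, $a_4\in J_2$; any other distribution either puts a point in the forbidden gaps or violates the nonemptiness of $\ga\ii J_1,\ga\ii J_2,\ga\ii L$). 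Then I would argue, as in Proposition~\ref{prop2}, that the Voronoi region of the $J_1$-quantizer contains no point of $L\cup J_2$ and symmetrically for the $J_2$-quantizer, and that the two $L$-quantizers' Voronoi regions contain no point of $J_1\cup J_2$; this uses the boundary-bisector estimates exactly as before (e.g.\ $\frac12(a_1+a_2)>\frac25$ would force $a_2>\frac45-\frac15=\frac35$, impossible). Once the four Voronoi regions are pinned to $J_1$, $S_\es[\frac25,\frac12]$-type halves, and $J_2$, Proposition~\ref{prop10}(iii) identifies each $a_i$ as a conditional expectation: $a_1=S_1(\frac12)$, $a_4=S_2(\frac12)$, and $a_2,a_3$ must be an optimal $2$-set for $P|_L=\gn$, hence $\set{T_1(\frac12),T_2(\frac12)}$ by Proposition~\ref{prop0}. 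The resulting distortion equals the upper bound, so this set is optimal and $V_4=\frac{1243}{394200}$.

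The main obstacle is the localization step that rules out mixed configurations and pins down the Voronoi cells — in particular showing the two "middle" quantizers both lie in $L$ with neither stealing mass from $J_1$ or $J_2$, and that the outer quantizers do not reach into $L$. This is where the separation geometry (the gaps $(\frac15,\frac25)$ and $(\frac35,\frac45)$ being $P$-null but forcing bisector constraints) does the real work; the computations themselves are routine substitutions into \eqref{eq4} and \eqref{eq40}. A minor point to handle carefully is justifying that the two $L$-quantizers form an \emph{optimal} pair for $\gn$ (not merely a good one): this follows because any improvement on $L$ would, via Lemma~\ref{lemma4}-type reasoning applied to $P|_L=\gn$, improve the total distortion, contradicting optimality of $\ga$.
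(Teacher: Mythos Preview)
Your overall strategy matches the paper's: compute the distortion of the candidate $\gb$ to get $V_4\le \frac{1243}{394200}$, then use Proposition~\ref{prop0021} to localize any optimal $\ga$ to $J_1\cup L\cup J_2$ and determine the cardinalities $(\te{card}(\ga\ii J_1),\te{card}(\ga\ii L),\te{card}(\ga\ii J_2))$.

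There is, however, a genuine gap in your cardinality step. You write that ``symmetry about $\frac12$ \dots\ forces exactly one quantizer in $J_1$, one in $J_2$, and the remaining two in $L$,'' and your parenthetical enumeration claims any distribution other than $(1,2,1)$ violates either the gap restriction or nonemptiness. That is false: the configuration $a_1,a_2\in J_1$, $a_3\in L$, $a_4\in J_2$ (i.e., $(2,1,1)$) satisfies all constraints of Proposition~\ref{prop0021}. Symmetry of $P$ only tells you that if $(2,1,1)$ is optimal then so is its reflection $(1,1,2)$; it does \emph{not} exclude either one, because an optimal set need not itself be symmetric.

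The paper closes this gap by a direct distortion comparison: assuming $\te{card}(\ga\ii L)=1$, say $(2,1,1)$, one has (via Lemma~\ref{lemma4}) $\ga\ii J_1=\{S_1(\frac{19}{90}),S_1(\frac{71}{90})\}$, $\ga\ii L=\{\frac12\}$, $\ga\ii J_2=\{S_2(\frac12)\}$, and the resulting error $\frac{312379}{88695000}\approx 0.00352195$ exceeds the upper bound $\frac{1243}{394200}\approx 0.00315322$, a contradiction. You should insert this computation (or an equivalent lower bound for the $(2,1,1)$ case) in place of the symmetry appeal. The rest of your plan---separating Voronoi regions across the gaps and identifying the $L$-pair as an optimal $2$-set for $\gn$---is sound and in fact spells out more carefully what the paper takes for granted at the end.
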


\begin{proof} If $\gb:=\set{S_1(\frac 12), T_1(\frac 12), T_2(\frac 12), S_2(\frac 12)} , $ then
 \begin{align*}
 \int \min_{a\in \gb}(x-a)^2 dP &=2\Big(\int_{J_{1}}(x-S_{1}(\frac 12))^2 dP+\int_{T_1(L)}(x-T_{1}(\frac 12))^2 dP\Big) \\
 &=2\Big(\frac 1{75} V+\frac 13 \frac 1{18} W\Big)
=\frac{1243}{394200}.
\end{align*}
Since $V_4$ is the quantization error for four-means, $V_4\leq
\frac{1243}{394200} . $ Let $\ga:=\set{a_1, a_2, a_3, a_4}$ be an optimal set of four-means. Since optimal quantizers are the centroids of their own Voronoi regions, without any loss of generality, we can assume that $0<a_1<a_2<a_3<a_4<1$. By Lemma~\ref{lemma001}, we see that $\ga$ contains points from $J_1$, $L$, $J_2$, and $\ga$ does not contain any point from the open intervals $(\frac 15, \frac 25)$, $(\frac 35, \frac 45)$. We now show that $\te{card}(\ga\ii L)=2$. Suppose that $\te{card}(\ga\ii L)=1$. Then, without any loss of generality, we can assume that
$\te{card}(\ga\ii J_1)=2$ and $\te{card}(\ga\ii J_2)=1$. Recall Proposition~\ref{prop00021}. Then, by Lemma~\ref{lemma4}, we have
$\ga\ii J_1=\set{S_1(\frac {19}{90}), S_1(\frac {71}{90})}$, $\ga\ii L=\set{\frac 12}$, and $\ga\ii J_2=\set{S_2(\frac 12)}$. Then, we see that the quantization error is $\frac{312379}{88695000}>V_4$, which is a contradiction. Hence, $\te{card}(\ga\ii L)=2$, $\te{card}(\ga\ii J_1)=\te{card}(\ga\ii J_2)=1$ implying the fact that $\ga=\set{S_1(\frac 12), T_1(\frac 12), T_2(\frac 12), S_2(\frac 12)}$ is an optimal set of four-means with quantization error $V_4=\frac{1243}{394200} . $
\end{proof}

\begin{lemma}  \label{lemma002} Let $\ga$ be an optimal set of five-means. Then, $\ga\ii J_1\neq \es$, $\ga \ii J_2\neq \es$, and $\ga\ii L\neq \es$. Moreover, $\ga$ does not contain any point from the open intervals $(\frac 15, \frac 25)$ and $(\frac 35, \frac 45)$.
\end{lemma}

\begin{proof} First, consider the set $\gb:=\set{S_1(\frac {19}{90}), S_1(\frac{71}{90}), T_1(\frac 12), T_2(\frac 12), S_2(\frac 12)}$. Then,
\begin{align*}
\int \min_{a\in \gb}(x-a)^2 dP
&= 2 \Big(\int_{J_{11}\uu S_1(T_1(L))}(x-S_1(\frac {19}{90}))^2 dP+\int_{T_1(L)}(x-T_{1}(\frac 12))^2 dP\Big) \\
&+\int_{J_2}(x-S_2(\frac 12))^2 dP
=2\Big(\frac 1{75} \frac 12 V_2+\frac 13 \frac 1{18} W\Big)+\frac 1{75} V=\frac{180979}{88695000}.
\end{align*}
Since $V_5$ is the quantization error for five-means, we have $V_5\leq \frac{180979}{88695000} .$ Let $\ga:=\set{a_1<a_2<a_3<a_4<a_5}$ be an optimal set of five-means. Proceeding in the similar way as shown in the proof of Proposition~\ref{prop2}, we have $0<a_1<\frac 15$ and $\frac 45<a_5<1$ implying $\ga \ii J_1\neq\es$ and $\ga \ii J_2\neq \es$. We now show that $\ga \ii L\neq \es$. For the sake of contradiction, assume that $\ga \ii L=\es$. \\
\noindent If $\frac 35<a_2$. Then,
\[V_5\geq \int_{J_1}(x-S_1(\frac 12))^2 dP+\int_{L}(x-\frac 35)^2 dP=\frac 1{75} V+\frac 13(W+(\frac 12-\frac 35)^2)=\frac{71}{10950}>V_5. \] \\
\noindent If $a_2<\frac 25<\frac 35<a_3$, the following three cases can arise:
\begin{itemize}
\item[(i)] $\frac 3{10}\leq a_2<\frac 25$ and $\frac 35<a_3\leq \frac {7}{10}$. Then, $\frac 12 (a_1+a_2)<\frac 15$ and $\frac 45<\frac 12(a_3+a_4)$ implying $a_1<\frac 2 5-a_2\leq \frac  1{10}$ and $a_4>\frac 85-a_3 \geq \frac 9{10}$. Hence, by symmetry,
\begin{align*}
&V_5\geq \int_{J_{11}\uu S_1(T_1(L))}(x-S_1(\frac {19}{90}))^2 dP+ 2 \int_{S_1(T_2(L))\uu J_{12}}(x-\frac 1{10})^2 dP+2 \int_{T_1(L)}(x-\frac 25)^2 dP\\
&=\frac{394729}{177390000}>V_5.
\end{align*}
\item[(ii)] $\frac 3{10}\leq a_2<\frac 25$  and $\frac {7}{10}\leq a_3$.
Then, $T_{21}(\frac 25)<\frac 12(\frac 25+\frac 7{10})<T_{21}(\frac 35)$, and so
\begin{align*}
V_5&\geq \int_{J_{11}\uu S_1(T_1(L))}(x-S_1(\frac {19}{90}))^2 dP+  \int_{S_1(T_2(L))\uu J_{12}}(x-\frac 1{10})^2 dP+ \int_{T_1(L)}(x-\frac 25)^2 dP\\
&+\int_{T_{22}(L)}(x-\frac 7{10})^2 dP=\frac{794483}{354780000}>V_5.
\end{align*}
\item[(iii)] $a_2\leq \frac 3{10}$ and $\frac {7}{10}\leq a_3$.
Then, due to symmetry,
\begin{align*} &V_5\geq 2 \int_{T_1(L)}(x-\frac 3{10})^2 dP=\frac{11}{1800}>V_5.
\end{align*}
\end{itemize}
In each of the cases we reach a contradiction; hence, we can assume that $\ga \ii L\neq \es$.

Next, recall Proposition~\ref{prop00021}. If $\te{card}(\ga\ii L)=3$, then
\[V_5\geq \int_{J_1}(x-S_1(\frac 12))^2 dP+\int_{J_2}(x-S_2(\frac 12))^2 dP=\frac 2 {75}V=\frac{13}{4380} >V_5,\]
which gives a contradiction. So, we can assume that $1\leq \te{card}(\ga\ii L)\leq 2$.  If $\te{card}(\ga\ii L)=1 , $ then, due to symmetry, the following two cases can arise:

Case~1. $a_3=\frac 12$, $\frac 3{10}\leq a_2<\frac 25$ and $\frac 35<a_4\leq \frac {7}{10}$:
As shown above, we have $a_1<\frac 1{10}$ and $a_5>\frac 9{10}$. Moreover, $T_{1212}(\frac 25)<\frac 12(\frac 25+\frac 12)<T_{1212}(\frac 35)$. Thus,
\begin{align*}
V_5&\geq 2\int_{J_{11}\uu S_1(T_1(L))}(x-S_1(\frac {19}{90}))^2 dP+2 \int_{S_1(T_2(L))\uu J_{12}}(x-\frac 1{10})^2 dP+2\int_{T_{11}(L)\uu T_{1211}}(x-\frac 25)^2 \\
&  +2\int_{T_{122}(L)}(x-\frac 12)^2=\frac{40002139}{19158120000}>V_5,\ \text{contradiction.}
\end{align*}

Case~II. $a_3=\frac 12$, $a_2\leq \frac 3{10}$ and $\frac {7}{10}\leq a_4$:
First, notice that $\frac 12(\frac 3{10}+\frac 12)=\frac 25$ and $\frac 12(\frac 12+\frac 7{10})=\frac 35$ implying the fact that the Voronoi regions of $a_2$ and $a_4$ do not contain any point from $L$, and thus, we have $a_1, a_2\in J_1$, and $a_4, a_5\in J_2$. Hence,
\[V_5\geq \int_{J_1}\min_{a\in \set{a_1, a_2}}(x-a)^2 dP+\int_L(x-\frac 12)^2+\int_{J_2}\min_{a\in \set{a_4, a_5}}(x-a)^2 dP=\frac 2{75} \frac 12 V_2+\frac 13 W+\frac 2{75} \frac 12 V_2\]
yielding
$V_5\geq \frac{213683}{88695000}>V_5$, which gives a contradiction.

Therefore, we can assume that $\te{card}(\ga\ii L)=2$. We now show that $\ga$ does not contain any point from the open intervals $(\frac 15, \frac 25)$ and $(\frac 35, \frac 45)$. Suppose that $\ga$ contains a point from $(\frac 15, \frac 25)$. In that case, $\ga$ does not contain any point from $(\frac 35, \frac 45)$ as $a_1\in J_1$ and $a_3, a_4\in L$, and $a_5 \in J_2$; consequently, the following two possibilities arise:\\
1.  $\frac 3{10}\leq a_2<\frac 25$: Then, $a_1< \frac 1{10},$ and
\begin{align*}
&V_5\geq \int_{J_{11}\uu S_1(T_1(L))}(x-S_1(\frac {19}{90}))^2 dP+ \int_{S_1(T_2(L))\uu J_{12}}(x-\frac 1{10})^2 dP+ \int_{T_1(L)}(x-\frac 25)^2 dP\\
& \qquad \qquad \qquad +\int_{T_{21}(L)}(x-T_{21}(\frac 12))^2 dP+\int_{T_{22}(L)}(x-T_{22}(\frac 12))^2 dP+\int_{J_2}(x-S_2(\frac 12))^2 dP\\
&= \frac{59863}{22173750}>V_5,
\end{align*} \\
2. $\frac 15<a_2\leq \frac 3{10}$:
Then, $\frac 12(a_2+a_3)>\frac 25$ implying $a_3>\frac 45-a_2\geq \frac 45-\frac 3{10}=\frac 12$, and so
\[V_5\geq  \int_{T_1(L)}(x-\frac 12)^2 dP+\int_{T_{21}(L)}(x-T_{21}(\frac 12))^2 dP+\int_{T_{22}(L)}(x-T_{22}(\frac 12))^2 dP+\int_{J_2}(x-S_2(\frac 12))^2 dP\]
yielding $V_5\geq \frac{4129}{1773900}>V_5$.

Again, in both cases we reach a contradiction.
Thus, it follows that $\ga$ does not contain any point from the open interval $(\frac 15, \frac 25)$. Reflecting the situation with respect to the point $\frac 12$, we can also show that $\ga$ does not contain any point from the open interval $(\frac 35, \frac 45)$.
\end{proof}

\begin{lemma} \label{lemma003}
Let $\ga$ be an optimal set of $n$-means for $n=6$. Then, $\ga\ii J_1\neq \es$, $\ga \ii J_2\neq \es$ and $\ga\ii L\neq \es$. Moreover, $\ga$ does not contain any point from the open intervals $(\frac 15, \frac 25)$ and $(\frac 35, \frac 45)$.
\end{lemma}

\begin{proof}  Let $\ga:=\set{a_1, a_2, a_3, a_4, a_5, a_6}$ be an optimal set of six-means.  Since optimal quantizers are the centroids of their own Voronoi regions, without any loss of generality, we can assume that $0<a_1<a_2<a_3<a_4<a_5<a_6<1$.
Now, consider the set of points $\gb:=\set{S_1(\frac {19}{90}), S_1(\frac{71}{90}), T_1(\frac 12), T_2(\frac 12), S_2(\frac {19}{90}), S_2(\frac{71}{90})}$. Then,
 \begin{align*}
& \int \min_{a\in \gb}(x-a)^2 dP\\
&= 2 \Big(\int_{J_{11}\uu S_1(T_1(L))}(x-S_1(\frac {19}{90}))^2 dP+\int_{S_1(T_2(L))\uu J_{12}}(x-S_1(\frac {71}{90}))^2 dP+\int_{T_1(L)}(x-T_{1}(\frac 12))^2 dP\Big)\\
&=2\Big(\frac 1{75} \frac 12 V_2+\frac 1{75} \frac 12 V_2+\frac 13 \frac 1{18} W\Big)=\frac{82283}{88695000}.
\end{align*}
Since $V_6 $ is the quantization error for six-means, we have $V_6\leq \frac{82283}{88695000}. $ Proceeding in the similar way as in the proof of Proposition~\ref{prop2}, we have $0<a_1<\frac 15$ and $\frac 45<a_4<1$ implying $\ga \ii J_1\neq\es$ and $\ga \ii J_2\neq \es$. $P$ being symmetric about $\frac 12$, we can assume that there are three optimal quantizers to the left of $\frac 12$ and three optimal quantizers to the right of $\frac 12$. We now show that $\ga \ii L\neq \es$. Suppose that $\ga \ii L= \es$. Then, $a_3<\frac 25$ and $\frac 35< a_4$. Due to symmetry, first suppose that $\frac 3{10}\leq a_3<\frac 25$ and $\frac 35<a_4\leq \frac {7}{10}$. Then, $\frac 12 (a_2+a_3)<\frac 15$ and $\frac 45<\frac 12(a_4+a_5)$ implying $a_2<\frac 2 5-a_3\leq\frac 25-\frac 3{10}=\frac  1{10}$ and $a_5>\frac 85-a_4 \geq \frac 85-\frac 7{10}=\frac 9{10}$. Hence, by Corollary~\ref{cor1}, we have
\begin{align*} \label{eq0000}
V_6\geq 2\Big(\int_{S_1(T_2(L))\uu S_1(J_2)}(x-\frac 1{10})^2 dP\Big)=\frac{13}{8760}>V_6,
\end{align*}
which gives a contradiction. Next, suppose that $a_3\leq \frac 3{10}$ and $\frac 7{10}\leq a_4$. Then, \[V_6\geq 2 \int_{T_1(L)}(x-\frac 3{10})^2 dP=\frac{11}{1800}>V_6,\]
which leads to another contradiction. So, we can assume that $\ga\ii L\neq \es$. We now show that $\ga$ does not contain any point from the open intervals $(\frac 15, \frac 25)$ and $(\frac 35, \frac 45)$. Suppose that $\ga$ contains a point from the open interval $(\frac 15, \frac 25)$. Then, due to symmetry, $\ga$ will also contain a point from $(\frac 35, \frac 45)$, and the only possible case is that $a_2 \in (\frac 15, \frac 25)$ and $a_5 \in (\frac 35, \frac 45)$. The following two case can happen:

Case~1. $\frac 3{10}\leq a_2<\frac 25$ and $\frac 35<a_5\leq \frac {7}{10}$: Then,
\begin{align*}
V_6\geq 2\Big(\int_{S_1(T_2(L))\uu S_1(J_2)}(x-\frac 1{10})^2 dP\Big)=\frac{13}{8760}>V_6,\ \text{a contradiction.}
\end{align*}

Case~2. $\frac 15<a_2\leq \frac 3{10}$ and $\frac 7{10}\leq a_5<\frac 45$:
Then, $\frac 12(a_2+a_3)>\frac 25$ implies $a_3>\frac 45-a_2\geq \frac 45-\frac 3{10}=\frac 12$, i.e., $a_3>\frac 12$. Similarly, $a_4<\frac 12$, which is a contradiction, as $a_3<a_4$.

Hence, $\ga$ does not contain any point from the open intervals $(\frac 15, \frac 25)$ and $(\frac 35, \frac 45)$. 
\end{proof}

\begin{prop}  \label{prop003} Let $\ga_n$ be an optimal set of $n$-means for all $n\geq 3$. Then, $\ga_n\ii J_1\neq \es$, $\ga_n \ii J_2\neq \es$, and $\ga_n\ii L\neq \es$. Moreover, $\ga_n$ does not contain any point from the open intervals $(\frac 15, \frac 25)$ and $(\frac 35, \frac 45)$.
\end{prop}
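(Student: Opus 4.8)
The plan is to argue by induction on $n$, the cases $n=3$ and $n\in\{4,5,6\}$ being Propositions~\ref{prop2}, \ref{prop0021} and~\ref{prop3}. Three structural facts drive the argument. First, $P$ gives mass $\frac13$ to each of the pairwise disjoint intervals $J_1=[0,\frac15]$, $L=[\frac25,\frac35]$, $J_2=[\frac45,1]$, and the two open gaps $(\frac15,\frac25)$, $(\frac35,\frac45)$ carry no $P$-mass (they meet neither $K$ nor, a fortiori, $\operatorname{supp}P$). Second, $P|_{J_1}$ and $P|_{J_2}$ are affine images of $P$ while $P|_L=\gn$, so Lemma~\ref{lemma4}, Proposition~\ref{prop0}, and the identities~\eqref{eq4}, \eqref{eq40}, together with Corollary~\ref{cor1}, Lemma~\ref{lemma0001} and Corollary~\ref{cor2}, supply exact values for every local distortion that occurs. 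Third, $\operatorname{supp}P$ is infinite, so $(V_n)$ is strictly decreasing and hence $V_n<V_6=\frac{82283}{88695000}\approx 0.000928$ for every $n\ge 7$; this single numerical upper bound is what I would compare against the ``bad-configuration'' lower bounds below, and because those bounds all come from the finitely many local integrals above (rescaled by the contractions $S_\go$), the same finite list of estimates settles all $n\ge 7$ at once.

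First I would show $\ga_n$ meets both $J_1$ and $J_2$. Write $\ga_n=\{a_1<\dots<a_n\}$; each $a_i$ is the centroid of its Voronoi region, so $0<a_1$ and $a_n<1$. If $a_1\ge\frac15$, then exactly as in the proof of Proposition~\ref{prop2} one gets $V_n\ge\int_{J_1}(x-\tfrac15)^2\,dP=\frac{211}{43800}\approx 0.00482>V_6>V_n$, a contradiction; hence $a_1\in(0,\frac15)\subset J_1$, and symmetrically $a_n\in(\frac45,1)\subset J_2$.

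Next I would rule out quantizers in the open gaps. Suppose $a_j\in(\frac15,\frac25)$, and let $M(a_j|\ga_n)=[\ell,r]$. Since $K\cap(\frac15,\frac25)=\es$ and $a_j=E(X:X\in[\ell,r])$ by Proposition~\ref{prop10}(iii), the cell must meet $K$ both in $(0,\frac15]$ and in $[\frac25,1)$ --- otherwise its centroid would fall in $J_1$, resp.\ in $L$, not in the gap --- so $\ell<\frac15$ and $r>\frac25$, whence $a_{j-1}<\frac15$ (so $a_{j-1}\in J_1$) and $a_{j+1}>\frac25$. I would then build an $n$-point competitor of strictly smaller distortion by reassigning the points near the gap so that the $J_1$/$L$ split is respected (moving the straddling point into $J_1$ and letting the cell to its right absorb the left part of $L$), the relevant local errors being read off from Corollary~\ref{cor1}, Lemma~\ref{lemma0001} and Corollary~\ref{cor2}; the resulting strict inequality contradicts optimality of $\ga_n$. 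By symmetry of $P$ about $\frac12$ this also excludes a quantizer in $(\frac35,\frac45)$. Finally, if $\ga_n\cap L=\es$, then by the previous step $\ga_n$ also avoids $(\frac15,\frac25)\cup(\frac35,\frac45)$, so every point of $L$ is at distance $\ge\frac15$ from $\ga_n$ and $V_n\ge\int_L(\tfrac15)^2\,dP=\frac1{75}\approx 0.0133>V_6>V_n$, a contradiction; this closes the induction.

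The main obstacle is the exclusion of gap quantizers. A zero-mass gap does not by itself forbid an optimal quantizer: a centroid can legitimately sit in a gap when the mass on its two sides is balanced, and one checks that merely relocating the straddling point within its own Voronoi cell cannot help, since that point is already the centroid of that cell --- so ruling out a straddler forces a genuinely global reassignment together with an appeal to the concrete masses and the self-similar geometry. Carrying this out in every straddling sub-case (according to where $a_{j-1}$ and $a_{j+1}$ fall, and how $a_j$'s cell cuts $J_1$ and $L$) is the ``technical and long'' bookkeeping --- the same the paper already omits for $n=4,5$; the rest is routine once the local integrals of this subsection are in hand.
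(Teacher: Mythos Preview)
Your overall strategy (upper bound on $V_n$, then lower bounds forcing contradictions) is the same as the paper's, but two choices diverge in ways that matter. First, the paper does \emph{not} use $V_6$ as its benchmark for $n\ge 7$; it exhibits a concrete seven-point set with distortion $\approx 0.000618$ and uses $V_n\le V_7\le 0.000618$. Second, the paper proves $\ga_n\cap L\ne\es$ \emph{before} excluding gap points, and then handles the gap case by direct distortion lower bounds (no competitor construction): if the rightmost $a_k<\frac25$ lies in $(\frac15,\frac25)$, then either $a_k\ge\frac3{10}$, which forces $a_{k-1}\le\frac1{10}$ and hence $V_n\ge\int_{S_1(T_2(L))\cup J_{12}}(x-\tfrac1{10})^2\,dP\approx 0.000742$, or $a_k\le\frac3{10}$, which forces $a_{k+1}>\frac12$ and hence $V_n\ge\int_{T_1(L)}(x-\tfrac12)^2\,dP\approx 0.000833$. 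Both numbers exceed $0.000618$, contradiction.

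Here is the genuine gap in your plan. Your benchmark $V_n<V_6\approx 0.000928$ is too coarse for the paper's direct estimates: the bounds $0.000742$ and $0.000833$ lie \emph{below} $0.000928$, so neither yields a contradiction against your inequality. You try to compensate by a competitor argument (``move the straddling point into $J_1$ and let the cell to its right absorb the left part of $L$''), but this is not merely bookkeeping: as you yourself note, relocating $a_j$ within its own cell cannot help, and the natural two-point reassignments (merge $a_{j-1}$'s cell with the $J_1$-part of $a_j$'s cell, or merge the $L$-part of $a_j$'s cell with $a_{j+1}$'s cell) trade a two-point approximation of one piece for a one-point approximation, which can \emph{increase} distortion. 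You have not specified any competitor that is guaranteed to beat $\ga_n$, so the gap-exclusion step is unproved. Relatedly, reversing the paper's order (gap exclusion before $\ga_n\cap L\ne\es$) deprives you of the information $a_{k+1}\ge\frac25$ that the paper uses in its Case~II bound.

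The fix is simple: tighten the benchmark to $V_7\le 0.000618$ via an explicit seven-point set, prove $\ga_n\cap L\ne\es$ first by the same two-subcase lower-bound method, and then run the paper's two gap subcases. No competitor is needed.
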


\begin{proof}By Proposition~\ref{prop2}, Lemma~\ref{lemma001}, Lemma~\ref{lemma002}, and Lemma~\ref{lemma003}, it follows that the proposition is true for $3\leq n\leq 6$. We now show that the proposition is true for all $n\geq 7$. Let $\ga_n:=\set{a_1, a_2, a_3, \cdots, a_n}$ be an optimal set of $n$-means for all $n\geq 7$. Since optimal quantizers are the centroids of their own Voronoi regions, without any loss of generality, we can assume that $0<a_1<a_2<a_3<\cdots <a_n<1$.
Consider the set of seven points \[\gb_7=\set{S_1(\frac {19}{90}), S_1(\frac {71}{90}), T_1(\frac 12), T_2(\frac 12), S_{21}(\frac 12), S_2(\frac 12), S_{22}(\frac12)}.\]
Then, $\int\mathop{\min}\limits_{a\in \gb_7}(x-a)^2 dP=\frac{10967}{17739000}. $
Since $V_n$ is the quantization error for $n$-means for all $n\geq 7$, we have $V_n\leq V_7\leq \frac{10967}{17739000}. $ Proceeding in the similar way as in the proof of Proposition~\ref{prop2}, we have $0<a_1<\frac 15$ and $\frac 45<a_n<1$  implying $\ga_n \ii J_1\neq\es$ and $\ga_n \ii J_2\neq \es$. We now show that $\ga_n \ii L\neq \es$. Suppose that $\ga_n \ii L= \es$. Let $j=\max\set{i : a_i< \frac 25 \te{ for all } 1\leq i\leq n}$, and then $a_j<\frac 25$. First, assume that $\frac 3{10}\leq a_j<\frac 25$.
Then, $\frac 12(a_{j-1}+a_j)<\frac 15$ implying $a_{j-1}<\frac 25-a_j\leq \frac 25-\frac 3{10}=\frac 1{10}$, and so
\begin{align*}
V_n\geq \int_{S_1(T_2(L))\uu S_1(J_2)}(x-\frac 1{10})^2 dP+\int_{T_1(L)}(x-\frac 25)^2dP+\int_{T_2(L)}(x-\frac 35)^2dP=0.00129756>V_n,
\end{align*}
which is a contradiction. Next, assume that $a_j\leq \frac 3{10}$. Then, notice that $T_{11}(\frac 35)=\frac{19}{45}<\frac 12(\frac 3{10}+\frac 35)=\frac 9{20};$ hence,
\begin{align*}
V_n&\geq \int_{T_{11}(L)}(x-\frac 3{10})^2 dP=\frac{67}{64800}>V_n,
\end{align*}
which leads to a contradiction. Thus, we can conclude that $\ga_n\ii L\neq \es$.
We now prove that $\ga_n$ does not contain any point from the open intervals $(\frac 15, \frac 25)$ and $(\frac 35, \frac 45)$. Suppose that $\ga_n$ contains a point from the open interval $(\frac 15, \frac 25)$. Let $k=\max\set {i : a_i<\frac 25 \te{ for all } 1\leq i\leq n}$, and then $a_k<\frac 25$.

Case~I. $\frac 3{10}\leq a_k<\frac 15$: Then, $\frac 12(a_{k-1}+a_{k})\leq \frac 15$ implies  $a_{k-1}\leq \frac 1{10}$ which yields
\[V_n\geq \int_{S_1(T_2(L))\uu S_1(J_2)}(x-\frac 1{10})^2 dP=\frac{13}{17520}>V_n,\ \text{which is a contradiction.} \]

Case~II. $\frac 15<a_k\leq \frac 3{10}$: Then, $\frac 12(a_k+a_{k+1})> \frac 25$ implies $a_{k+1}> \frac 12$ which yields
\[V_n\geq \int_{T_1(L)}(x-\frac 12)^2dP=\frac{1}{1200}>V_n,\ \text{which also leads to a contradiction.} \]

By Case~I and Case~II, we can assume that $\ga_n$ does not contain any point from the open interval $(\frac 15, \frac 25)$. Reflecting the situation with respect to $\frac 12$, we can also assume that $\ga_n$ does not contain any point from the open interval $(\frac 35, \frac 45)$.
\end{proof}

The following proposition gives a property of the $n$-th quantization error.

\begin{prop} \label{prop55}
Let $\ga$ be an optimal set of $n$-means for $P$, where $n\geq 3$. Set $\ga_1:=\ga\ii J_1$, $\ga_2:=\ga\ii J_2$ and $\ga_L:=\ga\ii L$. Let
 $n_1=\te{card}(\ga\ii J_1)$, $n_2=\te{card}(\ga\ii J_2)$ and $n_L=\te{card}(\ga\ii L)$. Then,
 \[V_n(P)=\frac 1{75} (V_{n_1}(P)+V_{n_2}(P))+\frac 13 V_{n_L}(\gn).\]
\end{prop}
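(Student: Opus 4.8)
The plan is to prove the stated equality as two opposite inequalities, exploiting the fact that for $n\geq 3$ an optimal set must split cleanly across the three pieces $J_1=[0,\frac15]$, $L=[\frac25,\frac35]$, $J_2=[\frac45,1]$. First I would record the ingredients: $\operatorname{supp}(P)=K\ci J_1\uu L\uu J_2$, $P(J_1)=P(J_2)=P(L)=\frac13$, $P|_L=\gn$, and — as a consequence of Lemma~\ref{lemma1}, the two omitted contributions vanishing because $\operatorname{supp} P\ci[0,1]$ and $\operatorname{supp}\gn\ci L$ — that $\int_{J_i}g\,dP=\frac13\int (g\circ S_i)\,dP$ and $\int_{L}g\,dP=\frac13\int g\,d\gn$ for every Borel $g\geq 0$; also $P$ is non-atomic (a routine iteration of \eqref{eq1}). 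Since $n\geq 3$, Proposition~\ref{prop003} gives that $\ga\ii J_1$, $\ga\ii L$, $\ga\ii J_2$ are all nonempty and that $\ga$ contains no point of $(\frac15,\frac25)$ or $(\frac35,\frac45)$; because each element of $\ga$ is the centroid of its Voronoi region (Proposition~\ref{prop10}(iii)) it lies in $[0,1]$, so $\ga=\ga_1\uu\ga_L\uu\ga_2$ is a disjoint union with $n_1,n_2,n_L\geq 1$ and $n_1+n_2+n_L=n$.

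For the lower bound the key is a \emph{locality} observation: since $\ga_1\ci[0,\frac15]$, $\ga_L\ci[\frac25,\frac35]$, $\ga_2\ci[\frac45,1]$ and consecutive pieces are $\frac15$ apart, for each $x\in J_1$ every point of $\ga_L\uu\ga_2$ is at distance at least $\frac15$ from $x$ whereas some point of $\ga_1$ is at distance at most $\frac15$ from $x$; hence the element of $\ga$ nearest to $x$ lies in $\ga_1$ for all $x\in J_1$ outside a finite set of endpoints, and likewise on $L$ and on $J_2$. Since $P$ is non-atomic these exceptional points are $P$-null, so
\[V_n(P)=V(P;\ga)=\int_{J_1}\min_{a\in\ga_1}(x-a)^2\,dP+\int_{L}\min_{a\in\ga_L}(x-a)^2\,dP+\int_{J_2}\min_{a\in\ga_2}(x-a)^2\,dP.\]
Using $\int_{J_1}g\,dP=\frac13\int(g\circ S_1)\,dP$ and the substitution $b=S_1^{-1}(a)$ turns the first term into $\frac13\cdot\frac1{25}\int\min_{b\in S_1^{-1}(\ga_1)}(x-b)^2\,dP\geq\frac1{75}V_{n_1}(P)$, since $S_1^{-1}(\ga_1)$ has $n_1$ elements; the third term is $\geq\frac1{75}V_{n_2}(P)$ in the same way, and the middle one equals $\frac13\int\min_{a\in\ga_L}(x-a)^2\,d\gn\geq\frac13 V_{n_L}(\gn)$. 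Summing gives $V_n(P)\geq\frac1{75}(V_{n_1}(P)+V_{n_2}(P))+\frac13 V_{n_L}(\gn)$.

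For the reverse inequality I would construct an $n$-point competitor: fix optimal sets $\gg_1\in\C C_{n_1}(P)$, $\gg_2\in\C C_{n_2}(P)$, $\gg_L\in\C C_{n_L}(\gn)$ and put $\gb:=S_1(\gg_1)\uu\gg_L\uu S_2(\gg_2)$, which has at most $n_1+n_L+n_2=n$ elements. Estimating $V(P;\gb)$ over each piece by discarding all points of $\gb$ lying outside the relevant scaled copy — which only raises the pointwise minimum — and using the same scaling identities, one gets $\int_{J_1}\min_{a\in\gb}(x-a)^2\,dP\leq\int_{J_1}\min_{a\in S_1(\gg_1)}(x-a)^2\,dP=\frac13\cdot\frac1{25}\int\min_{b\in\gg_1}(x-b)^2\,dP=\frac1{75}V_{n_1}(P)$, and similarly $\int_{J_2}\min_{a\in\gb}(x-a)^2\,dP\leq\frac1{75}V_{n_2}(P)$ and $\int_{L}\min_{a\in\gb}(x-a)^2\,dP\leq\frac13 V_{n_L}(\gn)$. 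Hence $V_n(P)\leq V(P;\gb)\leq\frac1{75}(V_{n_1}(P)+V_{n_2}(P))+\frac13 V_{n_L}(\gn)$, and combining with the lower bound yields the claimed identity.

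The one step I expect to require genuine care is the locality observation behind the lower bound — that, outside a $P$-null set, the element of $\ga$ closest to a point of a given piece actually belongs to that piece's share among $\ga_1,\ga_L,\ga_2$. This is precisely where Proposition~\ref{prop003} (no optimal quantizer in either gap), the positive separation $\operatorname{dist}(J_1,L)=\operatorname{dist}(L,J_2)=\frac15$, and the non-atomicity of $P$ at the boundary points $\frac15,\frac25,\frac35,\frac45$ are needed; the remainder is routine bookkeeping with the self-similar scaling of $P$ via Lemma~\ref{lemma1} and the identity $P|_L=\gn$.
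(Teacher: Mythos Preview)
Your proof is correct and follows essentially the same path as the paper's: Proposition~\ref{prop003} for the clean split of $\ga$ across $J_1,L,J_2$, then the scaling identities from Lemma~\ref{lemma1} (and $P|_L=\gn$) to reduce each piece. The only cosmetic difference is that the paper, instead of arguing via two opposite inequalities, directly invokes Lemma~\ref{lemma4} to assert that $S_i^{-1}(\ga_i)$ is an optimal $n_i$-set for $P$ and that $\ga_L$ is an optimal $n_L$-set for $\gn$, which collapses your $\geq$ and $\leq$ into a single line of equalities.
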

\begin{proof} By Proposition~\ref{prop003}, $\ga$ does not contain any point from the open intervals $(\frac 15, \frac 25)$ and $(\frac 35, \frac 45)$, and so $n=n_1+n_2+n_L$. By Lemma~\ref{lemma4}, we see that $S_1^{-1}(\ga_1)$ is an optimal set of $n_1$-means for $P$, and $S_2^{-1}(\ga_2)$ is an optimal set of $n_2$-means for $P$. In the similar way it can be seen that $\ga_L$ is an optimal set of $n_L$-means for $P|_L=\gn$, where $P|_L$ is the conditional probability measure of $P$ on $L$ as defined by \eqref{eq345}. Again, $P$-almost surely, the Voronoi region of any point in $\ga\ii J_i$ for $i=1, 2$, does not contain any point from the Voronoi region of any point in $\ga\ii L$, and vice versa. Thus,
\begin{align*}
V_n(P)&=\int_{J_1}\min_{a\in \ga_1}(x-a)^2 dP+\int_{L}\min_{a\in \ga_L}(x-a)^2 dP+\int_{J_2}\min_{a\in \ga_2}(x-a)^2 dP\\
&=\frac 1{75} \int\min_{a\in S_1^{-1}(\ga_1)}(x-a)^2 dP+\frac 13\int\min_{a\in \ga_L}(x-a)^2 dP+\frac 1{75} \int\min_{a\in S_2^{-1}(\ga_2)}(x-a)^2 dP\\
&=\frac 1{75} (V_{n_1}(P)+V_{n_2}(P))+\frac 13 V_{n_L}(\gn),
 \end{align*}
which completes the proof.
\end{proof}

\subsection{Canonical sequences and optimal quantization}

In this section, we first define the \tit{canonical sequences} $\set{a(n)}_{n\geq 1}$ and  $\set{F(n)}_{n\geq 1}$ associated with the condensation system, then calculate the optimal sets of $F(n)$-means, $F(n)$-th quantization error, quantization dimension and quantization coefficients.
\medskip

\noindent{\bf Observation.} Let $A$ be a Borel subset of $\D R$, $\ga_n $ be an optimal set of $n$-means for $P , $ and let $V (P; \ga_n, A)$ be  the quantization error contributed by $\ga_n$ on the set $A$ with respect to $P . $  Using Proposition~\ref{prop00021}, Proposition~\ref{prop003}, and Proposition~\ref{prop55}, it follows that if
\[V(P; \ga_n, J_i)\geq \max\set{V(P; \ga_n, J_j), V(P; L)},\]
for $1\leq i\neq j\leq 2$, then
$\te{card}(\ga_{n+1}\ii J_i)=\te{card}(\ga_n\ii J_i)+1, \, \te{card}(\ga_{n+1}\ii L)=\te{card}(\ga_n\ii L), \te{ and } \te{card}(\ga_{n+1}\ii J_j)=\te{card}(\ga_n\ii J_j);\ \text{and if}$
\[V(P; \ga_n, L)\geq \max\set{V(P; \ga_n, J_i) : 1\leq i\leq 2},\]
then
$\te{card}(\ga_{n+1}\ii J_i)=\te{card}(\ga_n\ii J_i) \te{ for } i=1, 2, \te{ and } \te{card}(\ga_{n+1}\ii L)=\te{card}(\ga_n\ii L)+1.$

In order to avoid routine technicalities, we omit the justification of the Observation.  Notice that the Observation enables one to construct optimal sets $ \ga_n ,\ n\geq 3, $ beginning with the optimal set of three-means.  For example, by Proposition~\ref{prop2}, the set $\ga_3:=\set{S_1(\frac 12), \frac 12, S_2(\frac 12)}$ is an optimal set of three-means which contains one element from each of $J_1$, $L$, and $J_2$. Since $V(P; \ga_3, L)=\frac 13 W>\frac 1{75}V=V(P: \ga_3, J_1)=V(P; \ga_3, J_2)$, by the Observation above, an optimal set $\ga_4$ of four-means must contain two elements from $L$, and one element from each of $J_1$ and $J_2$ yielding $\ga_4=\set{S_1(\frac 12), T_1(\frac 12), T_2(\frac 12), S_2(\frac 12)}$, which is justified by Proposition~\ref{prop3}. Once $\ga_4$ is known, similarly, one calculates an optimal set $\ga_5$ of five-means, and so forth.

Having optimal sets of $n$-means for $1\leq n \leq 6 , $ and utilizing the Observation, we define the canonical sequences that are useful in calculating the optimal sets of $n$-means and the $n$th quantization errors for all $n\in \D N$.


\begin{defi} \label{defi3}  The sequences $\set{a(n)}_{n\geq 1}$ and $\set{F(n)}_{n\geq 1}, $ where
$$ \aligned & a(n)=\frac{1}{4} \left(6 n+(-1)^{n+1}-7\right), \ \text{and}\\
& F(n)=\left\{\begin{array} {ll}
2^n(n+1) & \te{ if } 1\leq n\leq 4,\\
5\cdot 2^n+2^{n-\frac 74} \sum_{k=5}^n 2^{\frac k 2+\frac{(-1)^{k+1}}{4}} & \text{ if } n\geq 5,
\end{array}\right. \endaligned $$
are the canonical sequences associated with the condensation system $(\mathcal{S}, \bold{p}, \nu). $
\end{defi}

\begin{remark} First several terms of the sequences $\{a(n)\}$ and $\{F(n)\}$ are
$$ \aligned & \set{a(n)}_{n\geq 1}=\{0,1,3,4,6,7,9,10,12,13,15,16,18,19,21, \cdots\},  \\
& \set{F(n)}_{n\geq 1}=\{4, 12, 32, 80, 224, 576,1664,4352,12800, 33792, \cdots \}. \endaligned $$
Also, it is easy to see that $a(2k+1)=3k$ and $a(2k)=3k-2$ for all $k\geq 1$.
\end{remark}

\begin{lemma} \label{lemma56}
For any positive integer $k\geq 1$, we have
\begin{itemize}

\item[(i)] $F(2k+1)=2^{2k+1}(1+3 \cdot 2^{k-1})=2\cdot 2^{2k}+3 \cdot 2^{3k}$, and
\item[(ii)] $F(2k)=2^{2k}(1+2^k)=2^{2k}+2^{3k}$.
\end{itemize}
In fact, for any $n\geq 3$, we have $F(n)=2^{a(n)}+2F(n-1)$, and if $n=2$, then $F(2)=2^2+2F(1)$.
\end{lemma}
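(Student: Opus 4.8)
The statement is the recurrence $F(n) = 2^{a(n)} + 2F(n-1)$ for $n\geq 3$, together with $F(2) = 2^2 + 2F(1)$, and the two closed forms for $F(2k+1)$ and $F(2k)$. The plan is to first establish the closed forms (i) and (ii) directly from Definition~\ref{defi3}, and then deduce the recurrence from them.

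\textbf{Proof proposal.} What has to be shown is the pair of closed forms (i), (ii) together with the recursion $F(n)=2^{a(n)}+2F(n-1)$ for $n\ge 3$ (and $F(2)=2^2+2F(1)$ as a separate base case, since $a(2)=1$ but the exponent occurring there is $2$). I would first record, straight from Definition~\ref{defi3}, that $a(2k+1)=\frac14(12k+6+1-7)=3k$ and $a(2k)=\frac14(12k-1-7)=3k-2$ (this is the preceding remark, so it may just be cited). The plan is then to prove the recursion \emph{first} and deduce (i), (ii) from it by induction; the alternative — evaluating the defining sum in closed form and then reading off the recursion algebraically — works equally well and I would mention it as a sanity check.

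For the recursion: the cases $n\in\{2,3,4,5\}$ are a finite arithmetic check against the listed values $F(1)=4,\ F(2)=12,\ F(3)=32,\ F(4)=80,\ F(5)=224$ (one must do $n=5$ by hand, because passing from the ``$n\le 4$'' branch of the definition to the ``$n\ge 5$'' branch is not covered by a single formula; here $S(5)=2^{5-7/4}\cdot 2^{5/2+1/4}=2^{6}=64$, so $F(5)=5\cdot 2^5+64=224=2^{a(5)}+2F(4)$). For $n\ge 6$ both $F(n)$ and $F(n-1)$ are given by the closed branch $F(m)=5\cdot 2^{m}+S(m)$ with $S(m):=2^{m-7/4}\sum_{k=5}^{m}2^{k/2+(-1)^{k+1}/4}$, and the two sums telescope down to the single last term, so
\[
F(n)-2F(n-1)=\big(5\cdot 2^{n}-5\cdot 2^{n}\big)+\big(S(n)-2S(n-1)\big)=2^{\,n-\frac{7}{4}}\cdot 2^{\,\frac{n}{2}+\frac{(-1)^{n+1}}{4}}=2^{\,\frac{3n}{2}-\frac{7}{4}+\frac{(-1)^{n+1}}{4}}.
\]
Evaluating the exponent $\tfrac{3n}{2}-\tfrac74+\tfrac{(-1)^{n+1}}{4}$ separately for $n=2k$ and $n=2k+1$ yields $3k-2=a(2k)$ and $3k=a(2k+1)$ respectively, which is exactly the claim $F(n)-2F(n-1)=2^{a(n)}$.

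Finally, (i) and (ii) follow from the recursion by a two-step induction that alternates between the even and odd forms: the base case is $F(2)=12=2^{2}+2^{3}$ (which is (ii) at $k=1$); granting (ii) at $k$ one gets $F(2k+1)=2^{3k}+2F(2k)=2^{3k}+2(2^{2k}+2^{3k})=2\cdot 2^{2k}+3\cdot 2^{3k}$, which is (i) at $k$; and granting (i) at $k-1$ one gets $F(2k)=2^{3k-2}+2F(2k-1)=2^{3k-2}+2(2\cdot 2^{2k-2}+3\cdot 2^{3k-3})=2^{2k}+2^{3k}$, which is (ii) at $k$. (As the alternative route: the summand $t_k:=2^{k/2+(-1)^{k+1}/4}$ satisfies $t_{k+1}=t_k$ for $k$ odd and $t_{k+1}=2t_k$ for $k$ even, so its terms come in equal consecutive pairs that double every two indices; summing the resulting geometric series and multiplying by $2^{n-7/4}$ gives $S(n)$ explicitly, and adding $5\cdot 2^n$ recovers (i) and (ii) after routine simplification.)

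The only mildly delicate point is the bookkeeping with the quarter-integer exponents and the off-by-one between the two branches of the definition of $F$ near $n=4,5$; there is no genuine conceptual obstacle, and everything past the setup is elementary algebra.
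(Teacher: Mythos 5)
Your proof is correct, but it runs in the opposite order from the paper's. The paper first establishes the closed forms (i) and (ii) by directly evaluating the defining sum: for $n=2k+1$ it splits $\sum_{\ell=5}^{2k+1}2^{\ell/2+(-1)^{\ell+1}/4}$ into the odd- and even-index subsums, each a geometric series, and sums them; for $n=2k$ it uses $F(2k)=\tfrac12 F(2k+1)-2^{3k-1}$; only then does it verify $F(n)=2^{a(n)}+2F(n-1)$ by plugging the closed forms in. You instead prove the recursion first, observing that $F(n)-2F(n-1)$ telescopes to the single last term of the sum, $2^{3n/2-7/4+(-1)^{n+1}/4}=2^{a(n)}$, and then obtain (i) and (ii) by a two-step alternating induction. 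Both arguments are sound; the telescoping route is arguably tidier because it sidesteps the geometric-series bookkeeping entirely and makes the recursion — which is what actually gets used downstream — the primary object. You are also more careful than the paper about the seam at $n=5$, where $F(n)$ and $F(n-1)$ fall in different branches of the piecewise definition and the telescoping identity must be checked by hand; the paper dismisses the small cases with ``clearly true.'' One small caution: you reuse the letter $S$ for the partial sum $S(m)$, which collides with the paper's later use of $S(\ell)$ for the sets $\uu_{\go\in I^{n-\ell}}S_\go(\ga_{2^{a(\ell)}}(\gn))$; pick a different symbol to avoid confusion if this were inserted into the text.
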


\begin{proof} Notice that if $k=1$ and $k=2$, then the relations given by  $F(2k+1)$ and $F(2k)$ are clearly true. Let $n=2k+1$ for some positive integer $k\geq 2$. Then,
\begin{align*}
&F(2k+1)=5\cdot 2^{2k+1}+2^{2k+1-\frac 74}\sum_{\ell=5}^{2k+1}2^{\frac \ell 2+\frac{(-1)^{\ell+1}}{4}}\\
&=5\cdot 2^{2k+1}+2^{2k-\frac 34}\Big((2^{\frac 52+\frac 14}+  2^{\frac 72+\frac 14}+\cdots+2^{\frac {2k+1}{2}+\frac 14}) +(2^{\frac 62-\frac 14}
+  2^{\frac 82-\frac 14}+\cdots+2^{\frac {2k}{2}-\frac 14}) \Big)  \\
&=5\cdot 2^{2k+1}+2^{2k-\frac 34}\Big(2^{\frac {11}{4}}(1+2+2^2+\cdots +2^{k-2})+ 2^{\frac {11}{4}}(1+2+2^2+\cdots +2^{k-3})\Big)  \\
&=5\cdot 2^{2k+1}+2^{2k-\frac 34}2^{\frac {11}{4}}(2^{k-1}-1+ 2^{k-2}-1)  =5\cdot 2^{2k+1}+2^{2k+2}(3\cdot 2^{k-2}-2)\\
&=2^{2k+1}(1+3\cdot 2^{k-1}).
\end{align*}
Let $n=2k$ for some positive integer $k\geq 3$. Then,
\begin{align*}
&F(2k)=5\cdot 2^{2k}+2^{2k-\frac 74}\sum_{\ell=5}^{2k}2^{\frac \ell 2+\frac{(-1)^{\ell+1}}{4}}\\
&=\frac 12\Big(5\cdot 2^{2k+1}+2^{2k+1-\frac 74}\sum_{\ell=5}^{2k+1}2^{\frac \ell 2+\frac{(-1)^{\ell+1}}{4}} -2^{2k+1-\frac 74}\cdot 2^{\frac{2k+1}{2}+\frac 14}\Big)=\frac 12 F(2k+1)-2^{3k-1}\\
&=2^{2k}(1+3\cdot 2^{k-1})-2^{3k-1}=2^{2k}(1+2^k).
\end{align*}
To prove $F(n)=2^{a(n)}+2F(n-1)$ for $n\geq 3$, we proceed as follows: Let $n=2k+1$ for some $k\geq 1$. Then,
$F(n)=F(2k+1)=2\cdot 2^{2k}+3 \cdot 2^{3k}=2^{3k}+2(2^{2k}+2^{3k})=2^{a(2k+1)}+2 F(2k)=2^{a(n)}+2F(n-1)$. Let $n=2k$ for some $k\geq 2$. Then, $F(n)=F(2k)=2^{2k}+ 2^{3k}=2^{3k-2}+2^{2k}+3 \cdot 2^{3k-2}=2^{a(2k)}+2F(2k-1)=2^{a(n)}+2F(n-1)$. If $n=2$, then $F(2)=2^2+2F(1)$ directly follows from the definition.
\end{proof}

For $n\in \D N$, we will denote the sequence of sets
$ \ga_{2^{a(n)}}(\gn), \, \uu_{\go \in I} S_\go(\ga_{2^{a(n-1)}}(\gn)), \, \\ \uu_{\go \in I^2} S_\go(\ga_{2^{a(n-2)}}(\gn)), \,  \cdots, \, \uu_{\go \in I^{n-4}} S_\go(\ga_{2^{a(4)}}(\gn)), \, \uu_{ \go \in I^{n-3}} S_\go(\ga_{2^{a(3)}}(\gn)), \, \uu_{\go \in I^{n-2}} S_\go(\ga_{2^2}(\gn)), \, \\ \uu_{\go \in I^{n-1}} S_\go(\ga_{2}(\gn)), \, \set{S_\go(\frac 12) : \go \in I^{n}}$ by $S(a(n))$, $S(a(n-1))$, $S(a(n-2))$,  $\cdots$, $S(a(4))$, $S(a(3))$, $S(2)$, $S(1)$, and $S(0)$, respectively.

Also, for $1\leq \ell\leq 2$, write
\[S^{(2)}(\ell):=\uu_{\go \in I^{n-\ell}} S_\go(\ga_{2^{\ell+1}}(\gn)) \te{ and } S^{(2)(2)}(\ell):=\uu_{\go \in I^{n-\ell}} S_\go(\ga_{2^{\ell+2}}(\gn)), \]  and for $3\leq \ell\leq n$, write
\[S^{(2)}(a(\ell)):=\uu_{\go \in I^{n-\ell}} S_\go(\ga_{2^{a(\ell)+1}}(\gn)) \te{ and } S^{(2)(2)}(a(\ell)):=\uu_{\go \in I^{n-\ell}} S_\go(\ga_{2^{a(\ell)+2}}(\gn)). \]
Further, we write
$S^{(2)}(0):=\set{S_\go(\ga_2(P)) : \go \in I^{n}}=\set{S_\go(\frac {19}{90}), S_\go(\frac{71}{90}) : \go \in I^n}$,
and $S^{(2)(2)}(0)=\uu_{\go \in I^{n}} S_\go(\ga_{2}(\gn))\uu \set{S_\go(\frac 12) : \go \in I^{n+1}}.$ Moreover, for any $\ell\in \D N\uu\set{0}$, if $A:=S(i)$, we identify $S^{(2)}(i)$ and $S^{(2)(2)}(i)$, respectively, by $A^{(2)}$ and $A^{(2)(2)}$; and if $A:=S(a(i))$, we identify $S^{(2)}(a(i))$ and $S^{(2)(2)}(a(i))$, respectively, by $A^{(2)}$ and $A^{(2)(2)}$. Set
\[\ga_{F(n)}:=\ga_{F(n)}(P)=\left\{\begin{array} {ll}
S(1)\uu S(0)  & \te{ if } n=1,\\
S(2)\uu S(1)\uu S(0) & \te{ if } n=2,\\
\left(\mathop{\uu}\limits_{\ell=3}^n S(a(\ell))\right)\uu S(2)\uu S(1)\uu S(0) & \text{ if } n\geq 3.
\end{array}\right.\]
For any real number $x$, let $\lfloor x\rfloor$ denote the greatest integer not exceeding $x$.
For $n\in \D N$, $n\geq 1$, set
\begin{align*} SF(n):& =\set{S(0), S(1), S(2), S(a(3)), S(a(4)), \cdots, S(a(n))}, \\
SF^{(1)}(n):&=\set{S(0), S(4), S(6), \cdots, S(a(2\lfloor \frac n 2\rfloor))},\\
SF^{(2)}(n):&=SF(n)\setminus SF^{(1)}(n).
\end{align*}
In addition, write
\begin{align} \label{eq35} SF^\ast(n): =\set{S(0), S(1), S(2), S(a(3)), & S(a(4)), \cdots, S(a(n)), S^{(2)}(0), S^{(2)}(a(4)), \\
 & S^{(2)} (a(6)), \cdots, S^{(2)}(a(2\lfloor \frac n 2\rfloor))}\notag.
\end{align}
For any element $a\in A \in SF^\ast(n)$, by the Voronoi region of $a$ it is meant the Voronoi region of $a$ with respect to the set $\uu_{B\in SF^\ast(n)} B$.
Similarly, for any $a\in A\in SF(n)$, by the Voronoi region of $a$ it is meant the Voronoi region of $a$ with respect to the set $\uu_{B\in SF(n)} B$. Notice that if $a, b\in A$, where $A \in SF(n)$ or $A\in SF^\ast(n)$, the error contributed by $a$ in the Voronoi region of $a$ is equal to the error contributed by $b$ in the Voronoi region of $b$.  Let us now define an order $\succ$ on the set $SF^\ast(n)$ as follows: For $A, B\in SF^\ast(n)$ by $A \succ B$ it is meant that the error contributed by any element $a\in A$ in the Voronoi region of $a$ is larger than the error contributed by any element $b \in B$ in the Voronoi region of $b$. Similarly, we define the order relation $ \succ $ on the set $SF(n)$.

\begin{lemma} For a given positive integer $n\geq 4$, let $S(a(4)) \in SF^\ast(n)$. Then, $S(a(4)) \succ B$ for any $B \in SF^\ast(n)\setminus \set{S(a(4))}$.
\end{lemma}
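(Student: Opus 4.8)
The plan is to compare the per-point error contributed by an element of $S(a(4))$ against the per-point error of an element of every other set appearing in $SF^\ast(n)$, and to verify the largest of these latter errors is strictly smaller. First I would record, using the ``Note'' around equations~\eqref{eq4} and~\eqref{eq40} together with Proposition~\ref{prop0}, an explicit expression for the error contributed by a single point of each set: for $S(a(\ell))$ the relevant quantity is $\frac{1}{3^{\ell+1}}\cdot\frac{1}{25^{n-\ell}}\cdot\frac{1}{2^{a(\ell)}}V_{2^{a(\ell)}}(\gn)$ (the ``$\frac13$'' because these points quantize the condensation piece $\gn$ sitting at level $\ell$, and the $25^{-(n-\ell)}$ because of the $n-\ell$ outer applications of $S_1$ or $S_2$), while for $S(2)$, $S(1)$ the analogous quantities come from $V_2(P)$, and for $S(0)$ from $\frac12 V_2(P)$ scaled by $75^{-n}$ via Lemma~\ref{lemma0}; the starred sets $S^{(2)}(a(2j))$ contribute the error of one point of an optimal $2^{a(2j)+1}$-means set for $\gn$, again with the appropriate $3^{-(2j+1)}25^{-(n-2j)}$ prefactor. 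Since all sets carry a common factor $25^{-n}$ (or can be normalized to do so), the comparison reduces to comparing $\ell$-indexed scalars that do not depend on $n$.

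Next I would reduce to a finite check. Because $D(\gn)=\frac{\log 2}{\log 3}$, Proposition~\ref{prop0} gives $V_{2^{m}}(\gn)=W\cdot 18^{-m}$ (taking $n=2^m$, so $\ell(n)=m$ and the bracket equals $2^m$), hence the per-point error of $S(a(\ell))$ is proportional to $25^{\ell}\cdot 3^{-\ell-1}\cdot 2^{-a(\ell)}\cdot W\,18^{-a(\ell)} = \frac{W}{3}\left(\frac{25}{3}\right)^{\ell}\cdot 36^{-a(\ell)}$. Using $a(2k+1)=3k$, $a(2k)=3k-2$ from the Remark, this is a fixed geometric-type expression in $k$; I would show it is maximized (over $\ell\ge 3$) at $\ell=4$ by checking monotonicity of the ratios of consecutive terms (separately along the even and odd subsequences, and across the even/odd boundary near $\ell=3,4,5$), which is a one-line inequality once the exponents are written out. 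I would then separately compare the $\ell=4$ value against the $S(2),S(1),S(0)$ values and against the starred values $S^{(2)}(a(2j))$ — the latter involve $V_{2^{a(2j)+1}}(\gn)$, which by Proposition~\ref{prop0} equals $\frac{W}{18^{a(2j)}}\cdot\frac{1}{2}(2^{a(2j)+1}\cdot\frac19 + \cdots)$; in any case each is a concrete rational multiple of $W\,25^{-n}$, so the whole statement is a finite list of numerical inequalities.

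The key steps in order: (1) write the normalized per-point error $\E(A)$ for each $A\in SF^\ast(n)$ as an explicit $n$-independent scalar times $25^{-n}$, invoking \eqref{eq4}, \eqref{eq40}, Lemma~\ref{lemma0}, and Proposition~\ref{prop0}; (2) specialize $V_{2^m}(\gn)=W\,18^{-m}$ and substitute $a(2k+1)=3k$, $a(2k)=3k-2$ to get closed forms; (3) show $\ell\mapsto\E(S(a(\ell)))$ is strictly decreasing for $\ell\ge 4$ and that $\E(S(a(3)))<\E(S(a(4)))$, reducing the maximum over the unstarred part of $SF(n)\setminus\{S(a(4))\}$ to finitely many comparisons with $S(2),S(1),S(0)$; (4) bound each starred term $\E(S^{(2)}(a(2j)))$ and check it is $<\E(S(a(4)))$, which follows because $S^{(2)}(a(2j))$ uses strictly more quantizers on the same scaled copy of $\gn$ than $S(a(2j))$ does and hence contributes strictly less error, combined with step (3). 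The main obstacle I expect is step (3)–(4): organizing the two interleaved geometric subsequences (even vs.\ odd $\ell$) so that the claim ``the maximum is at $\ell=4$'' is transparent rather than a messy case split, and making sure the comparison with the starred sets and with $S(0)$ (which carries the anomalous $75^{-n}$ scaling and the factor $\frac12 V_2$) is handled without sign errors; everything else is bookkeeping with the already-established formulas.
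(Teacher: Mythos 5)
Your overall strategy coincides with the paper's: express the error contributed by a single element of each $A\in SF^\ast(n)$ in its Voronoi region, observe that after factoring out the common $n$-dependence the comparison becomes a family of $n$-independent scalar inequalities, and verify that the $S(a(4))$ term dominates. The paper does exactly this, with the per-point error of an element of $S(a(\ell))$ equal to $\frac{1}{75^{n-\ell}}\cdot\frac13\cdot\frac{W}{18^{a(\ell)}}$ and the decisive inequality $\frac{18^{a(\ell)-a(4)}}{75^{\ell-4}}>1$ checked separately along even and odd $\ell$ via $\frac{18^3}{75^2}>1$. Your step (4) observation --- that $S^{(2)}(a(2j))$ refines $S(a(2j))$ on the same scaled copies of $\gn$ and hence has strictly smaller per-point error, so the starred comparisons reduce to the unstarred ones --- is a legitimate small shortcut over the paper's direct computation.

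However, the formulas you propose to feed into this scheme are wrong in ways that would make the verification fail rather than succeed. First, Proposition~\ref{prop0} with $n=2^m$ gives $V_{2^m}(\gn)=\frac{W}{18^m}\cdot 2^m=\frac{W}{9^m}$, not $W\cdot 18^{-m}$; dividing the total by the $2^{a(\ell)}$ points then yields a per-point error $W\cdot 18^{-a(\ell)}$, whereas you in effect divide by $2^{a(\ell)}$ twice and obtain $W\cdot 36^{-a(\ell)}$. Second, the measure prefactor for points lying in $L_\go$ with $|\go|=n-\ell$ is $P(L_\go)=3^{-(n-\ell)-1}$, not $3^{-(\ell+1)}$; the correct normalized scalar is therefore $\frac{W}{3}\,75^{\ell}\,18^{-a(\ell)}$ (after dividing out $75^{-n}$), not $\frac{W}{3}\left(\frac{25}{3}\right)^{\ell}36^{-a(\ell)}$. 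These slips are not harmless: your expression differs from the correct one by the $\ell$-dependent factor $3^{n-2\ell}2^{-a(\ell)}$, and with your closed form the ratio of the $\ell=4$ term to the $\ell=3$ term is $\frac{25}{108}<1$, so your monotonicity check in step (3) would locate the maximum at $\ell=3$ and thereby refute the lemma, whereas the correct ratio is $\frac{75}{18}>1$. Third, you misattribute the per-point errors of the exceptional sets: $S(1)$ and $S(2)$ are quantizers of scaled copies of $\gn$ (their errors are governed by $V_2(\gn)$ and $V_4(\gn)$), not of $P$; the per-point error of $S(0)$ is $75^{-n}V$ (the variance of $P$), while it is $S^{(2)}(0)$ whose per-point error is $75^{-n}\cdot\frac12 V_2$ by Lemma~\ref{lemma0}. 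The plan is salvageable, but only after these corrections.
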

\begin{proof} We first prove that $S(a(4)) \succ S(a(\ell))$ for $5\leq \ell\leq n$. In order to that, for $5\leq \ell\leq n$, we first prove the following inequality:
 \begin{equation} \label{eq61}
\frac{18^{a(\ell)-a(4)}}{75^{\ell-4}}>1.
 \end{equation}
  First, assume that $\ell=2k+1$ for some $k\geq 2$. Then,
$
\frac{18^{a(\ell)-a(4)}}{75^{\ell-4}}=\frac{18^{a(2k+1)-a(4)}}{75^{2k+1-4}}=\frac{18^{3k-4}}{75^{2k-3}}=\Big(\frac{18^3}{75^2}\Big)^{(k-1)} \frac {75}{18}>1.
$
Next, assume that $\ell=2k$ for some $k\geq 3$. Then, $
\frac{18^{a(\ell)-a(4)}}{75^{\ell-4}}=\frac{18^{a(2k)-a(4)}}{75^{2k-4}}=\frac{18^{3k-2-4}}{75^{2k-4}}=\Big(\frac{18^3}{75^2}\Big)^{(k-2)}>1.
$
Thus, the inequality is proved. The distortion error due to any element from the set $S(a(\ell))$ is given by $\frac 1{75^{n-\ell}}\frac 13 \frac 1{18^{a(\ell)}} W$. On the other hand, the distortion error due to any element from the set $S(a(4))$ is given by $\frac 1{75^{n-4}}\frac 13 \frac 1{18^{a(4)}} W$. Thus, $\frac 1{75^{n-4}}\frac 13 \frac 1{18^{a(4)}} W>\frac 1{75^{n-\ell}}\frac 13 \frac 1{18^{a(\ell)}} W$ is true if $\frac{18^{a(\ell)-a(4)}}{75^{\ell-4}}>1$, which is clearly true by \eqref{eq61}. Next, take $2\leq \ell \leq \lfloor \frac n 2\rfloor$. Then, the distortion error due to the set $S^{(2)}(a(2\ell))$ is given by $\frac 1{75^{n-2\ell}} \frac 13 \frac 1{18^{a(2\ell)+1}}W=\frac 1{75^{n-2\ell}}\frac 13 \frac 1{18^{3\ell-1}}W$. Thus, $\frac 1{75^{n-4}}\frac 13 \frac 1{18^{a(4)}} W>\frac 1{75^{n-2\ell}}\frac 13 \frac 1{18^{3\ell-1}}W$ is true if $\frac{18^{3\ell-5}}{75^{2\ell-4}}>1$, i.e., if $(\frac{18^3}{75^2})^{(\ell-2)} 18>1$, which is clearly true as $\ell\geq 2$. Similarly, we can show that $S(a(4)) \succ S^{(2)}(0)$.
Now, take $1\leq \ell\leq 3$. Then,  $S(a(4)) \succ S(a(\ell))$ is true since
$\frac 1{75^{n-4}} \frac 13 \frac 1{18^{a(4)}}W>\frac 1{75^{n-\ell}} \frac 13 \frac 1{18^{\ell}} W$. Similarly, $S(a(4)) \succ S(0)$. Thus, the assertion follows.
\end{proof}

\begin{lemma} \label{lemma19}  Let $0\leq k\leq n$. Then,
\begin{itemize}
\item[(i)] $S(a(2k))\succ S(a(2k+2))$ for all $k\geq 2$.
\item[(ii)] $S(a(82))\succ S(a(2k+1))\succ S(a(2k+82)) \succ S(a(2k+3))$ for all $k\geq 1$.
\item[(iii)] $S(a(83)) \succ S^{(2)}(a(2k)) \succ S(a(2k+160)) \succ S(a(2k+81))  \succ S^{(2)}(a(2k+2))$ for all $k\geq 2$.
\item[(iv)] $S(a(7))\succ  S(0) \succ S(a(88)) \succ S(a(9))$,  $S(a(81)) \succ S(2) \succ S(a(162)) \succ S(a(83))$, $S(a(121)) \succ S^{(2)}(a(42)) \succ S(a(202)) \succ S^{(2)}(0) \succ S(a(123)) \succ S^{(2)}(a(44))$, and \\ $\ S^{(2)} (a(80)) \succ S(a(240)) \succ S(1) \succ S(a(161)) \succ S^{(2)} (a(82))$.
\end{itemize}
\end{lemma}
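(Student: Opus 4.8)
\textbf{Proof proposal for Lemma~\ref{lemma19}.}

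The plan is to reduce every assertion in (i)--(iv) to a single algebraic inequality comparing two ``distortion weights'' of the form $w(A) := \frac{1}{75^{\,n-j}}\,\frac 13\,\frac{1}{18^{\,m}}\,W$, where $j$ is the ``level'' (the number of symbols $\go$ one concatenates in front) and $m$ is the ``internal exponent'' (the power of $18$ coming from the optimal quantizer count inside a copy of $\gn$). Exactly as in the preceding lemma, for a set of the form $S(a(\ell))$ we have $j=\ell$ and $m=a(\ell)$, so $w(S(a(\ell)))=\frac{1}{75^{\,n-\ell}}\,\frac 13\,\frac{1}{18^{a(\ell)}}W$; for a doubled set $S^{(2)}(a(2\ell))$ we have $j=2\ell$ and $m=a(2\ell)+1=3\ell-1$; for $S(0),S(1),S(2)$ the internal exponents are $0,1,2$ with levels $n,n-1,n-2$; and for $S^{(2)}(0)$ one reads off $j=n$, $m=1$ from the definition. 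Then $A\succ B$ is exactly $w(A)>w(B)$, which after cancelling the common $\frac{W}{3}$ and $75^{\,-n},18^{\,0}$ factors becomes an inequality of the shape
\[
\frac{18^{\,m(B)-m(A)}}{75^{\,j(B)-j(A)}} \;>\; 1 ,
\]
identical in form to \eqref{eq61}. So the whole lemma is a bookkeeping exercise: for each claimed link $A\succ B$, compute $\Delta m = m(B)-m(A)$ and $\Delta j = j(B)-j(A)$ and verify $18^{\Delta m} > 75^{\Delta j}$, equivalently $\Delta m \log 18 > \Delta j \log 75$, i.e. $\Delta m / \Delta j$ sits on the correct side of $\log 75/\log 18 \approx 1.493$ (being careful about the sign of $\Delta j$).

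Concretely I would organize it as follows. First I would record the three arithmetic facts $a(2k+1)=3k$, $a(2k)=3k-2$ (already in the preceding remark) and the value $\log 75/\log 18\approx 1.4929$; I will also use repeatedly that $18^3=5832>5625=75^2$, so $(18^3/75^2)^t$ is increasing in $t$. For (i): $S(a(2k))$ vs $S(a(2k+2))$ gives $\Delta j = 2$, $\Delta m = a(2k+2)-a(2k)=3$, and the inequality is $(18^3/75^2)^{k-2}\cdot 18>1$ for $k\ge 2$ after the bound \eqref{eq61}-style manipulation; this is the same computation already done for $S(a(4))\succ S(a(2\ell))$. For (ii) the chain has four terms; I would check the three consecutive comparisons $S(a(82))\succ S(a(2k+1))$ (here for $k\le 40$ the level difference and exponent difference are both favorable and for $k\ge 41$ one uses that $S(a(82))\succ S(a(2k+82))\succ S(a(2k+1))$ wait---rather I verify each adjacent pair directly), $S(a(2k+1))\succ S(a(2k+82))$, and $S(a(2k+82))\succ S(a(2k+3))$, each reducing to an inequality $18^{\Delta m}>75^{\Delta j}$ with the specific $\Delta m=3\cdot(\text{index gap})$ or $\pm$ a small correction coming from the parity formula for $a$. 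For (iii) the presence of the doubled sets $S^{(2)}(a(2k))$ only changes $m$ by $+1$ and pins $j$ to be even, so again four adjacent comparisons of the same type. For (iv) these are finitely many fixed comparisons (e.g. $S(a(7))\succ S(0)$ is $\Delta j = n-7$, $\Delta m = a(7)-0 = 9$, inequality $18^{9}>75^{\,n-7}$ --- true once $n-7\le \lfloor 9\log18/\log75\rfloor$, and the reverse-looking one $S(0)\succ S(a(88))$ is $\Delta j = 88-n$, $\Delta m = 0 - 9 = -9$ requiring $75^{\,n-88}>18^{9}$ when $n\ge 88$) --- so I just tabulate the four groups of chained inequalities and discharge each link.

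The one genuine subtlety --- the ``hard part'' --- is that these are \emph{not} comparisons valid for all $n$: a statement like $S(a(7))\succ S(0)$ only makes sense, and is only being asserted, when both sets actually belong to $SF^\ast(n)$, which forces $n$ to lie in a certain range, and for the chained inequalities of (ii)--(iv) one must confirm that for every admissible $n$ the relevant indices ($2k+82$, $2\lfloor n/2\rfloor$, etc.) do occur in $SF^\ast(n)$ and that the direction of the inequality is uniform over that range. So the real content is checking the endpoint cases: the ``$\succ$'' relations are tightest exactly at the smallest/largest admissible $n$, and one must make sure the monotonicity in $(18^3/75^2)^{t}$ pushes the inequality the right way across the whole interval. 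I expect the calculation itself to be entirely routine once the reduction to $18^{\Delta m}$ vs $75^{\Delta j}$ is in place; essentially all the work is in correctly reading off $(\Delta j,\Delta m)$ from the definitions of $S(a(\ell))$, $S^{(2)}(a(2\ell))$, $S(0),S(1),S(2),S^{(2)}(0)$ and then invoking the same $\frac{18^{\Delta m}}{75^{\Delta j}}>1$ argument used for \eqref{eq61}.
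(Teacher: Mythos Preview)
Your reduction to comparing ``distortion weights'' and verifying inequalities of the form $18^{\Delta m}>75^{\Delta j}$ is exactly the paper's approach, and for parts (i)--(iii) your outline is correct and matches the paper essentially line for line.

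There is, however, a genuine gap in your treatment of part (iv). The sets $S(0)$ and $S^{(2)}(0)$ do \emph{not} have per-element error of the form $\frac{1}{75^{\,n-j}}\cdot\frac{1}{3}\cdot\frac{1}{18^{\,m}}\,W$. By the definitions preceding the lemma, $S(0)=\{S_\go(\frac12):\go\in I^n\}$, and the error contributed by each such point is $\int_{J_\go}(x-S_\go(\frac12))^2\,dP=\frac{1}{75^{\,n}}\,V$, where $V=V(P)=\frac{65}{584}$ is the variance of the \emph{condensation} measure, not of $\gn$. Likewise $S^{(2)}(0)=\{S_\go(\frac{19}{90}),S_\go(\frac{71}{90}):\go\in I^n\}$ has per-element error $\frac{1}{75^{\,n}}\cdot\frac{1}{2}V_2$, with $V_2=V_2(P)$. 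Neither of these is a constant times $W/18^{m}$, so your uniform $(\Delta j,\Delta m)$ bookkeeping does not apply to them. Concretely, with your assignment $m(S(0))=0$ one would get $w(S(0))=\frac{W}{3\cdot 75^{\,n}}$, which is off from the true value $\frac{V}{75^{\,n}}$ by the factor $3V/W\approx 66.8$; this factor is precisely what makes comparisons like $S(0)\succ S(a(88))$ come out correctly (the true inequality is $3V/W>75^{88}/18^{130}\approx 66.0$, which is tight), whereas your reduction would give the wrong answer.

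A second, related confusion: none of the $\succ$ relations in (iv) actually depend on $n$. Once both sets are defined, the common factor $75^{-n}$ cancels and one is left with a \emph{fixed} numerical inequality such as $\frac{75^{7}}{3\cdot 18^{9}}W>V$ for $S(a(7))\succ S(0)$. The paper simply verifies each of these finitely many fixed inequalities directly. So your worry about ``endpoint cases'' and ranges of admissible $n$ is misplaced; the fix is just to use the correct weights $\frac{1}{75^{\,n}}V$, $\frac{1}{75^{\,n}}\frac12 V_2$ for $S(0)$, $S^{(2)}(0)$ and check the resulting absolute inequalities numerically, exactly as the paper does.
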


\begin{proof}
$(i)$  For any $k\geq 2$, let $b\in S(a(2k))=\uu_{\go \in I^{n-2k}} S_\go(\ga_{2^{a(2k)}}(\gn))$. Then, $b=S_\go(T_\gt(\frac 12))$ for some $\go \in I^{n-2k}$ and $\gt \in I^{a(2k)}$. The error contributed by $b$ in its Voronoi region is given by
\[\int_{S_\go(T_\gt(L))}(x- S_\go(T_\gt(\frac 12)))^2 dP=\frac 1{75^{n-2k}} \frac 13 \frac 1{18^{a(2k)}}W.\]
Similarly, the error contributed by any element in the set $S(a(2k+2))$ is $\frac 1{75^{n-2k-2}} \frac 13 \frac 1{18^{a(2k+2)}}W$. Thus, $S(a(2k)) \succ S(a(2k+2))$ will be true if $\frac 1{75^{n-2k}} \frac 13 \frac 1{18^{a(2k)}}W>\frac 1{75^{n-2k-2}} \frac 13 \frac 1{18^{a(2k+2)}}W$, i.e., if
$1<\frac{18^{a(2k+2)-a(2k)} }{75^{2}}=\frac{18^3}{75^2}$, which is clearly true.

$(ii)$ $S(a(82)) \succ S(a(3))$ is true since $\frac 1{75^{n-82}} \frac 13 \frac 1{18^{a(82)}} W>\frac 1{75^{n-3}} \frac 13 \frac 1{18^{a(3)}} W$. For all $k\geq 1$, proceeding similarly as in $(i)$, we have $S(a(2k+1)) \succ S(a(2k+82))$ and $S(a(2k+82)) \succ S(a(2k+3))$. Thus, $(ii)$ is proved.

$(iii)$  $S(a(83)) \succ S^{(2)}(a(4))$ is true since $\frac 1{75^{n-83}} \frac 13 \frac 1{18^{a(83)}} W>\frac 1{75^{n-4}} \frac 13 \frac 1{18^{a(4)+1}} W$. Proceeding similarly as in $(i)$, we can show that $S^{(2)}(a(2k)) \succ S(a(2k+160))$, $S(a(2k+160)) \succ S(a(2k+81))$, and $S(a(2k+81)) \succ S^{(2)}(a(2k+2))$ for all $k\geq 2$. Thus $(iii)$ follows.

$(iv)$  Since $\frac 1{75^{n-7}} \frac 13 \frac 1{18^{a(7)}} W>\frac 1{75^n} V>\frac 1{75^{n-88}} \frac 13 \frac 1{18^{a(88)}} W>\frac 1{75^{n-9}} \frac 13 \frac 1{18^{a(9)}} W$, the inequality $S(a(7)) \succ S(0) \succ S(a(88))  \succ S(a(9))$ is true. Similarly we can show the other inequalities.
\end{proof}


\begin{remark} \label{remark5555}  The Lemma~\ref{lemma19} defines the order among the elements of the sets $SF^\ast(n)$ for all $n\geq 1 . $  Since it is helpful in obtaining the optimal sets of $F(n)$-means, we exhibit the following cases for later use:
\begin{itemize}
\item[(i)] $S(0)\succ S^{(2)}(0)\succ S(1)$ for $n=1$;
\item[(ii)] $S(0) \succ S(2) \succ S^{(2)}(0) \succ S(1)$ for $n=2$;
\item[(iii)] $S(a(3)) \succ S(0) \succ S(2) \succ S^{(2)}(0) \succ S(1)$ for $n=3$;
\item[(iv)] $S(a(4)) \succ S(a(3)) \succ S(0) \succ S(2) \succ S^{(2)}(a(4)) \succ S^{(2)}(0) \succ S(1)$ for $n=4 . $
\end{itemize}
\end{remark}

\begin{lemma} \label{lemma43}
Let $\ga_{F(n)}$, $SF(n)$, $SF^{(1)}(n)$, and $SF^{(2)}(n)$ be the sets as defined before. Then,
\[\ga_{F(n+1)}=\left(\uu_{A\in SF^{(1)}(n)}A^{(2)(2)}\right)\uu \left(\uu_{A\in SF^{(2)}(n)}A^{(2)}\right).\]
\end{lemma}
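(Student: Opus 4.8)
The identity is an equality of sets, so it suffices to decompose each side into the same unordered family of ``blocks'' and to check the equality block by block (disjointness plays no role). On the left, one expands $\ga_{F(n+1)}$ by the definition of $\ga_{F(\cdot)}$ --- together with the attendant notation $S(a(\ell))$, $S(2)$, $S(1)$, $S(0)$ --- reading $n+1$ in the role of $n$: for $n\ge 3$ this is the union of the blocks $\UU_{\go\in I^{(n+1)-\ell}}S_\go(\ga_{2^{a(\ell)}}(\gn))$ with $3\le\ell\le n+1$, the block $\UU_{\go\in I^{(n+1)-2}}S_\go(\ga_{4}(\gn))$, the block $\UU_{\go\in I^{(n+1)-1}}S_\go(\ga_{2}(\gn))$, and the block $\set{S_\go(\frac12):\go\in I^{n+1}}$. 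On the right, one unfolds $A^{(2)}$ and $A^{(2)(2)}$ for $A$ running over $SF^{(1)}(n)$ and over $SF^{(2)}(n)$ from the definitions of $S^{(2)}(\cdot)$ and $S^{(2)(2)}(\cdot)$ taken with parameter $n$.

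The matching rests on two elementary observations. First, the depth index shifts for free: $(n+1)-\ell = n-(\ell-1)$, so a block $\UU_{\go\in I^{(n+1)-\ell}}S_\go(\,\cdot\,)$ formed with parameter $n+1$ is verbatim a block $\UU_{\go\in I^{\,n-(\ell-1)}}S_\go(\,\cdot\,)$ formed with parameter $n$. Second, by the remark following Definition~\ref{defi3} (so $a(2k+1)=3k$, $a(2k)=3k-2$, and $a(3)=3$), one has $2^{a(\ell)}=2^{a(\ell-1)+1}$ for even $\ell$, $2^{a(\ell)}=2^{a(\ell-1)+2}$ for odd $\ell\ge 5$, and moreover $2^{a(3)}=2^{2+1}$ and $2^{2}=2^{1+1}$ --- precisely the exponents occurring in $S^{(2)}(a(\ell-1))$, $S^{(2)(2)}(a(\ell-1))$, $S^{(2)}(2)$ and $S^{(2)}(1)$ at parameter $n$. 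Together these give the block identifications: the two blocks $\UU_{\go\in I^{(n+1)-1}}S_\go(\ga_{2}(\gn))$ and $\set{S_\go(\frac12):\go\in I^{n+1}}$ jointly constitute $S^{(2)(2)}(0)=(S(0))^{(2)(2)}$; the block $\UU_{\go\in I^{(n+1)-2}}S_\go(\ga_{4}(\gn))$ is $(S(1))^{(2)}$; the $\ell=3$ block is $(S(2))^{(2)}$; for every odd $k$ with $3\le k\le n$ the $\ell=k+1$ block is $(S(a(k)))^{(2)}$; and for every even $k$ with $4\le k\le n$ the $\ell=k+1$ block is $(S(a(k)))^{(2)(2)}$. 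It then remains to note that, as $k$ runs through the odd integers of $[3,n]$, the sets $S(a(k))$ together with $S(1)$ and $S(2)$ exhaust $SF^{(2)}(n)$, while, as $k$ runs through the even integers of $[3,n]$ (equivalently, $4\le k\le 2\lfloor n/2\rfloor$), the sets $S(a(k))$ together with $S(0)$ exhaust $SF^{(1)}(n)$; hence the two families of blocks coincide, which proves the identity for $n\ge 3$. The cases $n=1,2$ follow by the same argument from the first two branches of the definition of $\ga_{F(\cdot)}$.

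The only step I expect to be delicate is the combinatorial bookkeeping: one must verify that the index $\ell\in\set{3,\dots,n+1}$ (together with the special labels $2,1,0$) is matched bijectively and exhaustively with $SF^{(1)}(n)\uu SF^{(2)}(n)$, that the endpoints come out correctly according to the parity of $n$, and that the two blocks $\UU_{\go\in I^{(n+1)-1}}S_\go(\ga_{2}(\gn))$ and $\set{S_\go(\frac12):\go\in I^{n+1}}$ really coalesce into the single block $(S(0))^{(2)(2)}$. No analytic estimate enters; everything is an unwinding of definitions plus the alternating increments $a(\ell)-a(\ell-1)\in\set{1,2}$. As a consistency check, $|A^{(2)}|=2|A|$ and $|A^{(2)(2)}|=4|A|$ make the right-hand side have $2F(n)+2\sum_{A\in SF^{(1)}(n)}|A|$ elements, a number which equals $F(n+1)$ by a short computation (compare Lemma~\ref{lemma56}).
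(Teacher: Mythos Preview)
Your proof is correct and follows essentially the same approach as the paper's: both arguments are pure bookkeeping, matching the blocks of $\ga_{F(n+1)}$ against the blocks $A^{(2)}$ and $A^{(2)(2)}$ on the right via the depth shift $(n+1)-\ell=n-(\ell-1)$ and the alternating increments $a(\ell)-a(\ell-1)\in\{1,2\}$. The only cosmetic difference is direction---the paper starts from $A\in SF^{(1)}(n)\cup SF^{(2)}(n)$ and computes $A^{(2)}$, $A^{(2)(2)}$ forward, whereas you start from the blocks of $\ga_{F(n+1)}$ and identify each one backward---but the verifications are identical.
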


\begin{proof} Let $A \in SF^{(1)}(n)$. If $A=S(0)$, then $A^{(2)(2)}=S^{(2)(2)}(0)=\set{S_\go(\frac 12) :  \go \in I^{n+1}}\uu (\uu_{\go \in I^n}S_\go(\ga_{2}(\gn)))=\set{S_\go(\frac 12) :  \go \in I^{n+1}}\uu (\uu_{\go \in I^{n+1-1}}S_\go(\ga_{2}(\gn)))$. If $A=S(a(2\ell))$ for some $\ell \in \set{1, 2, \cdots, \lfloor \frac  n 2\rfloor}$, then
\[A^{(2)(2)}=\uu_{\go \in I^{n-2\ell}}S_\go(\ga_{2^{a(2\ell)+2}}(\gn))=\uu_{\go \in I^{n+1-(2\ell+1)}}S_\go(\ga_{2^{a(2\ell+1)}}(\gn)).\]
Next, let $A \in SF^{(2)}(n)$. Then, $A=S(1), S(2), \te{ or } S(a(2\ell+1))$ for some $\ell\in \D N$. If $A=S(1)$, then $A^{(2)}=\uu_{\go\in I^{n-1}} S_\go(\ga_{2^{1+1}}(\gn))=\uu_{\go\in I^{n+1-2}} S_\go(\ga_{2^{2}}(\gn))$, and similarly, we have  $S^{(2)}(2)=\uu_{\go\in I^{n+1-3}} S_\go(\ga_{2^{3}}(\gn))$. If $A=S(a(2\ell+1))$, then
\[A^{(2)}=\uu_{\go \in I^{n-(2\ell+1)}}S_\go(\ga_{2^{a(2\ell+1)+1}}(\gn))=\uu_{\go \in I^{n-(2\ell+1)}}S_\go(\ga_{2^{3\ell+1}}(\gn))=\uu_{\go \in I^{n-(2\ell+1)}}S_\go(\ga_{2^{a(2(\ell+1))}}(\gn)),\]
yielding, $A^{(2)}=\uu_{\go \in I^{n+1-(2\ell+1)}}S_\go(\ga_{2^{a(2(\ell+1))}}(\gn))$. Thus, we see that
\[\ga_{F(n+1)}=\left(\uu_{A\in SF^{(1)}(n)}A^{(2)(2)}\right)\uu \left(\uu_{A\in SF^{(2)}(n)}A^{(2)}\right),\]
which proves the assertion.
\end{proof}


\begin{lemma}\label{lemma71}  For $A, B\in SF (n)$ with $A \succ B$, the distortion error due to the set $ (SF(n)\setminus A)\uu A^{(2)}\uu B$ is less than the distortion error due to the set $ (SF(n)\setminus B)\uu B^{(2)}\uu A$.
\end{lemma}

\begin{proof} Let $V(\ga_{F(n)})$ be the distortion error due to the set $\ga_{F(n)}$ with respect to the condensation measure $P$. First take $A=S(a(k))$ and $B=S(a(k'))$ for some $3\leq k\neq k'\leq n$. Then, the distortion error due to the set $(\ga_{F(n)}\setminus A)\uu A^{(2)}\uu B$ is less than the distortion error due to the set $(\ga_{F(n)}\setminus B)\uu B^{(2)}\uu A$ is
\begin{align*}
& V(\ga_{F(n)})-\frac 1{75^{n-k}} \frac 13 \frac 1{9^{a(k)}}W+\frac 1{75^{n-k}} \frac 13 \frac 1{9^{a(k)+1}}W+\frac 1{75^{n-k'}} \frac 13 \frac 1{9^{a(k')}}W\\
& <V(\ga_{F(n)})-\frac 1{75^{n-k'}} \frac 13 \frac 1{9^{a(k')}}W+\frac 1{75^{n-k'}} \frac 13 \frac 1{9^{a(k')+1}}W+\frac 1{75^{n-k}} \frac 13 \frac 1{9^{a(k)}}W,
\end{align*}
yielding $\frac 1{75^{n-k}} \frac 13 \frac 1{9^{a(k)}}W>\frac 1{75^{n-k'}} \frac 13 \frac 1{9^{a(k')}}W$, which is clearly true since by the hypothesis $A \succ B$. Similarly, we can prove the lemma in each of the following cases:

 $(i)$ $A=S(a(k))$ and $B=S(k')$, where $3\leq k\leq n$ and $0\leq k'\leq 2$;

 $(ii)$ $A=S(k)$ and $B=S(a(k'))$, where $0\leq k\leq 2$ and $3\leq k'\leq n$;

  $(iii)$ $A=S(k)$ and $B=S(a(k'))$, where $0\leq k\leq 2$ and $0\leq k'\leq 2$.

  Thus, the proof of the lemma is complete.
\end{proof}

Using the similar technique as Lemma~\ref{lemma71}, the following lemma can be proved.

\begin{lemma}\label{lemma72}  For any two sets $A, B\in SF^\ast (n)$, let $A \succ B$. Then, the distortion error due to the set $ (SF^\ast(n)\setminus A)\uu A^{(2)}\uu B$ is less than the distortion error due to the set $ (SF^\ast(n)\setminus B)\uu B^{(2)}\uu A$.
\end{lemma}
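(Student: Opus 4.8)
The plan is to repeat, essentially verbatim, the bookkeeping computation from the proof of Lemma~\ref{lemma71}; the only genuinely new point is that $SF^\ast(n)$ contains, besides the members $S(j)$ ($0\le j\le 2$) and $S(a(j))$ ($3\le j\le n$) that already appear in $SF(n)$, the extra members $S^{(2)}(a(2\ell))$ and $S^{(2)}(0)$, so one has to see that these behave identically under the refinement $C\mapsto C^{(2)}$.

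First I would record two structural facts, uniform over all $C\in SF^\ast(n)$. (a) Each such $C$ is a disjoint union of blocks of the form $S_\go(\ga_{2^m}(\gn))$ for suitable words $\go$ and exponents $m\in\D N\uu\set{0}$ (the block being $\set{S_\go(\tfrac12)}$ when $m=0$), and, by the centroid property of Proposition~\ref{prop10} together with the observation recorded just before the statement that all elements of a fixed block contribute equally, the distortion error that $C$ contributes inside $\uu_{B\in SF^\ast(n)}B$ is the corresponding finite sum of terms $\tfrac{1}{3\cdot 75^{\,j}}V_{2^m}(\gn)$. Write $\mathrm{err}(C)$ for this number; then by the definition of $\succ$ the hypothesis $A\succ B$ says exactly that $\mathrm{err}(A)>\mathrm{err}(B)$. (b) Passing from $C$ to $C^{(2)}$ replaces every block $S_\go(\ga_{2^m}(\gn))$ by $S_\go(\ga_{2^{m+1}}(\gn))$, and by Proposition~\ref{prop0} one has $V_{2^{m+1}}(\gn)=\tfrac19 V_{2^m}(\gn)$ (indeed $V_{2^m}(\gn)=W/9^m$ for every $m$, since $2^m$ is an exact power of $2$); hence $\mathrm{err}(C^{(2)})=\tfrac19\mathrm{err}(C)$ for every $C\in SF^\ast(n)$, in particular for the new members $S^{(2)}(a(2\ell))$ and $S^{(2)}(0)$.

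Then, exactly as in Lemma~\ref{lemma71}, I would expand the distortion errors of $(SF^\ast(n)\setminus A)\uu A^{(2)}\uu B$ and of $(SF^\ast(n)\setminus B)\uu B^{(2)}\uu A$ as a common base error plus the block contributions that get changed, cancel the common terms, and substitute $\mathrm{err}(A^{(2)})=\tfrac19\mathrm{err}(A)$ and $\mathrm{err}(B^{(2)})=\tfrac19\mathrm{err}(B)$. The inequality ``error of the first set $<$ error of the second set'' then collapses to a negative multiple of $\mathrm{err}(A)-\mathrm{err}(B)$ being negative, i.e. to $\mathrm{err}(A)>\mathrm{err}(B)$, which is the hypothesis. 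One runs through the finitely many shapes of the pair $(A,B)$ — both of type $S(a(\cdot))$; one of type $S(a(\cdot))$ and one of type $S(\cdot)$; both of type $S(\cdot)$; and the new cases where at least one of $A,B$ is $S^{(2)}(a(2\ell))$ or $S^{(2)}(0)$ — but by fact (b) each case reduces to the same inequality, so none is harder than the others.

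The one step that requires genuine care is fact (b) for the two new member types: checking that the exponent $m$ occurring in their blocks (namely $m=a(2\ell)+1$ for $S^{(2)}(a(2\ell))$, and $m=1$ for $S^{(2)}(0)$) still produces the clean ratio $V_{2^{m+1}}(\gn)/V_{2^m}(\gn)=\tfrac19$. This is immediate from Proposition~\ref{prop0} because $2^m$ and $2^{m+1}$ are exact powers of $2$; with that in hand, everything else is routine bookkeeping identical to the proof of Lemma~\ref{lemma71}.
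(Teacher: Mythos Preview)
Your overall strategy---reducing the comparison to a single scalar inequality via a uniform refinement ratio, exactly as in Lemma~\ref{lemma71}---matches what the paper intends (its entire proof of Lemma~\ref{lemma72} is the sentence ``using the similar technique as Lemma~\ref{lemma71}''). However, your structural facts (a) and (b) are false for $S(0)$ and for $S^{(2)}(0)$. The element $S_\go(\tfrac12)\in S(0)$ has Voronoi region $J_\go$, not $L_\go$, so its contribution is $\tfrac{1}{75^{n}}V$ with $V=V(P)$, not $\tfrac{1}{3\cdot 75^{n}}V_{1}(\gn)=\tfrac{1}{3\cdot 75^{n}}W$. More importantly, $S^{(2)}(0)$ is \emph{defined} in the paper as $\{S_\go(\tfrac{19}{90}),\,S_\go(\tfrac{71}{90}):\go\in I^{n}\}=\{S_\go(\ga_2(P)):\go\in I^{n}\}$, \emph{not} $\uu_{\go\in I^{n}}S_\go(\ga_2(\gn))$; its total contribution is $(\tfrac{2}{75})^{n}V_2(P)$. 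Hence $\mathrm{err}(S^{(2)}(0))/\mathrm{err}(S(0))=V_2(P)/V(P)\approx 0.25$, and $\mathrm{err}(S^{(2)(2)}(0))/\mathrm{err}(S^{(2)}(0))=V_4(P)/V_2(P)$, neither equal to $\tfrac19$. So your ``clean ratio $\tfrac19$'' fails precisely on $S(0)$ and on $S^{(2)}(0)$---the very member you single out as new---and your claim that ``$m=1$ for $S^{(2)}(0)$'' misidentifies the block type.

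The repair is exactly what Lemma~\ref{lemma71} already does: handle the pairs involving $S(0)$ (and now also $S^{(2)}(0)$) as separate cases, using the $P$-errors $V,V_2,V_4$ rather than $\gn$-errors, and check the resulting scalar inequality directly. A second, smaller slip: the relation $\succ$ is defined in the paper via the \emph{per-element} error, whereas your $\mathrm{err}(C)$ is the \emph{total} error over $C$; you assert these coincide ``by definition'', but they do not in general since the cardinalities of the $C$'s differ. In a genuine case-by-case verification (as in Lemma~\ref{lemma71}) this is harmless because one checks the needed inequality explicitly, but your uniform argument leans on the conflation.
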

\begin{remark}\label{remark45}
By Proposition~\ref{prop3}, we know that $\ga_{F(1)}$ is an optimal set of $F(1)$-means. Assume that $\ga_{F(n)}$ is an optimal set of $F(n)$-means for some $n\geq 1$. Let $A\in SF(n)$ be such that $A \succ B$ for any other $B\in SF(n)$. By Lemma~\ref{lemma71}, we deduce that if $A=S(0)=\set{S_\go(\frac 12) : \go \in I^n}$, then the set $(\ga_{F(n)}\setminus A)\uu\set{S_\go (\frac {19}{90}), S_\go(\frac {71}{90}) : \go \in I^n}$ is an optimal set of $F(n)-2^n+2^{n+1}$-means. If $A=\uu_{\go \in I^{n-\ell}} S_\go(\ga_{2^{\ell}}(\gn))$ for $1\leq \ell\leq 3$, then the set  $(\ga_{F(n)}\setminus A)\uu (\uu_{\go \in I^{n-\ell}} S_\go(\ga_{2^{\ell+1}}(\gn)))$ is an optimal set of $F(n)-2^n+2^{n+1}$-means. If $A=\uu_{\go \in I^{n-\ell}}S_\go(\ga_{2^{a(\ell)}}(\gn))$, then the set $(\ga_{F(n)}\setminus A)\uu (\uu_{\go \in I^{n-\ell}} S_\go(\ga_{2^{a(\ell)+1}}(\gn)))$ is an optimal set of $F(n)-2^{n-\ell} 2^{a(\ell)}+2^{n-\ell}2^{a(\ell)+1}$-means.
\end{remark}

\medskip

\begin{prop}\label{prop351}
Let $a(n)$ and $F(n)$ be the two sequences as defined by Definition~\ref{defi3}. Then, for any $n\geq 1$, the set $\ga_{F(n)}(P)$
is an optimal set of $F(n)$-means with quantization error given by
\[V_{F(n)}(P)=\left\{\begin{array} {ll}
\frac 13 \frac 1{9}W+\frac 2{75}V \ \  \te{ if } n=1,\\
 \frac 13 \frac 1{9^2}W+\frac 2{75}\frac 13 \frac 1{9}W+\left(\frac 2{75}\right)^{2}V  \ \  \te{ if } n=2,\\
 \sum_{\ell=3}^n\left(\frac 2{75}\right)^{n-\ell}\frac 13 \frac 1{9^{a(\ell)}}W+\left(\frac 2{75}\right)^{n-2}\frac 13 \frac 1{9^{2}}W+\left(\frac 2{75}\right)^{n-1}\frac 13 \frac 1{9}W+\left(\frac 2{75}\right)^{n}V  \ \ \te{ if } n\geq 3.\end{array} \right.\]
\end{prop}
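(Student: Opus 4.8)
The plan is to prove the statement by induction on $n$, with the base cases $n=1,2,3$ handled essentially by the computations already in hand. For $n=1$, the set $\ga_{F(1)} = S(1)\uu S(0)$ is exactly $\set{S_1(\frac12), T_1(\frac12), T_2(\frac12), S_2(\frac12)}$, which is an optimal set of four-means by Proposition~\ref{prop3}, and its quantization error $V_4 = \frac{1243}{394200}$ should be checked to equal $\frac13\frac19 W + \frac2{75}V$ using $W = \frac1{200}$ and $V = V(P) = \frac{65}{584}$ from Lemmas~\ref{lemma2} and~\ref{lemma3}. For $n=2$ and $n=3$ one verifies directly, using \eqref{eq4}, \eqref{eq40}, and Lemma~\ref{lemma0}, that the displayed formula gives the distortion of $\ga_{F(n)}$ and that this set is optimal; here one invokes the decomposition of $\ga_{F(n)}$ into the pieces $S(a(\ell)), S(2), S(1), S(0)$ together with the error contribution of each piece, namely $\frac1{75^{n-\ell}}\frac13\frac1{9^{a(\ell)}}W$ for $S(a(\ell))$, $(\frac2{75})^{n-2}\frac13\frac1{9^2}W$ for $S(2)$, $(\frac2{75})^{n-1}\frac13\frac19 W$ for $S(1)$, and $(\frac2{75})^n V$ for $S(0)$ (the latter following from Corollary~\ref{cor1} and Lemma~\ref{lemma0}).

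For the inductive step, I would assume $\ga_{F(n)}$ is an optimal set of $F(n)$-means with the stated error, and then show $\ga_{F(n+1)}$ is an optimal set of $F(n+1)$-means. The structural identity $\ga_{F(n+1)} = \big(\uu_{A\in SF^{(1)}(n)}A^{(2)(2)}\big)\uu\big(\uu_{A\in SF^{(2)}(n)}A^{(2)}\big)$ from Lemma~\ref{lemma43} tells us exactly how $\ga_{F(n+1)}$ is built from $\ga_{F(n)}$: every ``even-indexed'' block $S(a(2\ell))$ (and $S(0)$) gets refined twice, every ``odd-indexed'' block $S(2),S(1),S(a(2\ell+1))$ gets refined once. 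The key point is to argue that, starting from an optimal $F(n)$-means configuration and adding quantizers one block at a time, the greedy order in which blocks should be refined is precisely the order $\succ$ described in Proposition~\ref{prop5555}; Lemma~\ref{lemma71} (and its starred version Lemma~\ref{lemma72}) guarantee that swapping the refinement of a $\succ$-larger block for a $\succ$-smaller one strictly decreases the distortion, so a distortion-minimizing sequence of single-block refinements must respect $\succ$. One then tracks how many new means each refinement adds — $+2^{n-\ell}$ when $S(a(\ell))$ with $1\le\ell\le 3$ or $S(0)$ is split once (from $2^{n-\ell}2^{\ell}$ to $2^{n-\ell}2^{\ell+1}$, and analogously for $a(\ell)$), as recorded in Remark~\ref{remark45} — and checks that after performing all the refinements dictated by Lemma~\ref{lemma43} the total count increases from $F(n)$ to $F(n+1)$, using the recursion $F(n+1)=2^{a(n+1)}+2F(n)$ from Lemma~\ref{lemma56} together with the doubling of the remaining blocks. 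Finally, summing the new block-error contributions gives the claimed closed form for $V_{F(n+1)}(P)$, where the shift $n\mapsto n+1$ multiplies each old term by $\frac2{75}$ (reflecting $S_\go$ for $|\go|$ one longer, contributing the factor $\frac13\cdot\frac1{25}$ per symbol, i.e. $\frac1{75}$, and a factor $2$ from doubling the number of words) and introduces the new leading term $(\frac2{75})^{(n+1)-(n+1)}\frac13\frac1{9^{a(n+1)}}W = \frac13\frac1{9^{a(n+1)}}W$ from $S(a(n+1))$.

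The essential obstacle — and the part that requires genuine work rather than bookkeeping — is establishing the \emph{optimality} of $\ga_{F(n+1)}$, as opposed to merely computing its distortion. Knowing that $\ga_{F(n+1)}$ arises from $\ga_{F(n)}$ by the ``best'' sequence of block-refinements shows it is optimal \emph{among all configurations reachable from optimal $F(n)$-means by such refinements}, but to conclude it is globally optimal for $F(n+1)$ one must rule out configurations that split the support of $P$ in a genuinely different way. This is where Proposition~\ref{prop003} is indispensable: it forces any optimal set of $m$-means (for $m\ge3$) to place points in each of $J_1$, $L$, $J_2$ and to avoid the gaps $(\frac15,\frac25)$ and $(\frac35,\frac45)$, and by applying it inside each cylinder $J_\go$ (via Lemma~\ref{lemma4}, since $P\circ S_\go^{-1}$ restricted appropriately behaves like a scaled copy of $P$) one gets a recursive description of where optimal quantizers must live. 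Combining this self-similar constraint with the scaling relations for the errors (equations \eqref{eq4}, \eqref{eq40}, Lemma~\ref{lemma0}, Corollary~\ref{cor1}, Proposition~\ref{prop0}) pins down that an optimal $F(n+1)$-means set must distribute its points among the blocks exactly as $\ga_{F(n+1)}$ does, and then the $\succ$-ordering arguments finish the identification. I would therefore structure the induction so that the ``no points in the gaps, points in all three blocks'' property is carried along as part of the inductive hypothesis (it is already known for all $n\ge3$ by Proposition~\ref{prop003}), reducing the step to a finite combinatorial optimization over how to allocate the budget $F(n+1)$ across the blocks, solved by the $\succ$ order.
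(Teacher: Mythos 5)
Your proposal follows essentially the same route as the paper: base cases via Propositions~\ref{prop3}--\ref{prop0021}, then an induction in which blocks of $\ga_{F(n)}$ are greedily refined in the order $\succ$ (Lemmas~\ref{lemma71}, \ref{lemma72}, Remark~\ref{remark45}), with Lemma~\ref{lemma43} identifying the endpoint as $\ga_{F(n+1)}$ and the scaling relations giving the recursion $V_{F(n+1)}=\frac13\frac1{9^{a(n+1)}}W+\frac2{75}V_{F(n)}$. The global-optimality issue you flag is exactly the point the paper disposes of via Proposition~\ref{prop003} and the cardinality-allocation argument in Case~1 of its proof, so your treatment is a correct and, if anything, slightly more explicit version of the published argument.
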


\begin{proof} By Proposition~\ref{prop3}, for $n=1$ the set $\ga_{F(1)}$ is an optimal set of $F(1)$-means with quantization error $\frac 13 \frac 1{9}W+\frac 2{75}V$. We now show that $\ga_{F(n)}$ is an optimal set of $F(n)$-means for any $n\geq 2$. Consider the following cases:

Case 1. $n=2$.

Let $\ga$ be an optimal set of $F(2)$-means. Recall that $\ga$ does not contain any point from the open intervals $(\frac 15, \frac 25)$ and $(\frac 35, \frac 45)$. The distortion error due to the set $\ga_{F(2)}(P)=\ga_{2^2}(\gn) \uu \left(\uu_{\go \in I} S_\go(\ga_2(\gn))\right) \uu \set{S_\go(\frac 12) : \go \in I^2}$ is given by
\begin{equation*} \label{eq555} \int \min_{a\in \ga_{F(2)}(P)} (x-a)^2 dP= \frac 13 \frac 1{9^2}W+\frac 2{75}\frac 13 \frac 1{9}W+\left(\frac 2{75}\right)^{2}V=\frac{9283}{88695000}.
\end{equation*}
Since $V_{F(2)}(P)$ is the quantization error for $F(2)$-means, we have $V_{F(2)}(P)\leq \frac{9283}{88695000}.$ Since $\ga$ is an optimal set of $F(2)$-means, $F(2)$ is even, and $P$ is symmetric about the point $\frac 12$, $\ga$ must contain equal number of points from each of the sets $J_1$ and $J_2$ implying that $\ga$ contains even number of points from $L$. We now show that $\ga$ contains $2^2$ elements from $L$. Suppose that $\ga$ contains less than $2^2$ elements from $L$. Then, $\te{card}(\ga\ii L)=0$ or $2$. But, $\ga\ii L\neq \es$, and so $\te{card}(\ga\ii L)=2$. Hence,
\[\int \min_{a\in \ga_{F(2)}(P)} (x-a)^2 dP\geq \int_{L} \min_{a\in\ga\ii L}(x-a)^2 dP=\frac 13 \frac 19 W=\frac{1}{5400}>V_{F(2)}(P), \]
which is a contradiction. Next suppose that $\ga$ contains more than $2^2$ elements from $L$. As $\ga\ii J_i\neq \es$ for $i=1, 2$, we must have $\te{card}(\ga\ii L)=6, 8$, or $10$. Suppose that $\te{card}(\ga\ii L)=6$. Then, for $1\leq i\leq 2$, $\ga\ii J_i$ is an optimal set of three-means with respect to the image measure $P\circ S_i^{-1}$, which by Lemma~\ref{lemma4} yields that $S_i^{-1}(\ga\ii J_i)$ is an optimal set of three-means for $P$, and so by Proposition~\ref{prop2}, we have
$S_i^{-1}(\ga\ii J_i)=\set{S_1(\frac 12), \frac 12, S_2(\frac 12)}$, i.e., $\ga\ii J_1=\set{S_{11}(\frac 12), S_1(\frac 12), S_{12}(\frac 12)}$, and $\ga\ii J_2=\set{S_{21}(\frac 12), S_2(\frac 12), S_{22}(\frac 12)}$. Then, the distortion error is
$V_{F(2)}(P)\geq2\Big(\int_{J_{11}}(x-S_{11}(\frac 12))^2 dP+\int_{S_1(L)}(x-S_1(\frac 12))^2dP+\int_{J_{12}}(x-S_{12}(\frac 12))^2 dP\Big)=2\Big(\frac 1{75^2} V+\frac 1{75} \frac 13 W+\frac 1{75^2} V\Big)=\frac{203}{1642500}>V_{F(2)}(P),$
which is a contradiction. Similarly, we can show that if $\te{card}(\ga\ii L)=8$ or $10$, contradiction will arise. Hence, $\te{card}(\ga\ii L)=2^2$, which implies that $\ga$ contains $F(1)$ elements from each of the sets $J_1$ and $J_2$. Thus, we see that in this case $\ga=\ga_{F(2)}(P)$ with quantization error $V_{F(2)}(P)=\frac 13 \frac 1{9^2}W+\frac 2{75}\frac 13 \frac 1{9}W+\left(\frac 2{75}\right)^{2}V$.

Case~3. $n=3$.

By Case 1, we know that $\ga_{F(2)}=S(0)\uu S(1)\uu S(2)$ is an optimal set of $F(2)$-means,  where $S(0), S(1), S(2)\in SF(2)$.  By Remark~\ref{remark5555},  $S(0) \succ S(2) \succ S^{(2)}(0) \succ S(1)$. Thus, by Remark~\ref{remark45}, the set $(\ga_{F(2)}\setminus S(0))\uu S^{(2)}(0)=S(1)\uu S(2)\uu  S^{(2)}(0)$ is an optimal set of $2^2+2^2+2^3$-means. Similarly, the set $S(1)\uu S^{(2)}(2)\uu  S^{(2)}(0)$ is an optimal set of $2^2+2^3+2^3$-means, the set $S(1)\uu S^{(2)}(2)\uu  S^{(2)(2)}(0)$ is an optimal set of $2^2+2^3+2^3+2^3$-means, the set $S^{(2)}(1)\uu S^{(2)}(2)\uu  S^{(2)(2)}(0)$ is an optimal set of $2^3+2^3+2^3+2^3=F(3)$-means. By Lemma~\ref{lemma43}, it is known that $\ga_{F(3)}=S^{(2)}(1)\uu S^{(2)}(2)\uu  S^{(2)(2)}(0)$. Thus, $\ga_{F(3)}$ is an optimal set of $F(3)$-means with quantization error same as it is given in the hypothesis for $n=3$.

Case~4. $n\geq 4$.

Let $\ga_{F(n)}$ be an optimal set of $F(n)$-means for some $n\geq 3$. We need to show that $\ga_{F(n+1)}$ is an optimal set of $F(n+1)$-means. We have
$\ga_{F(n)}=\uu_{A\in SF(n)}A$. In the first step, let $A(1) \in SF(n)$ be such that $A(1) \succ B$ for any other $B\in SF(n)$. Then, by Remark~\ref{remark45}, the set  $(\ga_{F(n)}\setminus A(1)) \uu A^{(2)}(1)$ gives an optimal set of $F(n)-\te{card}(A(1))+\te{card}(A^{(2)}(1))$-means. In the 2nd step, let $A(2) \in (SF(n)\setminus \set{A(1)})\uu \set{A^{(2)}(1)}$ be such that $A(2) \succ B$ for any other set $B\in  (SF(n)\setminus \set{A(1)})\uu \set{A^{(2)}(1)}$. Then, using the similar technique as Lemma~\ref{lemma71}, we can show that the distortion error due to the following set:
\begin{equation} \label{eq44} \Big(((\ga_{F(n)}\setminus A(1)) \uu A^{(2)}(1))\setminus A(2)\Big)\uu A^{(2)}(2)
\end{equation}
 with cardinality $F(n)-\te{card}(A(1))+\te{card}(A^{(2)}(1))-\te{card}(A(2))+\te{card}(A^{(2)}(2))$ is smaller than the distortion error due to the set obtained by replacing $A(2)$ in the set \eqref{eq44} by any other set $A'(2)$ having the same cardinality as $A(2)$. In other words, $\Big(((\ga_{F(n)}\setminus A(1)) \uu A^{(2)}(1))\setminus A(2)\Big)\uu A^{(2)}(2)$ forms an optimal set of
$F(n)-\te{card}(A(1))+\te{card}(A^{(2)}(1))-\te{card}(A(2))+\te{card}(A^{(2)}(2))$-means. Proceeding inductively in this way, up to $(n+1+2\lfloor \frac n 2\rfloor)$ steps, we can see that $\ga_{F(n+1)}=\left(\uu_{A\in SF^{(1)}(n)}A^{(2)(2)}\right)\uu \left(\uu_{A\in SF^{(2)}(n)}A^{(2)}\right)$ forms an optimal set of $F(n+1)$-means.

The $F(n)$-th quantization error for all $n\geq 3$ is given by
\begin{align*}
&\int \min_{a\in \ga_{F(n)}(P)} (x-a)^2 dP\\
&=\frac 13 V_{2^{a(n)}}(\gn)+\sum_{\go \in I}\sum_{\gt \in I^{a(n-1)}} \int _{S_\go T_\gt(L)}(x-S_\go T_\gt(\frac 12))^2 dP+\sum_{\go \in I^2}\sum_{\gt \in I^{a(n-2)}} \int _{S_\go T_\gt(L)}(x-S_\go T_\gt(\frac 12))^2 dP\\
&+\cdots +\sum_{\go \in I^{n-5}}\sum_{\gt \in I^{a(5)}} \int _{S_\go T_\gt(L)}(x-S_\go T_\gt(\frac 12))^2 dP+\sum_{\go \in I^{n-4}}\sum_{\gt \in I^{a(4)}} \int _{S_\go T_\gt(L)}(x-S_\go T_\gt(\frac 12))^2 dP\\
&+\sum_{\go \in I^{n-3}}\sum_{\gt \in I^{a(3)}} \int _{S_\go T_\gt(L)}(x-S_\go T_\gt(\frac 12))^2 dP+\sum_{\go \in I^{n-2}}\sum_{\gt \in I^2} \int _{S_\go T_\gt(L)}(x-S_\go T_\gt(\frac 12))^2 dP\\
&+\sum_{\go \in I^{n-1}}\sum_{\gt \in I} \int _{S_\go T_\gt(L)}(x-S_\go T_\gt(\frac 12))^2 dP +\sum_{\go \in I^{n}} \int_{J_\go}(x-S_\go (\frac 12))^2 dP\\
&=\frac 13 \frac 1{9^{a(n)}} W+\frac 2{75}\frac 13 \frac 1{9^{a(n-1)}}W+\Big(\frac 2{75}\Big)^2\frac 13 \frac 1{9^{a(n-2)}}W+\cdots +\Big(\frac 2{75}\Big)^{n-5}\frac 13 \frac 1{9^{a(5)}}W\\
&+\Big(\frac 2{75}\Big)^{n-4}\frac 13 \frac 1{9^{a(4)}}W+\Big(\frac 2{75}\Big)^{n-3}\frac 13 \frac 1{9^{a(3)}}W+\Big(\frac 2{75}\Big)^{n-2}\frac 13 \frac 1{9^{2}}W+\Big(\frac 2{75}\Big)^{n-1}\frac 13 \frac 1{9}W+\Big(\frac 2{75}\Big)^{n}V\\
&=\sum_{\ell=3}^n\Big(\frac 2{75}\Big)^{n-\ell}\frac 13 \frac 1{9^{a(\ell)}}W+\Big(\frac 2{75}\Big)^{n-2}\frac 13 \frac 1{9^{2}}W+\Big(\frac 2{75}\Big)^{n-1}\frac 13 \frac 1{9}W+\Big(\frac 2{75}\Big)^{n}V.
\end{align*}
Thus, the proof of the proposition is complete.
\end{proof}

Using Proposition~\ref{prop351} and the recursive properties of canonical sequences, by routine calculations the following lemma is obtained.

\begin{lemma}\label{lemma111} For all $k\geq 2$, we have
\begin{align*} V_{F(2k+1)}& =\frac 1{9^{a(2k+1)}} \frac{79}{7224} \Big(1-\Big(\frac {324}{625}\Big)^k\Big) - \frac{3571}{44347500}\Big(\frac 2{75}\Big)^{2k-1}, \te{ and } \\
 V_{F(2(k+1))}&= \frac 1 3 \frac 1{9^{a(2(k+1))}} W+\frac 2{75} V_{F(2k+1)}.
\end{align*}
\end{lemma}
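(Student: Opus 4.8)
The plan is to derive both identities directly from the closed formula for $V_{F(n)}(P)$ established in Proposition~\ref{prop351}, together with the explicit values of $a(n)$, $V=V(P)=\frac{65}{584}$, and $W=V(\gn)=\frac 1{200}$. For the second identity, $V_{F(2(k+1))}=\frac 13 \frac 1{9^{a(2(k+1))}}W+\frac 2{75}V_{F(2k+1)}$, the fastest route is a term-by-term comparison of the two sums coming from Proposition~\ref{prop351}: writing out $V_{F(2k+2)}$ as $\sum_{\ell=3}^{2k+2}(\frac 2{75})^{2k+2-\ell}\frac 13\frac 1{9^{a(\ell)}}W + (\frac 2{75})^{2k}\frac 13\frac 1{81}W + (\frac 2{75})^{2k+1}\frac 13\frac 19 W + (\frac 2{75})^{2k+2}V$, one peels off the $\ell=2k+2$ term of the sum, which is exactly $\frac 13\frac 1{9^{a(2k+2)}}W$, and observes that every remaining term carries a common factor $\frac 2{75}$ and the leftover equals precisely $\frac 2{75}$ times the formula for $V_{F(2k+1)}$. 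This is the telescoping structure already implicit in the recursion $F(n)=2^{a(n)}+2F(n-1)$ from Lemma~\ref{lemma56}, so no real obstacle arises here; it is bookkeeping.

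For the first identity I would substitute $a(\ell)$ explicitly using $a(2j+1)=3j$ and $a(2j)=3j-2$ (the Remark after Definition~\ref{defi3}) into the sum $\sum_{\ell=3}^{2k+1}(\frac 2{75})^{2k+1-\ell}\frac 13\frac 1{9^{a(\ell)}}W$, splitting it according to the parity of $\ell$. Each of the two resulting subsums is geometric: for odd $\ell=2j+1$ the summand is proportional to $(\frac 2{75})^{2k-2j}\,9^{-3j}=(\frac 2{75})^{2k}\,(\frac{75^2}{2^2\cdot 729})^{j}$, and similarly for even $\ell$, so both collapse to closed form in terms of $(\frac{324}{625})^{k}$ (note $\frac{2^2\cdot 729}{75^2}=\frac{2916}{5625}$ and manipulations of these constants produce the ratio $\frac{324}{625}$). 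Adding the two geometric sums, together with the three tail terms $(\frac 2{75})^{2k-1}\frac 13\frac 1{81}W$, $(\frac 2{75})^{2k}\frac 13\frac 19 W$, $(\frac 2{75})^{2k+1}V$, and collecting the coefficient of $\frac 1{9^{a(2k+1)}}=9^{-3k}$ versus the coefficient of $(\frac 2{75})^{2k-1}$, should reproduce $\frac{79}{7224}(1-(\frac{324}{625})^k) - \frac{3571}{44347500}(\frac 2{75})^{2k-1}$. To keep the argument honest one can also verify the base case $k=2$ directly from the listed value $F(5)=224$ and Proposition~\ref{prop351}, and then note that the formula is forced for all $k$ by the recursion just proved in the second identity (i.e., prove the first identity by induction using the second, which avoids summing the geometric series twice).

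The main obstacle, such as it is, is purely arithmetic: tracking the rational constants $\frac{79}{7224}$, $\frac{3571}{44347500}$, and the ratio $\frac{324}{625}$ correctly through the geometric-sum evaluation, since the numbers $W=\frac1{200}$, $V=\frac{65}{584}$, $9^{-3k}$, and $(\frac2{75})^{2k-1}$ interact in a way that is easy to misreduce. I would mitigate this by organizing the computation as: (1) prove $V_{F(2(k+1))}=\frac13\frac1{9^{a(2(k+1))}}W+\frac2{75}V_{F(2k+1)}$ by the term-peeling argument above; (2) check $V_{F(5)}$ by hand against the claimed formula at $k=2$; (3) feed the claimed formula for $V_{F(2k+1)}$ into the analogous (odd-index) recursion $V_{F(2k+1)}=\frac13\frac1{9^{a(2k+1)}}W+\frac2{75}V_{F(2k)}$ and $V_{F(2k)}=\frac13\frac1{9^{a(2k)}}W+\frac2{75}V_{F(2k-1)}$ to confirm it propagates, i.e., do induction on $k$ rather than a direct summation. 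With the recursions in hand, the algebra reduces to verifying a single identity between rational functions of $(\frac{324}{625})^k$ and $(\frac2{75})^{2k-1}$, which is routine.
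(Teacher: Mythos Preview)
Your proposal is correct and follows essentially the same approach as the paper: for $V_{F(2(k+1))}$ the paper peels off the top term of the Proposition~\ref{prop351} sum and factors out $\tfrac{2}{75}$ exactly as you describe, and for $V_{F(2k+1)}$ the paper splits $\sum_{\ell=3}^{2k+1}$ by parity of $\ell$, evaluates the two resulting geometric series in the ratio $(\tfrac{2}{75})^2\cdot 9^3=\tfrac{324}{625}$, and then combines with the three tail terms. Your alternative induction-based check is a reasonable way to organize the arithmetic, but the paper carries out the direct summation.
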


\begin{remark}
By Proposition~\ref{prop351}, we see that if $\ga_{F(n)}$ is an optimal set of $F(n)$-means for some $n\geq 4$, then $\ga_{F(n)}$ contains $F(n-1)$ elements from each of $J_1$ and $J_2$, and $2^{a(n)}$ elements from $L$. Thus, by Proposition~\ref{prop55}, we have
$V_{F(n)}(P)=  \frac 1 3 \frac 1{9^{a(n)}} W+\frac 2 {75}  V_{F(n-1)}(P)$.
\end{remark}

\subsection{Calculation of Optimal sets of $m$-means for all $m\in \D N$} \label{subsection}

Now, we give the description of how to calculate the optimal sets of $m$-means for any $m \in \D N$.

For $1\leq m\leq 4$, the optimal sets of $m$-means and the $m$th quantization error are known by Lemma~\ref{lemma3}, Proposition~\ref{prop1}, Proposition~\ref{prop2}, and Proposition~\ref{prop3}. If $m=F(n)$ for some positive integer $n$, then they are known by Proposition~\ref{prop351}. Let $m\in \D N$ be such that $F(n_m)<m<F(n_m+1)$ for some positive integer $n_m$. Notice that $SF^\ast(n_m)$ contains $(n_m+1+\lfloor \frac {n_m}2 \rfloor)$ elements. After rearranging the elements of $SF^\ast(n_m)$ write
\[SF^\ast(n_m)=\set{C(0), C(1), C(2), \cdots, C(n_m+1+\lfloor \frac {n_m}2 \rfloor)},\]
where $C(0) \succ C(1) \succ C(2) \succ \cdots \succ C(n_m+1+\lfloor \frac {n_m}2 \rfloor)$. Let $\ell_m\in \D N$ be such that
\[F(n_m)+\te{card}(C(0))+\cdots +\te{card}(C(\ell_m))\leq m<F(n_m)+\te{card}(C(0))+\cdots +\te{card}(C(\ell_m+1)).\]
If $m=F(n_m)+\te{card}(C(0))+\cdots +\te{card}(C(\ell_m))$, then the set
\begin{align*} \ga_m(P)&=\Big(\ga_{F(m)}\setminus (C(0)\uu C(1)\uu\cdots\uu C(\ell_m))\Big)\\
& \uu \Big((C^{(2)}(0)\uu C^{(2)}(1)\uu\cdots\uu C^{(2)}(\ell_m))\setminus (C(0)\uu C(1)\uu\cdots\uu C(\ell_m))\Big)
\end{align*}
is a unique optimal set of $m$-means. Let $F(n_m)+\te{card}(C(0))+\cdots +\te{card}(C(\ell_m))< m<F(n_m)+\te{card}(C(0))+\cdots +\te{card}(C(\ell_m+1))$. Let $\gb_m\sci C(\ell_m+1)$ with $\te{card}(\gb_m)=m-(F(n_m)+\te{card}(C(0))+\cdots +\te{card}(C(\ell_m)))$. The following three cases can arise:

Case~1. $C(\ell_m+1)=\uu_{\go\in I^{n-k}}S_\go(\ga_{2^{a(k)}}(\gn))$.

Write $
\gb_{m, 1}:=\set{\go \in I^{n-k} : S_\go T_\gt(\frac 12)\in \gb_m \te{ for some } \gt\in I^{a(k)}},$ and
$\gb_{m, 2}:=\set{\gt \in I^{a(k)} : S_\go T_\gt(\frac 12)\in \gb_m \te{ for some } \go\in I^{n-k}}$. Set
\[\gb_m^\ast:=\set{S_\go T_\gt T_1(\frac 12) : \go\in\gb_{m, 1} \te{ and } \gt\in \gb_{m, 2}} \uu \set{S_\go T_\gt T_2(\frac 12) : \go\in\gb_{m, 1} \te{ and } \gt\in \gb_{m, 2}}.\]

Case~2. $C(\ell_m+1)=\set{S_\go(\frac 12) : \go \in I^{n_m}}$.

Write $\gb_{m, 1}:=\set{\go \in I^{n_m} : S_\go(\frac 12) \in \gb}$, and then $\gb_m^\ast:=\mathop{\uu}\limits_{\go\in \gb_{m,1}} \set{S_\go(\frac {19}{90}), S_\go(\frac{71}{90})}$.

Case~3. $C(\ell_m+1)=\mathop{\uu}\limits_{\go\in I^{n_m}} \set{S_\go(\frac {19}{90}), S_\go(\frac{71}{90})}$.

Write $\gb_{m, 1}:=\set{\go \in I^{n_m} : S_\go(\frac {19}{90}) \in \gb_m \te{ or } S_\go(\frac{71}{90}) \in \gb_m}$, and write
\[\gb_m^\ast:=\mathop{\uu}\limits_{\set{\go \in \gb_{m, 1} \&  S_\go(\frac {19}{90})\in \gb_m}} \set{S_{\go1} (\frac 12), S_{\go}T_1(\frac 12) }\uu \mathop{\uu}\limits_{\set{\go \in \gb_{m, 1} \&  S_\go(\frac {71}{90})\in \gb_m}} \set{ S_{\go}T_2(\frac 12), S_{\go2} (\frac 12) }.\]

Let $\gb_m^\ast$ be the set that arises either in Case~1, Case~2, or in Case~3. Then, the set
\begin{align*} \ga_m(P)& =\Big(\Big(\ga_{F(m)}\setminus (C(0)\uu C(1)\uu\cdots\uu C(\ell_m))\Big)\\
&\uu \Big((C^{(2)}(0)\uu C^{(2)}(1)\uu\cdots\uu C^{(2)}(\ell_m))\setminus (C(0)\uu C(1)\uu\cdots\uu C(\ell_m))\Big)\setminus \gb_m \Big) \uu \gb_m^\ast
\end{align*}
is an optimal set of $m$-means. The number of such sets in any of the above cases is given by $^{\te{card}(C(\ell_m+1))} C_{\te{card}(\gb_m)}$,
where $^uC_v =\begin{pmatrix} u \\ v \end{pmatrix} $ represent the binomial coefficient.

The following two examples illustrate the computations described above.

\begin{example} Let $m=21$. Since $F(2)<m<F(3)$, we have $n_m=2$. Since $S(0) \succ S(2) \succ S^{(2)}(0) \succ S(1)$, we have
$SF^\ast(n_m)=SF^\ast(2)=\set{C(0), C(1), C(2), C(3)},$
where $C(0)=S(0)$, $C(1)=S(2)$, $C(2)=S^{(2)}(0)$, and $C(3)=S(1)$. Again, $F(2)+\te{card}(C(0))+\te{card}(C(1))<m<F(2)+\te{card}(C(0))+\te{card}(C(1))+\te{card}(C(2))$ implying $\ell_m=1$, and so
$C(\ell_m+1)=S^{(2)}(0)$. Take $\gb_m=\set{S_{11}(\frac{19}{90}), S_{12}(\frac{19}{90}), S_{21}(\frac {71}{90})}$, where $\gb_m\sci S^{(2)}(0)$ with $\te{card}(\gb_m)=m-(F(2)+\te{card}(C(0))+\te{card}(C(1)))=23-20=3$. Then,
\[\gb_m^\ast=\set{S_{111}(\frac 12), S_{11}T_1(\frac 12), S_{121}(\frac 12), S_{12}T_1(\frac 12), S_{21}T_2(\frac 12), S_{212}(\frac 12)}.\]
Hence,
\begin{align*}
\ga_{23}(P)=&\Big(\Big((\ga_{F(2)}\setminus (C(0)\uu C(1)\Big)\uu \Big((C^{(2)}(0)\uu C^{(2)}(1)) \setminus(C(0)\uu C(1))\Big)\setminus \gb_m\Big)\uu \gb_m^\ast\\
&=\Big(S(1)\uu S^{(2)}(0)\uu S^{(2)}(2)\setminus \gb_m\Big)\uu \gb_m^\ast\\
&=\mathop{\uu}\limits_{\go\in I} S_\go(\ga_2(\gn)) \uu \ga_{2^3}(\gn) \uu \set{S_{11}(\frac {71}{90}), S_{12}(\frac {71}{90}), S_{21}(\frac {19}{90}), S_{22}(\frac {19}{90}), S_{22}(\frac {71}{90})}\\
&\qquad \qquad \uu \set{S_{111}(\frac 12), S_{11}T_1(\frac 12), S_{121}(\frac 12), S_{12}T_1(\frac 12), S_{21}T_2(\frac 12), S_{212}(\frac 12)},
\end{align*}
The number of optimal sets of 23-means is given by $^8 C_3 =56.$
\end{example}

\begin{example} Let $m=31$. As in the previous example, we have $n_m=2$, $C(0)=S(0)$, $C(1)=S(2)$, $C(2)=S^{(2)}(0)$, and $C(3)=S(1)$. Since $F(2)+\te{card}(C(0))+\te{card}(C(1))+\te{card}(C(2))=12+4+4+8=28<m<32=F(2)+\te{card}(C(0))+\te{card}(C(1))+\te{card}(C(2))+\te{card}(C(3))$, we have $\ell_m=2$, and so
$C(\ell_m+1)=S(1)$. Take $\gb_m=\set{S_1T_1(\frac 12), S_1T_2(\frac 12), S_2T_1(\frac 12)}$, where $\gb_m\sci S(1)$ with $\te{card}(\gb_m)=m-28=3$. Then,
\[\gb_m^\ast=\set{S_1T_1T_1(\frac 12), S_1T_1T_2(\frac 12), S_1T_2T_1(\frac 12), S_1T_2T_2(\frac 12), S_2T_1T_1(\frac 12), S_2T_1T_2(\frac 12)}.\]
Hence,
\begin{align*}
\ga_{23}(P)=&\Big(\Big((\ga_{F(2)}\setminus (C(0)\uu C(1)\uu C(2)\Big)\\
&\uu \Big((C^{(2)}(0)\uu C^{(2)}(1)\uu C^{(2)}(2)) \setminus(C(0)\uu C(1)\uu C(2))\Big)\setminus \gb_m\Big)\uu \gb_m^\ast\\
&=\Big((S(1)\uu S^{(2)}(2)\uu S^{(2)(2)}(0))\setminus \gb_m\Big)\uu \gb_m^\ast\\
&=\ga_{2^3}(\gn)\uu (\mathop{\uu}\limits_{\go\in I^2} S_\go(\ga_{2}(\gn)))\uu \set{S_\go(\frac 12) : \go \in I^3} \uu \set{S_2T_2(\frac 12)}\uu \gb_m^\ast
\end{align*}
is an optimal set of $31$-means. The number of optimal sets of 31-means is given by $^4 C_3 =4.$
\end{example}

\subsection{Asymptotics for the $n$th quantization error $V_n(P)$}
Having the optimal sets and the corresponding quantization errors are explicitly known, we now turn to the investigation of the quantization dimension and the quantization coefficients for the condensation measure $P$. If $D(\gn)$ is the quantization dimension of $\gn$, then $(\frac 12 (\frac 13)^2)^{\frac {D(\gn)}{2+D(\gn)}}+(\frac 12 (\frac 13)^2)^{\frac {D(\gn)}{2+D(\gn)}}=1$ (see \cite{GL2}), which yields $D(\gn)=\frac{\log 2}{\log 3}=\gb$, where $\gb$ is the Hausdorff dimension of the Cantor set generated by the similarity maps $T_1$ and $T_2$ (i.e., $\gb$ satisfies $(\frac 1 3)^\gb+(\frac 13)^\gb=1$).

\begin{theorem} \label{Th2}
Let $P$ be the condensation measure associated with the self-similar measure $\gn$. Then, $\lim_{n\to \infty} \frac{2 \log n}{\log V_n(P)}=D(\nu) $, i.e., the quantization dimension $D(P)$ of the measure $P$ exists and is equal to $D(\nu)$.
\end{theorem}

\begin{proof}
For $n\in \D N$, $n\geq 3$,  let $\ell(n)$ be the least positive integer such that $F(2\ell(n)+1)\leq n<F(2(\ell(n)+1)+1)$. Then,
$V_{F(2(\ell(n)+1)+1)}<V_n\leq V_{F(2\ell(n)+1)}$. Thus, we have
\begin{align*}
\frac {2\log\left(F(2\ell(n)+1)\right)}{-\log\left(V_{F(2(\ell(n)+1)+1)}\right)}< \frac {2\log n}{-\log V_n}< \frac {2\log\left(F(2(\ell(n)+1)+1)\right)}{-\log\left(V_{F(2\ell(n)+1)}\right)}
\end{align*}
By Lemma~\ref{lemma56} and Lemma~\ref{lemma111}, we have  $F(2\ell(n)+1)=2\cdot 2^{\ell(n)}+3\cdot 2^{3\ell(n)}$, and
$V_{F(2(\ell(n)+1)+1)}=9^{-3(\ell(n)+1)} \frac{79}{7224} \Big(1-\Big(\frac {324}{625}\Big)^{\ell(n)+1}\Big) - \frac{3571}{44347500}\Big(\frac 2{75}\Big)^{2\ell(n)+1}$. Notice that  $\ell(n)\to \infty$ whenever $n\to\infty$.

Again,
\begin{align*}
&\lim_{\ell(n)\to \infty} \frac {\log\left(F(2\ell(n)+1)\right)}{-\log\left(V_{F(2(\ell(n)+1)+1)}\right)} \Big ( \frac{\infty}{\infty} \te{ form} \Big)=\lim_{\ell(n)\to \infty} \frac {2\cdot 2^{\ell(n)}\log 2+9\cdot 2^{3\ell(n)}\log 2}{2\cdot 2^{\ell(n)}+3\cdot 2^{3\ell(n)}}\\
& \cdot\frac{9^{-3(\ell(n)+1)} \frac{79}{7224} \Big(1-\Big(\frac {324}{625}\Big)^{\ell(n)+1}\Big) - \frac{3571}{44347500}\Big(\frac 2{75}\Big)^{2\ell(n)+1}}{9^{-3(\ell(n)+1)}\Big(3\log 9 \frac{79}{7224} \Big(1-\Big(\frac {324}{625}\Big)^{\ell(n)+1}\Big)+\frac{79}{7224} \Big(\frac {324}{625}\Big)^{\ell(n)+1} \log\frac {324}{625}\Big) + \frac{3571}{44347500}\Big(\frac 2{75}\Big)^{2\ell(n)+1} 2\log \frac 2{75} }\\
&=\frac{\log 2}{2\log 3},
\end{align*}
and so
\[\lim_{\ell(n)\to \infty} \frac {2\log\left(F(2\ell(n)+1)\right)}{-\log\left(V_{F(2(\ell(n)+1)+1)}\right)} =\frac{\log 2}{\log 3}.  \te{ Similarly, } \lim_{\ell(n)\to \infty} \frac {2\log\left(F(2(\ell(n)+1)+1)\right)}{-\log\left(V_{F(2\ell(n)+1)}\right)}=\frac{\log 2}{\log 3}. \]
Thus, $\frac{\log 2}{\log 3}\leq \liminf_n \frac{2\log n}{-\log V_n}\leq \limsup_n \frac{2\log n}{-\log V_n}\leq\frac{\log 2}{\log 3}$ implying the fact that the quantization dimension of the measure $P$ exists and is equal to $D(\nu)=\gb$.
\end{proof}

\begin{theorem}\label{Th3}
$\gb$-dimensional quantization coefficient for the condensation measure $P$ does not exist; however, the $\gb$-dimensional lower and upper quantization coefficients for $P$ are finite and positive.
\end{theorem}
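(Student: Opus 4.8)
The plan is to pin down the exact asymptotics of $n^{2/\gb}V_n(P)$ along the canonical subsequence $\set{F(n)}_{n\ge 1}$ using the closed forms in Lemma~\ref{lemma56} and Lemma~\ref{lemma111}, treating the two residue classes of $n$ modulo $2$ separately, and then to transfer the conclusion to all $n$ by an elementary monotonicity squeeze. The key numerical fact is that $\gb=\frac{\log 2}{\log 3}$ satisfies $\frac 2\gb=\log_2 9$, so that $2^{2/\gb}=9$ and $8^{2/\gb}=729$; these identities are what make the $F(n)$-computations close up cleanly.

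First I would compute the two parity limits. By Lemma~\ref{lemma56}, $F(2k+1)=3\cdot 2^{3k}\bigl(1+\tfrac23 2^{-k}\bigr)$ and $F(2k)=2^{3k}\bigl(1+2^{-k}\bigr)$, so $F(2k+1)^{2/\gb}=3^{2/\gb}\,729^{\,k}\bigl(1+o(1)\bigr)$ and $F(2k)^{2/\gb}=729^{\,k}\bigl(1+o(1)\bigr)$ as $k\to\infty$. By Lemma~\ref{lemma111}, $729^{\,k}V_{F(2k+1)}\to\frac{79}{7224}$, since the correction terms $\bigl(\tfrac{324}{625}\bigr)^{k}$ and $729^{\,k}\bigl(\tfrac 2{75}\bigr)^{2k-1}$ both tend to $0$; feeding this into the recursion $V_{F(2k)}=\frac 13\frac 1{9^{a(2k)}}W+\frac 2{75}V_{F(2k-1)}$ of Lemma~\ref{lemma111}, and using $729^{\,k}/9^{a(2k)}=81$, one gets $729^{\,k}V_{F(2k)}\to 27W+\frac 2{75}\cdot 729\cdot\frac{79}{7224}=:L_2$, an explicit positive rational. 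Hence the two subsequential limits
\begin{align*}
L_1:=\lim_{k\to\infty}F(2k+1)^{2/\gb}V_{F(2k+1)}=3^{2/\gb}\cdot\frac{79}{7224},\qquad
L_2:=\lim_{k\to\infty}F(2k)^{2/\gb}V_{F(2k)}
\end{align*}
are both finite and strictly positive. A direct comparison of the two values (numerically $L_1\approx 0.3559$ and $L_2\approx 0.3476$) shows $L_1\neq L_2$. Since $\set{F(2k+1)}$ and $\set{F(2k)}$ are subsequences tending to $\infty$, this forces $\liminf_{n}n^{2/\gb}V_n(P)\le\min\set{L_1,L_2}<\max\set{L_1,L_2}\le\limsup_{n}n^{2/\gb}V_n(P)$; in particular the $\gb$-dimensional quantization coefficient of $P$ does not exist.

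It remains to see that both one-sided coefficients are finite and positive, and for this I would only use the obvious monotonicity $V_{n+1}(P)\le V_n(P)$. For $n$ with $F(m)\le n<F(m+1)$ this gives $V_{F(m+1)}\le V_n\le V_{F(m)}$, hence
\begin{align*}
\Bigl(\tfrac{F(m)}{F(m+1)}\Bigr)^{2/\gb}F(m+1)^{2/\gb}V_{F(m+1)}\;\le\;n^{2/\gb}V_n(P)\;\le\;\Bigl(\tfrac{F(m+1)}{F(m)}\Bigr)^{2/\gb}F(m)^{2/\gb}V_{F(m)}.
\end{align*}
From Lemma~\ref{lemma56} one checks $1<\tfrac{F(m+1)}{F(m)}\le 3$ for every $m\ge 1$, so the outer prefactors lie in a fixed compact subset of $(0,\infty)$; and since $m\mapsto F(m)^{2/\gb}V_{F(m)}$ has all terms positive and converges to $L_1$ along odd $m$ and to $L_2$ along even $m$, it is bounded above and bounded below away from $0$. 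Combining, there are constants $0<c\le C<\infty$ with $c\le n^{2/\gb}V_n(P)\le C$ for all large $n$, so the lower and upper $\gb$-dimensional quantization coefficients are finite and positive.

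The one genuinely delicate point is certifying the strict inequality $L_1\neq L_2$: the two values are close (within about two percent) and $L_1$ is irrational (it involves $3^{2/\gb}$, whereas $L_2$ is rational), so one must keep the closed forms of Lemma~\ref{lemma111} exact and exhibit a separating rational (for instance verify $L_2<\tfrac 7{20}<L_1$) rather than round. Everything else — the two limit evaluations and the squeeze — is routine.
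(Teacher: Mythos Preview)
Your proof is correct and follows essentially the same approach as the paper: compute the two parity limits $L_1,L_2$ of $F(n)^{2/\gb}V_{F(n)}$ via Lemma~\ref{lemma56} and Lemma~\ref{lemma111}, observe they differ, then squeeze $n^{2/\gb}V_n$ between consecutive $F$-values using the monotonicity of $V_n$. The only minor differences are that the paper squeezes between consecutive \emph{odd} indices $F(2\ell+1)$ and $F(2\ell+3)$ (yielding looser explicit bounds), and it simply asserts $L_1\neq L_2$ without the separating-rational check you correctly flag as needed.
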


\begin{proof}
Since $\gb=\frac {\log 2}{\log 3}$ for any $k\geq 2$, we have
\begin{equation} \label{eq49} F(2k+1)^{\frac 2 \gb}=(2 \cdot 2^{2k}+3\cdot 2^{3k})^{\frac 2 \gb}=2^{\frac {6k}{\gb}}(\frac 2{2^k}+3)^{\frac 2\gb}=9^{3k}(\frac 2{2^k}+3)^{\frac 2\gb}.
\end{equation}
Similarly, $F(2(k+1))^{\frac 2 \gb}=9^{3k}(\frac 4{2^k}+8)^{\frac 2\gb}$. Moreover, $9^{a(2k+1)}=9^{3k}$ and $9^{a(2(k+1))}=9^{3(k+1)-2}=9^{3k+1}$, and $9^{3k}(\frac 2{75})^{2k-1}=(\frac{324}{625})^{k}\frac{75}{2}$. Then, by Lemma~\ref{lemma111}, we have
\begin{align*}
F(2k+1)^{\frac 2 \gb}V_{F(2k+1)}(P)=(\frac 2{2^k}+3)^{\frac 2\gb}\Big(\frac{79}{7224} \Big(1-\Big(\frac {324}{625}\Big)^k\Big)- \frac{3571}{44347500}\Big(\frac{324}{625}\Big)^{k}\frac{75}{2}\Big)
\end{align*}
yielding
\begin{equation} \label{eq50} \mathop{\lim}\limits_{k\to \infty} F(2k+1)^{\frac 2 \gb}V_{F(2k+1)}(P)=3^{\frac 2\gb} \frac{79}{7224},
\end{equation}  and
\begin{align*}
F(2(k+1))^{\frac 2 \gb}V_{F(2(k+1))}(P)=(\frac 4{2^k}+8)^{\frac 2\gb}\Big(\frac 1{27} W+\frac 2{75} \frac{79}{7224} \Big(1-\Big(\frac {324}{625}\Big)^k\Big)- \frac{3571}{44347500}\Big(\frac{324}{625}\Big)^{k}\Big)
\end{align*}
yielding $\mathop{\lim}\limits_{k\to \infty} F(2(k+1))^{\frac 2 \gb}V_{F(2(k+1))}(P)=8^{\frac 2 \gb}\Big(\frac 1{27} W+\frac 2{75} \frac{79}{7224}\Big)$.
Since $(F(2k+1)^{\frac 2 \gb}V_{F(2k+1)}(P))_{k\geq 2}$ and $(F(2(k+1))^{\frac 2 \gb}V_{F(2(k+1))}(P))_{k\geq 2}$ are two subsequences of $(n^{\frac 2\gb}V_n(P))_{n\in \D N}$ having two different limits, we can say that the sequence $(n^{\frac 2\gb}V_n(P))_{n\in \D N}$ does not converge, in other words, the $\gb$-dimensional quantization coefficient for $P$ does not exist. For $n\in \D N$, $n\geq 3$,  let $\ell(n)$ be the least positive integer such that $F(2\ell(n)+1)\leq n<F(2(\ell(n)+1)+1)$. Then,
$V_{F(2(\ell(n)+1)+1)}<V_n\leq V_{F(2\ell(n)+1)}$ implying
\[F(2\ell(n)+1)^{\frac 2\gb}V_{F(2(\ell(n)+1)+1)}<n^{\frac 2\gb} V_n< F(2(\ell(n)+1)+1)^{\frac 2\gb} V_{F(2\ell(n)+1)}. \]
As $\ell(n)\to \infty$ whenever $n\to \infty$, by equation~\eqref{eq49}, we have
\[\lim_{n\to \infty} \frac{F(2\ell(n)+1)}{F(2(\ell(n)+1)+1)}=\lim_{\ell(n)\to \infty}\frac{9^{3\ell(n)}(\frac 2{2^{\ell(n)}}+3)^{\frac 2\gb}}{9^{3{\ell(n)+1}}(\frac 2{2^{\ell(n)+1}}+3)^{\frac 2\gb}}=\frac 1 9.\]
Hence, using \eqref{eq50}, we have
\[\lim_{n\to \infty} F(2\ell(n)+1)^{\frac 2\gb}V_{F(2(\ell(n)+1)+1)}=\frac 19 \lim_{\ell(n)\to \infty} F(2(\ell(n)+1)+1)^{\frac 2\gb}V_{F(2(\ell(n)+1)+1)}=\frac 19 \Big(3^{\frac 2\gb} \frac{79}{7224}\Big),\]
and similarly,
\[\lim_{n\to \infty} F(2(\ell(n)+1)+1)^{\frac 2 \gb} V_{F(2\ell(n)+1)}=9 \lim_{\ell(n)\to \infty} F(2\ell(n)+1)^{\frac 2\gb}V_{F(2\ell(n)+1)}= 9 \Big(3^{\frac 2\gb} \frac{79}{7224}\Big), \]
yielding the fact that $\frac 19 (3^{\frac 2\gb} \frac{79}{7224})\leq \mathop{\liminf}\limits_{n\to \infty} n^{\frac 2\gb} V_n\leq \mathop{\limsup}\limits_{n\to \infty} n^{\frac 2\gb} V_n\leq 9(3^{\frac 2\gb} \frac{79}{7224})$, i.e., $\gb$-dimensional lower and upper quantization coefficients for $P$ are finite and positive.
\end{proof}

Recall that the critical value of the condensation system under consideration is the number $\gk$ satisfying $(\frac 13 (\frac 15)^2)^{\frac {\gk}{2+\gk}}=\frac{1}{2}.$  Hence, $\gk=\frac{2\log 2}{\log75-\log 2} ; $ on the other hand, $D(\gn)=\frac{\log2}{\log 3}>\gk$. Therefore, by Theorem~\ref{Th2}, it follows that

\begin{prop} \label{prop35}
$D(P)=\max \set{\gk, D(\gn)} .$
\end{prop}

\section{Condensation measure $P$ with self-similar measure $\gn$ satisfying $D(\gn)<\gk$}\label{third}

 In this section, we study the optimal quantization for the condensation measure $P$ generated by the condensation system $(\set{S_1, S_2}, (\frac 13, \frac 13, \frac 13), \gn)$, where the self-similar measure $\gn$ is given by $\gn=\frac 12 \gn \circ T_1^{-1}+\frac 12\gn\circ T_2^{-1}$ with $T_1(x)=\frac{1}{7}x+\frac{12}{35}$, and $T_2(x)=\frac{1}{7}x+\frac{18}{35}$ for all $x\in \D R$ (i.e., the case $s=\frac{1}{7} $.)  From the general results obtained in Section~\ref{first}, we have

\begin{itemize}

\item $E(\nu)=\frac12,\ W=V(\nu)=\frac{3}{400};\ E(P)=\frac12, \ V=V(P)=\frac{131}{1168}, $
\item $\alpha_1=\{\frac12 \}, $  with $V_1=V(P), $
\item $\alpha_2 =\{\frac{43}{210}, \frac{167}{210} \} , \ V_2=\frac{321827}{12877200}, $ and
\item $ \alpha_3 =\{\frac{1}{10}, \frac{1}{2}, \frac{9}{10} \} , \ V_3=\frac{481}{87600}. $

\end{itemize}

Below, if the arguments of some statements are exactly the same as their counterparts in the previous section, we will omit their proofs to avoid repetition.

Notice that in this case, we have $V_{2^n}(\gn)=\frac 1{49^n} W$, and
\[ \int_{T_\go(L)} (x-x_0)^2 d\gn(x) =\frac 1{2^k} \Big(\frac 1{49^k} W  +(T_\go(\frac 12)-x_0)^2\Big).\]


\begin{prop1}  \label{prop300} The set $\set{S_1(\frac 12), T_1(\frac 12), T_2(\frac 12), S_2(\frac 12)}$ is an optimal set of four-means with quantization error $V_4=\frac{13057}{4292400}. $
\end{prop1}

\begin{prop1}  \label{prop00300} Let $\ga_n$ be an optimal set of $n$-means for all $n\geq 3$. Then, $\ga_n\ii J_1\neq \es$, $\ga_n \ii J_2\neq \es$, and $\ga_n\ii L\neq \es$. Moreover, $\ga_n$ does not contain any point from the open intervals $(\frac 15, \frac 25)$ and $(\frac 35, \frac 45)$.
\end{prop1}
\medskip

\subsection{Canonical sequence and optimal quantization}

In this section, the two canonical sequences are $\set{a(n)}_{n\geq 1}$ and  $\set{F(n)}_{n\geq 1}$ given by the following definition.
\begin{defi} \label{defi300}  The canonical sequences for this condensation system $(\mathcal{S}, \bold{p}, \nu)$ is defined as
$$ \aligned &a(n)=\left\{\aligned & a(1)=1  \\
 & a(n)=n-1 \ \text{for all}\ n\geq 2, \ \text{and}
 \endaligned \right. \\
& F(n)=(n+3) 2^{n-1} \ \text{for all} \ n\geq 1. \endaligned $$
\end{defi}

From these definitions it follows that, for any $n \geq 1, $ $2^{a(n+1)}+2F(n)=2^{n}+(n+3)2^n=(n+4)2^n=F(n+1);$ hence, we have

\begin{lemma}
For the canonical sequences $a(n)$ and $F(n)$, $F(n+1)=2^{a(n+1)}+2 F(n)$.
\end{lemma}
For $1\leq \ell\leq n$, write  $S(\ell):=\uu_{\go \in I^{n-\ell}} S_\go(\ga_{2^{a(\ell)}}(\gn))$ and $S^{(2)}(\ell):=\uu_{\go \in I^{n-\ell}} S_\go(\ga_{2^{a(\ell)+1}}(\gn))$. Notice that if $\ell=n$, then $S(n)= \ga_{2^{a(n)}}(\gn)$. Moreover, write
\begin{align*} S(0):&=\set{S_\go(\frac 12) : \go\in I^n},  \\
 S^{(2)}(0):&=\set{S_\go(\ga_2(P)) : \go \in I^{n}}=\set{S_\go(\frac {43}{210}), S_\go(\frac{167}{210}) : \go \in I^{n}},
\te{ and } \\
S^{(2)(2)}(0):&=\uu_{\go \in I^{n}} S_\go(\ga_{2^{a(1)}}(\gn))\uu \set{S_\go(\frac 12) : \go \in I^{n+1}}.
\end{align*}
 For any $\ell\in \D N\uu \set{0}$, if $A:=S(\ell)$, we identify $S^{(2)}(\ell)$ and $S^{(2)(2)}(\ell)$ respectively by $A^{(2)}$ and $A^{(2)(2)}$.  For $n\in\D N$, set
\begin{equation} \label{eq67} \ga_{F(n)}:=S(n)\uu S(n-1)\uu S(n-2)\uu \cdots \uu S(1)\uu S(0), \ \text{and} \end{equation}
\[SF(n):=\set{S(n), S(n-1), S(n-2), \cdots, S(1), S(0)}.\]
In addition, write
\begin{align*} \label{eq35} SF^\ast(n): =\set{S(n), S(n-1), \cdots,  S(0),  S^{(2)}(0)}.
\end{align*}
The order relation $\succ $ on the elements of $SF^\ast(n)$ and $SF(n)$ are defined analogously as it is defined on the elements of $SF^\ast(n)$ and $SF(n)$ in Section~\ref{first}.

\begin{remark}\label{remark100} By Definition~\ref{defi300},
$ \ga_{F(n)}=S_1(\ga_{F(n-1)})\uu \ga_{2^{a(n)}}(\gn)\uu S_2(\ga_{F(n-1)}). $

\end{remark}
\begin{lemma} \label{lemma1900}  Let $\succ $ be the order relation on $SF^\ast(n)$. Then,
\begin{align*}
S(2)\succ S(0)&\succ S(3)\succ S(4)\succ \cdots\succ S(11) \succ S^{(2)}(0)\succ S(12)\\
&\succ S(13)\succ \cdots\succ S(18)\succ S(1)\succ S(19)\succ S(20)\succ \cdots.
\end{align*}
\end{lemma}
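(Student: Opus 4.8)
The plan is to reduce the displayed chain to a comparison of the distortion error contributed by a single element of each of the sets $S(n),\dots,S(1),S(0),S^{(2)}(0)$ making up $SF^\ast(n)$ in its own Voronoi region; this is a legitimate reduction because, as noted just before the order $\succ$ was introduced, all elements of any one of these sets contribute the same error. An element of $S(\ell)$ ($\ell\geq 1$) is $S_\go T_\gt(\tfrac12)$ for some $\go\in I^{n-\ell}$ and $\gt\in I^{a(\ell)}$, and since the sets $S_\go T_\gt(L)$, $J_\go$ and $J_{\go i}\uu S_\go T_i(L)$ entering $SF^\ast(n)$ are pairwise disjoint and separated, the error it contributes equals $\int_{S_\go T_\gt(L)}(x-S_\go T_\gt(\tfrac12))^2\,dP$. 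Using Lemma~\ref{lemma1}, the fact that $S_\go$ contracts distances by $5^{-(n-\ell)}$, and the identity $\int_{T_\gt(L)}(x-T_\gt(\tfrac12))^2\,d\gn=98^{-a(\ell)}W$ with $W=V(\gn)=\tfrac{3}{400}$ (as computed in this section), I would obtain that every element of $S(\ell)$ contributes
\[
E(\ell):=\frac{W}{3\cdot 75^{\,n-\ell}\,98^{\,a(\ell)}}\qquad(\ell\geq 1),
\]
that every element of $S(0)=\set{S_\go(\tfrac12):\go\in I^n}$ contributes $E(0):=V(P)\,75^{-n}$ by \eqref{eq4}, and that every element of $S^{(2)}(0)$, whose relevant Voronoi region is of the form $J_{\go i}\uu S_\go T_i(L)$, contributes $E^{(2)}(0):=\tfrac12 V_2(P)\,75^{-n}$ by the same scaling argument applied to the optimal two-means of Proposition~\ref{prop100}.

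After dividing every error by $\tfrac{W}{3}\,75^{-n}$, the whole lemma becomes a statement about the ordering of the positive reals $c(\ell):=75^{\ell}/98^{\,a(\ell)}$ ($\ell\geq 1$), $c_0:=3V(P)/W=\tfrac{3275}{73}$ and $c_0^{(2)}:=3V_2(P)/(2W)=\tfrac{321827}{64386}$. For $\ell\geq 2$ one has $a(\ell)=\ell-1$, hence $c(\ell)=75\,(75/98)^{\ell-1}$ is strictly decreasing in $\ell$, which already gives $S(2)\succ S(3)\succ\cdots\succ S(n)$. I would then locate the three ``special'' values: $c(3)<c_0<c(2)$ (equivalently $75^{3}\cdot 73<3275\cdot 98^{2}$ and $3275\cdot 98<75^{2}\cdot 73$), so $S(0)$ sits between $S(2)$ and $S(3)$; a direct numerical check gives $c(12)<c_0^{(2)}<c(11)$, so $S^{(2)}(0)$ sits between $S(11)$ and $S(12)$; and since $c(1)=75/98$ and $c(\ell)>c(1)\iff 75^{\ell-1}>98^{\ell-2}$, which holds precisely for $2\leq\ell\leq 18$, one gets $c(18)>c(1)>c(19)$, so $S(1)$ sits between $S(18)$ and $S(19)$. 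Combining these with the monotonicity of $(c(\ell))_{\ell\geq 2}$ yields exactly the asserted chain (for small $n$ it is its truncation to the sets actually present in $SF^\ast(n)$).

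The only genuinely delicate point is the first step: pinning down $E(\ell)$, $E(0)$ and $E^{(2)}(0)$ requires care with the two nested contraction scales — $S_i$ of ratio $\tfrac15$ and $T_i$ of ratio $\tfrac17$ — and with the probability weights $\tfrac13$ and $\tfrac12$ that multiply them; once these three formulas are correct, everything else is a handful of elementary integer inequalities plus one monotonicity observation. I would also note that Lemma~\ref{lemma1900} plays, for this $\gn$, the role that Lemma~\ref{lemma19} and Proposition~\ref{prop5555} play in Section~\ref{first}: it is the input needed to prove (via the analogues of Lemma~\ref{lemma71} and Remark~\ref{remark45}) that the sets $\ga_{F(n)}$ of \eqref{eq67} are optimal and, through Note~\ref{note0}, to describe the optimal sets of $m$-means for every $m\in\D N$.
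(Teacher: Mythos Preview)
Your proposal is correct and follows essentially the same approach as the paper's proof: compute the per-element distortion error for each set in $SF^\ast(n)$ (namely $\tfrac{1}{3}\,75^{-(n-k)}\,98^{-a(k)}W$ for $S(k)$, $75^{-n}V$ for $S(0)$, and $\tfrac12\,75^{-n}V_2$ for $S^{(2)}(0)$), then verify the claimed chain by checking the monotonicity of $S(k)$ for $k\geq 2$ and locating $S(0)$, $S^{(2)}(0)$, $S(1)$ between the appropriate consecutive terms via elementary numerical inequalities. Your normalization by $\tfrac{W}{3}\,75^{-n}$ to work with the quantities $c(\ell)$, $c_0$, $c_0^{(2)}$ is a convenient repackaging but not a different idea; the paper simply compares the unnormalized errors directly.
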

\begin{proof} For any $n\geq k\geq 1$, the distortion error due to any element in the set $S(k)$ is given by $\frac 1{75^{n-k}}\frac 13 \frac 1{2^{a(k)}} \frac 1{49^{a(k)}}W$. On the other hand, the distortion error due to to the set $S(0)$ and $S^{(2)}(0)$ are, respectively, given by $\frac{1}{75^n}V$ and $\frac 1{75^n} \frac 1 2 V_2$. Hence, $S(2)\succ S(0)\succ S(3)$ will be true if $\frac 13 \frac {75^2} {98} W>V>\frac 13 \frac {75^3} {98^2} W$ which is clearly true. Thus, $S(2)\succ S(0)\succ S(3)$. For $n>k\geq 2$, the inequality $S(k)\succ S(k+1)$ is true if $1>\frac {75}{98}$ which is obvious. Moreover, since $\frac 13 \frac {75^{11}} {98^{10}} W>\frac 12V_2>\frac 13 \frac {75^{12}} {98^{11}} W$, we have $S(11)\succ S^{(2)}(0)\succ S(12)$. Again, $\frac{75^{18}}{98^{17}}>\frac{75}{98}>\frac{75^{19}}{98^{18}}$ yields $S(18)\succ S(1)\succ S(19)$. Combining all these inequalities, we see that the lemma follows.
\end{proof}

\begin{remark} Lemma~\ref{lemma1900} implies that if $n=1$, then $S(0)\succ S^{(2)}(0)\succ S(1)$; if $n=2$, then $S(2)\succ S(0)\succ S^{(2)}(0)\succ S(1)$; if $n=3$, then $S(2)\succ S(0)\succ S(3)\succ S^{(2)}(0)\succ S(1)$; and so on.
\end{remark}

From the definitions of $S(k)$ for all $0\leq k\leq n$ and $S^{(2)}(0)$ it follows that

\begin{lemma} \label{lemma4300}
Let $\ga_{F(n)}$ and $SF(n)$ be the sets as defined before. Then,
\[\ga_{F(n+1)}=\mathop{\uu}\limits_{k=1}^n S^{(2)}(k) \uu S^{(2)(2)}(0).\]
\end{lemma}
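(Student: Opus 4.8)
The plan is to derive the identity from the level‑$(n+1)$ instance of the recursion in Remark~\ref{remark100},
$\ga_{F(n+1)}=S_1(\ga_{F(n)})\uu \ga_{2^{a(n+1)}}(\gn)\uu S_2(\ga_{F(n)})$,
together with the explicit description \eqref{eq67} of $\ga_{F(n)}$ as $S(n)\uu S(n-1)\uu\cdots\uu S(1)\uu S(0)$ in terms of the level‑$n$ symbols. Since $n+1\geq 2$, Definition~\ref{defi300} gives $a(n+1)=n$, so the middle block is $\ga_{2^n}(\gn)$. The idea is then to push $S_1$ and $S_2$ through that union and to recognize each of the resulting blocks, together with the middle block, as one of the level‑$n$ symbols $S^{(2)}(k)$ ($1\le k\le n$) or as $S^{(2)(2)}(0)$ appearing on the right‑hand side of the lemma, using only the bookkeeping of cylinder words and the arithmetic of the sequence $a$.

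The elementary steps are as follows. Since $S_i\circ S_\go=S_{i\go}$ and $I^{m+1}$ is the disjoint union of the words beginning with $1$ and those beginning with $2$, one obtains $S_1(S(k))\uu S_2(S(k))=\uu_{\go\in I^{n-k+1}}S_\go(\ga_{2^{a(k)}}(\gn))$ for $1\le k\le n$, and $S_1(S(0))\uu S_2(S(0))=\set{S_\go(\frac12):\go\in I^{n+1}}$. For $3\le k\le n$ the arithmetic identity $a(k)=a(k-1)+1$ from Definition~\ref{defi300}, together with $n-k+1=n-(k-1)$, rewrites the block coming from $S(k)$ as $S^{(2)}(k-1)$; the middle block $\ga_{2^n}(\gn)=\ga_{2^{a(n)+1}}(\gn)$ is $S^{(2)}(n)$ because $I^0$ is a singleton; and the block coming from $S(1)$, namely $\uu_{\go\in I^n}S_\go(\ga_{2^{a(1)}}(\gn))$, is by definition the first half of $S^{(2)(2)}(0)$, whose second half is exactly the block $\set{S_\go(\frac12):\go\in I^{n+1}}$ coming from $S(0)$. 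It remains to match the single leftover block, the one coming from $S(2)$, with $S^{(2)}(1)$; once that is done, collecting all the identified blocks yields precisely $\uu_{k=1}^n S^{(2)}(k)\uu S^{(2)(2)}(0)$.

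To finish I would check that nothing is accidentally absorbed: the three outer pieces of the recursion lie in $J_1$, in $C\ci L$, and in $J_2$, which are pairwise disjoint by the strong separation of $J_1,L,J_2$, and within each of them the sub‑blocks sit in disjoint cylinder sets, so every union in sight is disjoint and the resulting count agrees with $F(n+1)=(n+4)2^n$ against $F(n)=(n+3)2^{n-1}$. The step that needs genuine care is this bottom layer of the hierarchy: the sequence $a$ has the initial plateau $a(1)=a(2)=1$, so the clean shift $k\mapsto k-1$ that carries a level‑$(n+1)$ block to a level‑$n$ symbol $S^{(2)}(\cdot)$ is irregular there, and reconciling the block coming from $S(2)$ with $S^{(2)}(1)$ (and the blocks from $S(1),S(0)$ with $S^{(2)(2)}(0)$) is the crux of the argument; everything above that bottom layer is routine relabelling of cylinder words.
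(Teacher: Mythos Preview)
Your route via the recursion of Remark~\ref{remark100} is essentially the same unwinding of definitions the paper has in mind; the paper's own proof is the one-liner ``clearly follows from the definitions of $S(k)$ for all $0\le k\le n$, and $S^{(2)}(0)$'', so your expanded version is in fact more careful than the original. You are also right that the only nontrivial step is the bottom layer, where the plateau $a(1)=a(2)=1$ disrupts the clean shift $k\mapsto k-1$.

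However, precisely that step fails as you have set it up, and it cannot be salvaged. The block produced from $S(2)$ is
\[
S_1(S(2))\uu S_2(S(2))=\uu_{\go\in I^{\,n-1}}S_\go\bigl(\ga_{2^{a(2)}}(\gn)\bigr)=\uu_{\go\in I^{\,n-1}}S_\go\bigl(\ga_{2}(\gn)\bigr),
\]
whereas by definition $S^{(2)}(1)=\uu_{\go\in I^{\,n-1}}S_\go\bigl(\ga_{2^{a(1)+1}}(\gn)\bigr)=\uu_{\go\in I^{\,n-1}}S_\go\bigl(\ga_{4}(\gn)\bigr)$. These are distinct sets, of cardinalities $2^{n}$ and $2^{n+1}$ respectively, so the ``reconciliation'' you postpone cannot be carried out. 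A global count confirms the mismatch: the right-hand side of the displayed identity has
\[
|S^{(2)}(1)|+\sum_{k=2}^{n}|S^{(2)}(k)|+|S^{(2)(2)}(0)|=2^{n+1}+(n-1)2^{n}+2^{n+2}=(n+5)\,2^{n},
\]
while $F(n+1)=(n+4)\,2^{n}$. In other words the identity as literally written is off at $k=1$; the block that actually belongs there is $\uu_{\go\in I^{\,n-1}}S_\go(\ga_{2}(\gn))$, i.e.\ $S(1)$ at level $n$, not $S^{(2)}(1)$. The paper's one-line proof does not touch this point, so the discrepancy lies in the stated formula rather than in your method; your decomposition is correct and in fact exposes the issue.
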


\begin{lemma}\label{lemma7100}  For any two sets $A, B\in SF^\ast(n)$, let $A\succ B$. Then, the distortion error due to the set $ (SF^\ast(n)\setminus A)\uu A^{(2)}\uu B$ is less than the distortion error due to the set $ (SF^\ast (n)\setminus B)\uu B^{(2)}\uu A$.
\end{lemma}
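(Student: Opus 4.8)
For any two sets $A, B \in SF^\ast(n)$ with $A \succ B$, the distortion error due to $(SF^\ast(n) \setminus A) \uu A^{(2)} \uu B$ is less than the distortion error due to $(SF^\ast(n) \setminus B) \uu B^{(2)} \uu A$.

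The plan is to mirror the proof of Lemma~\ref{lemma71}, and of its $SF^\ast$-version Lemma~\ref{lemma72}, from Section~\ref{first}, now for the self-similar measure $\gn$ of contraction ratio $\frac17$. The first step is the structural observation that turns the comparison into an additive one. In the configuration $\uu_{C\in SF^\ast(n)}C$ the elements of distinct blocks $C\ne C'$ lie in disjoint pieces of the IFS partition of $J=[0,1]$ (pieces of the form $S_\go(L)$ or $J_\go$), and by Proposition~\ref{prop00300} no quantizer of the restricted measure involved contains a point of either gap $(\frac15,\frac25)$ or $(\frac35,\frac45)$ at any scale; since $P$ assigns no mass to these gaps, the portion of a point's Voronoi region carrying $P$-mass lies in the piece(s) attached to its own block. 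Hence, up to a $P$-null set, the union of the Voronoi regions of the elements of a block $C$ — its territory — depends only on $C$, and is left unchanged when $C$ is replaced by its one-step refinement $C^{(2)}$. Writing $V^\ast$ for the distortion error of $\uu_{C\in SF^\ast(n)}C$ and, for a block $C$, $\C E(C)$ for the total error its elements contribute on their Voronoi regions, it follows that the distortion error of $(SF^\ast(n)\setminus A)\uu A^{(2)}\uu B$ is $V^\ast-\C E(A)+\C E(A^{(2)})$ and that of $(SF^\ast(n)\setminus B)\uu B^{(2)}\uu A$ is $V^\ast-\C E(B)+\C E(B^{(2)})$, so the assertion is equivalent to
\[\C E(A)-\C E(A^{(2)})\;>\;\C E(B)-\C E(B^{(2)}).\]

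The second step is to compute $\C E(C)$ and $\C E(C^{(2)})$ in closed form for each possible shape of a block of $SF^\ast(n)$ — namely $C=S(k)$ with $1\le k\le n$, $C=S(0)$, and $C=S^{(2)}(0)$ — and compare the differences. Using Lemma~\ref{lemma1} (the self-similarity identity for $P$), Lemma~\ref{lemma200} (the values $E(\gn)=\frac12$, $W=\frac3{400}$), the formula $V_{2^m}(\gn)=\frac1{49^m}W$ recorded in the Note of this section, and Proposition~\ref{prop0}, each $\C E(C)$ is an explicit product of powers of $2$, $75$ and $98$ with $W$ (or, for the $S(0)$- and $S^{(2)}(0)$-blocks, with $V=V(P)$ or the second quantization error $V_2$), and $\C E(C^{(2)})=\lambda_C\,\C E(C)$ for an explicit constant $\lambda_C\in(0,1)$ — e.g. $\lambda_{S(k)}=V_{2^{a(k)+1}}(\gn)/V_{2^{a(k)}}(\gn)=\frac1{49}$. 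Writing $\C E(C)=(\#C)\,e_C$, where $e_C$ is the per-element error defining the relation $\succ$, the target inequality becomes $(1-\lambda_A)(\#A)e_A>(1-\lambda_B)(\#B)e_B$, which when $A$ and $B$ have the same shape is immediate from $A\succ B$, i.e. $e_A>e_B$.

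When $A$ and $B$ have different shapes (and possibly different cardinalities), one finishes exactly as in Lemma~\ref{lemma71}: there are only finitely many combinations of shapes, and for each one the required inequality is checked directly from the explicit expressions for $e_C$, $\#C$ and $\lambda_C$ together with the explicit total order on the blocks given in Lemma~\ref{lemma1900}. This last bookkeeping — keeping track of cardinalities (each refinement doubles the block size) and invoking the quantitative spacing of the $e_C$'s implicit in Lemma~\ref{lemma1900}, rather than the order relation alone — is the only delicate point, and is why the proof mirrors that of Lemma~\ref{lemma71} case by case rather than reducing to a one-line deduction. Once it is carried out, $\C E(A)-\C E(A^{(2)})>\C E(B)-\C E(B^{(2)})$ holds in every case, which is the assertion.
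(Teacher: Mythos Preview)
Your approach mirrors the paper's: reduce the comparison to an additive inequality between block errors and their one-step refinements, then verify case by case using the explicit formulas for $\C E(C)$ together with the order from Lemma~\ref{lemma1900}. The paper writes out only the case $A=S(k),\,B=S(k')$ with $2\le k<k'\le n$ explicitly (reducing it to $(\tfrac{98}{75})^{k'-k}>1$) and defers the remaining shape combinations to ``similarly'', which matches your level of detail.
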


\begin{proof} We have $SF^\ast(n)=\set{S(n), S(n-1), \cdots, S(1), S(0), S^{(2)}(0)}$. Let $V_{SF^\ast(n)}$ be the distortion error due to the set $SF^\ast(n)$. First, take $A=S(k)$ and $B=S(k')$ for some $2\leq k< k'\leq n$. Then, by Lemma~\ref{lemma1900}, $A\succ B$. The distortion error due to the set $(SF^\ast(n)\setminus A)\uu A^{(2)}\uu B$ is given by
\begin{align}\label{eq4500}
&V_{SF^\ast(n)}-(\frac 2{75})^{n-k} \frac 13 \frac W{49^{a(k)}}+(\frac 2{75})^{n-k} \frac 13 \frac W{49^{a(k)+1}}+(\frac 2{75})^{n-k'} \frac 13 \frac W{49^{a(k')}}\notag\\
&=V_{SF^\ast(n)}-(\frac 2{75})^{n-k} \frac 13 \frac{48W}{49^{k}}+(\frac 2{75})^{n-k'} \frac 13 \frac W{49^{k'-1}}
\end{align}
Similarly, The distortion error due to the set $(SF^\ast(n)\setminus B)\uu B^{(2)}\uu A$ is
\begin{align} \label{eq4600}
V_{SF^\ast(n)}-(\frac 2{75})^{n-k'} \frac 13\frac{48W}{49^{k'}}+(\frac 2{75})^{n-k} \frac 13 \frac W{49^{k-1}}.
\end{align}
Thus, \eqref{eq4500} will be less than \eqref{eq4600} if $(\frac {98}{75})^{k'-k}>1$, which is clearly true since $k'>k$. Similarly, we can prove the lemma for any two elements $A, B\in SF^\ast(n)$. Thus, the proof of the lemma is complete.
\end{proof}

\begin{prop} \label{prop720}
 For any $n\geq 1$ the set $\ga_{F(n)}$ is an optimal set of $F(n)$-means for the condensation measure $P$ whose quantization error is given by
 \[V_{F(n)}:=V_{F(n)}(P)=\left\{\begin{array} {ll} \frac{13057}{4292400} & \te{ if } n=1, \\
  \frac{69071}{6170325}\left(\frac{2}{75}\right)^{n-1}-\frac{3}{368} \left(\frac{1}{49}\right)^{n-1} & \te{ if } n\geq 2.
 \end{array}\right.\]
\end{prop}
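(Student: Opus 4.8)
The plan is to run the same induction that establishes Proposition~\ref{prop351}, but with the simpler sequences of Definition~\ref{defi300}. The base case $n=1$ is already contained in Proposition~\ref{prop300}: there $\ga_{F(1)}=\set{S_1(\frac 12),T_1(\frac 12),T_2(\frac 12),S_2(\frac 12)}$ is shown to be an optimal set of $F(1)=4$ means, and the asserted value $V_{F(1)}=\frac{13057}{4292400}$ is exactly the $V_4$ computed there. The inductive step will be: if $\ga_{F(n)}=\uu_{A\in SF(n)}A$ is an optimal set of $F(n)$-means, then $\ga_{F(n+1)}$ is an optimal set of $F(n+1)$-means; the error formula then drops out of a one-line linear recursion. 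A short separate verification is needed for $n=2$ (and, if convenient, $n=3$, along the lines of Case~1 and Case~3 of the proof of Proposition~\ref{prop351}): using Proposition~\ref{prop00300}, Proposition~\ref{prop200}, and Lemma~\ref{lemma4}, one checks by distortion comparison that an optimal set of $F(2)=10$ means must place $2^{a(2)}=2$ points in $L$ and $F(1)=4$ points in each of $J_1$, $J_2$, which forces it to equal $\ga_{F(2)}$.

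For the inductive step I would refine $\ga_{F(n)}$ one block at a time. Let $A(1)$ be the $\succ$-largest block of $SF(n)$, the succession being read off from Lemma~\ref{lemma1900}. By the $SF(n)$-analogue of Lemma~\ref{lemma7100} — proved by the same swap-of-distortion computation, using that optimal sets of $m$-means for $m\ge 3$ always meet $J_1$, $L$, $J_2$ and avoid the gaps $(\frac 15,\frac 25)$, $(\frac 35,\frac 45)$ (Proposition~\ref{prop00300}) together with the scaling Lemma~\ref{lemma4} — replacing $A(1)$ by $A^{(2)}(1)$ produces an optimal set of $F(n)-\te{card}(A(1))+\te{card}(A^{(2)}(1))$ means, and this is the cheapest way to spend those extra quantizers. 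Iterating, each time splitting the current $\succ$-largest block (for the $\gn$-type blocks this just doubles the number of $\gn$-quantizers carried in each $L_\go$, and for $S(0)$ it passes from centroids to optimal two-point pairs, and then to the next level of centroids), and using Lemma~\ref{lemma1900} for the order, after finitely many steps I reach $\mathop{\uu}\limits_{k=1}^{n}S^{(2)}(k)\uu S^{(2)(2)}(0)$, which by Lemma~\ref{lemma4300} equals $\ga_{F(n+1)}$. Hence $\ga_{F(n+1)}$ is an optimal set of $F(n+1)$-means.

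For the quantization error I would use Remark~\ref{remark100}, which gives $\ga_{F(n)}=S_1(\ga_{F(n-1)})\uu\ga_{2^{a(n)}}(\gn)\uu S_2(\ga_{F(n-1)})$, so $\ga_{F(n)}$ carries $F(n-1)$ points in each of $J_1$, $J_2$ and $2^{a(n)}$ points in $L$; then, arguing as in Proposition~\ref{prop55} (its Section~\ref{second} analogue) and using $V_{2^m}(\gn)=\frac 1{49^m}W$ and $P|_L=\gn$, one gets for all $n\ge 2$
\[V_{F(n)}=\frac 2{75}\,V_{F(n-1)}+\frac 13\,\frac 1{49^{\,n-1}}\,W .\]
This is a first-order linear recursion. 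A particular solution is $D\,(1/49)^{n-1}$ with $D\bigl(1-\tfrac{98}{75}\bigr)=\tfrac W3$, i.e. $D=-\tfrac{25W}{23}=-\tfrac{3}{368}$ since $W=\tfrac 3{400}$; the homogeneous solution is $C\,(2/75)^{n-1}$. Determining $C$ from $V_{F(2)}=\tfrac 2{75}V_{F(1)}+\tfrac{W}{3\cdot 49}$ yields $C=\tfrac{69071}{6170325}$, which is the asserted formula for $n\ge 2$. The value $n=1$ is genuinely exceptional, since $V_{F(1)}=V_4$ does not obey the self-similar pattern.

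The main obstacle is the inductive step's assertion that greedy block-refinement remains \emph{globally} optimal at every intermediate cardinality, not merely best among one-block-split perturbations of $\ga_{F(n)}$. This is where Proposition~\ref{prop00300} (the product structure of optimal sets), Lemma~\ref{lemma4} (self-similar scaling), and the exchange inequality of Lemma~\ref{lemma7100} must be combined carefully, and it is also why the $n=2$ base case has to be argued by hand: knowing that an optimal set avoids the two gap intervals does not by itself determine how its points split between $L$ and $J_1\uu J_2$, so that has to be settled by explicit distortion estimates before the recursion can take over.
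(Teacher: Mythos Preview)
Your proposal is correct and follows essentially the same route as the paper: the base case $n=1$ is Proposition~\ref{prop300}, the optimality of $\ga_{F(n)}$ for $n\ge 2$ is obtained by invoking the block-refinement argument of Proposition~\ref{prop351} (with Lemma~\ref{lemma1900}, Lemma~\ref{lemma4300}, and the exchange inequality in place of their Section~\ref{first} analogues). The only cosmetic difference is in the error computation: the paper sums the series $\sum_{k=0}^{n-1}\frac 13(\frac 2{75})^k\frac{W}{49^{a(n-k)}}+(\frac 2{75})^n V$ directly to obtain the closed form, whereas you derive and solve the recursion $V_{F(n)}=\frac 2{75}V_{F(n-1)}+\frac 13\frac W{49^{n-1}}$; these are of course equivalent. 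One small remark: the closed form in the statement actually extends to $n=1$ as well (a direct check gives $\frac{69071}{6170325}-\frac{3}{368}=\frac{13057}{4292400}$), so $n=1$ is exceptional only in that the recursion has no predecessor, not in the formula itself.
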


\begin{proof}
By Proposition~\ref{prop300}, the set $\ga_{F(1)}=\set{S_1(\frac 12), T_1(\frac 12), T_2(\frac 12), S_2(\frac 12)}$ is an optimal set of $F(1)$-means with quantization error $V_4=\frac{13057}{4292400}$.
Proceeding in the same way as Proposition~\ref{prop351}, we can show that for any $n\geq 2$, the set $\ga_{F(n)}$ forms an optimal set of $F(n)$-means, and the quantization error $V_{F(n)}$ is given by
\begin{align*}
&\int\min_{a\in \ga_{F(n)}}(x-a)^2 dP=\sum_{k=0}^{n-1}\sum_{\go\in I^{k}} \int_{L_\go}\min_{a\in S_\go(\ga_{2^{a(n-k)}}(\gn))}(x-a)^2 dP+\sum_{\go\in I^{n}}\int_{J_\go}(x-S_\go(\frac 12))^2 dP\\
&=\sum_{k=0}^{n-1}\sum_{\go\in I^{k}} \frac 13 \frac 1{75^{k}} \frac W{49^{a(n-k)}} +(\frac 2{75})^{n} V=\sum_{k=0}^{n-1} \frac 13 \Big(\frac 2{75}\Big)^k \frac W{49^{a(n-k)}} +(\frac 2{75})^{n} V\\
&=\sum_{k=0}^{n-2} \frac 13 \Big(\frac 2{75}\Big)^k \frac W{49^{n-k-1}}+ \frac 13 \Big(\frac 2{75}\Big)^{n-1} \frac W{49} +(\frac 2{75})^{n} V\\
&=\frac 13 \frac {W}{49}\frac{(\frac 2{75})^{n-1}-(\frac 1{49})^{n-1}}{\frac 2{75}-\frac 1{49}}+ \frac 13 \Big(\frac 2{75}\Big)^{n-1} \frac W{49} +(\frac 2{75})^{n} V\\
&=\Big(\frac 2{75}\Big)^{n-1}\Big( \frac 1 3\frac{W}{49} \frac {3675} {23} +\frac 13\frac W{49}+\frac 2{75} V\Big)-\Big(\frac 1{49}\Big)^{n-1} \frac 13 \frac {W}{49}  \frac {3675} {23}\end{align*}
yielding
\begin{equation*} \label{eq68} V_{F(n)}=\frac{69071}{6170325}\left(\frac{2}{75}\right)^{n-1}-\frac{3}{368}\Big(\frac 1{49}\Big)^{n-1}.
\end{equation*}
Thus, the proof of the proposition is complete.
\end{proof}

\subsection{Asymptotics for the $n$th quantization error $V_n(P)$}

In this subsection, we turn to the investigation of the quantization dimension $D(P)$ and the $D(P)$-dimensional quantization coefficient for the condensation measure $P.$ Notice that in this case $D(\gn)=\gb$, where $\gb=\frac{\log 2}{\log 7}$ which is the Hausdorff dimension of the limit set generated by the similarity maps $T_1$ and $T_2$ considered in this section.

\begin{theorem} \label{Th200}
$D(P)=\lim_{n\to \infty} \frac{2 \log n}{-\log V_n(P)}=\gk ; $ hence, the quantization dimension of $P$ exists and is equal to the critical value of the condensation system.
\end{theorem}
\begin{proof} Since $(\frac 13 (\frac 15)^2)^{\frac {\gk}{2+\gk}}=\frac{1}{2},\ \ \gk=\frac{2\log 2}{\log 75-\log 2}$.
For $n\in \D N$, $n\geq 4$,  let $\ell(n)$ be the least positive integer such that $F(\ell(n))\leq n<F(\ell(n)+1)$. Then,
$V_{F(\ell(n)+1)}<V_n\leq V_{F(\ell(n))}$. Thus, we have
\begin{align*}
\frac {2\log\left(F(\ell(n))\right)}{-\log\left(V_{F(\ell(n)+1)}\right)}< \frac {2\log n}{-\log V_n}< \frac {2\log\left(F(\ell(n)+1)\right)}{-\log\left(V_{F(\ell(n))}\right)}.
\end{align*}
Notice that when $n\to \infty$, then $\ell(n)\to \infty$. Recall that $F(\ell(n))=(\ell(n)+3) 2^{\ell(n)-1}$, and so by Proposition~\ref{prop720}, we have
\begin{align*}
&\lim_{\ell(n)\to\infty} \frac {2\log\left(F(\ell(n))\right)}{-\log\left(V_{F(\ell(n)+1)}\right)}=2 \lim_{\ell(n)\to\infty} \frac {\log(\ell(n)+3)+(\ell(n)-1)\log 2}{-\log\Big( \frac{69071}{6170325} \left(\frac{2}{75}\right)^{\ell(n)}-\frac{3}{368} \left(\frac{1}{49}\right)^{\ell(n)}\Big)}\\
&=2 \lim_{\ell(n)\to\infty} \frac{\frac{69071}{6170325} \left(\frac{2}{75}\right)^{\ell(n)}-\frac{3}{368} \left(\frac{1}{49}\right)^{\ell(n)}}{-\frac{69071}{6170325} \left(\frac{2}{75}\right)^{\ell(n)}\log \frac 2{75}+\frac{3}{368} \left(\frac{1}{49}\right)^{\ell(n)}\log \frac{1}{49}}\Big(\frac 1{\ell(n)+3} +\log 2 \Big)\\
&=\frac {2\log 2}{-\log \frac 2{75}}=\frac{2\log 2}{\log 75-\log 2}=\gk.
\end{align*}
Similarly, $\mathop{\lim}\limits_{\ell(n)\to\infty} \frac {2\log\left(F(\ell(n)+1)\right)}{-\log\left(V_{F(\ell(n))}\right)}=\gk$. Thus, $\gk\leq \liminf_n \frac{2\log n}{-\log V_n}\leq \limsup_n \frac{2\log n}{-\log V_n}\leq  \gk$ implying the fact that the quantization dimension of the measure $P$ exists and is equal to $\gk$.
\end{proof}

\begin{theorem} \label{Th300}
The $D(P)$-dimensional quantization coefficient for the condensation measure $P$ is infinity.
\end{theorem}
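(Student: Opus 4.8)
The plan is to combine the exact formula for $V_{F(n)}(P)$ from Proposition~\ref{prop720} with the identity $D(P)=\gk$ from Theorem~\ref{Th200}, and then to sandwich a general index $n$ between two consecutive terms of the canonical sequence $\{F(n)\}_{n\ge 1}$. Since $D(P)=\gk$, the $D(P)$-dimensional lower quantization coefficient is $\liminf_{n\to\infty} n^{\frac 2\gk}V_n(P)$, so it suffices to show this equals $+\infty$ (which then forces the upper coefficient, hence the coefficient itself, to be $+\infty$). Two elementary facts will be used: from $\gk=\frac{2\log 2}{\log 75-\log 2}$ one gets $\frac 2\gk=\frac{\log 75-\log 2}{\log 2}$, hence $2^{\frac 2\gk}=\frac{75}{2}$; and since $F(n)=(n+3)2^{n-1}$, this gives $F(n)^{\frac 2\gk}=(n+3)^{\frac 2\gk}\big(\frac{75}{2}\big)^{n-1}$.

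First I would evaluate the canonical subsequence. Multiplying the last identity by the closed form $V_{F(n)}(P)=\frac{69071}{6170325}\big(\frac{2}{75}\big)^{n-1}-\frac{3}{368}\big(\frac{1}{49}\big)^{n-1}$ and using $\big(\frac{75}{2}\cdot\frac{2}{75}\big)^{n-1}=1$ together with $\big(\frac{75}{2}\cdot\frac{1}{49}\big)^{n-1}=\big(\frac{75}{98}\big)^{n-1}$, one obtains
\[F(n)^{\frac 2\gk}V_{F(n)}(P)=(n+3)^{\frac 2\gk}\Big(\frac{69071}{6170325}-\frac{3}{368}\Big(\frac{75}{98}\Big)^{n-1}\Big).\]
Since $\frac{75}{98}<1$, the parenthesised factor tends to the positive constant $\frac{69071}{6170325}$, while $(n+3)^{\frac 2\gk}\to\infty$ because $\frac 2\gk>0$; hence $F(n)^{\frac 2\gk}V_{F(n)}(P)\to\infty$.

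To pass from this subsequence to every $n$, I would use that $k\mapsto V_k(P)$ is non-increasing. For $n\ge F(2)$ let $\ell=\ell(n)$ be the unique integer with $F(\ell)\le n<F(\ell+1)$, so that $\ell(n)\to\infty$ as $n\to\infty$; monotonicity gives $V_n(P)\ge V_{F(\ell+1)}(P)$ and $n\ge F(\ell)$ gives $n^{\frac 2\gk}\ge F(\ell)^{\frac 2\gk}$, whence
\[n^{\frac 2\gk}V_n(P)\ \ge\ \Big(\frac{F(\ell)}{F(\ell+1)}\Big)^{\frac 2\gk} F(\ell+1)^{\frac 2\gk}V_{F(\ell+1)}(P).\]
Because $\frac{F(\ell+1)}{F(\ell)}=2\,\frac{\ell+4}{\ell+3}\le 3$ for all $\ell\ge 1$, the first factor is bounded below by the fixed positive constant $3^{-\frac 2\gk}$, and the second tends to $\infty$ by the previous step; therefore $n^{\frac 2\gk}V_n(P)\to\infty$, so in particular $\liminf_{n\to\infty}n^{\frac 2\gk}V_n(P)=\infty$, which is the assertion. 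The only step that needs a little care is this last sandwich — one must check that $F(\ell+1)/F(\ell)$ stays bounded (it does, by $3$), so that discarding a bounded multiplicative factor cannot destroy divergence, and that $\ell(n)\to\infty$; the rest is just the two identities above and the simplification of $F(n)^{\frac 2\gk}V_{F(n)}(P)$, and I do not anticipate a genuine obstacle, as this is the same ``interpolate between consecutive $F(\ell)$'' device used in the proof of Theorem~\ref{Th200}.
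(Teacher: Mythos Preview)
Your proof is correct and follows essentially the same route as the paper: both compute $F(n)^{2/\gk}V_{F(n)}(P)$ explicitly using $2^{2/\gk}=\tfrac{75}{2}$ and Proposition~\ref{prop720}, observe that this tends to $\infty$, and then sandwich a general $n$ between consecutive $F(\ell)$ using monotonicity of $V_n$. Your version is slightly cleaner in that you only track the lower bound (which suffices) and use the crude estimate $F(\ell+1)/F(\ell)\le 3$ rather than the paper's exact limit $F(\ell)/F(\ell+1)\to\tfrac12$, but the substance is identical.
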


\begin{proof}
For $n\in \D N$, $n\geq 4$,  let $\ell(n)$ be the least positive integer such that $F(\ell(n))\leq n<F(\ell(n)+1)$. Then,
$V_{F(\ell(n)+1)}<V_n\leq V_{F(\ell(n))}$ implying $(F(\ell(n)))^{2/\gk}V_{F(\ell(n)+1)}<n^{2/\gk}V_n< (F(\ell(n)+1))^{2/\gk} V_{F(\ell(n))}$. As $\ell(n)\to \infty$ whenever $n\to \infty$, we have
\[\lim_{n\to \infty} \frac{F(\ell(n))}{F(\ell(n)+1)}=\lim_{n\to \infty} \frac{(\ell(n)+3)2^{\ell(n)-1}}{(\ell(n)+4)2^{\ell(n)}}=\frac 1{2}.\]
Next, since $\gk=\frac{2\log 2}{\log 75-\log 2}$, we have
\begin{align*}
&\lim_{n\to \infty} (F(\ell(n)))^{2/\gk} V_{F(\ell(n)+1)}=\frac 1{4^{1/\gk}} \lim_{n\to \infty} (F(\ell(n)+1))^{2/\gk} V_{F(\ell(n)+1)}\\
&=\frac 1{4^{1/\gk}} \lim_{n\to \infty} (\ell(n)+4)^{2/\gk} 2^{2\ell(n)/\gk} \Big(\frac{69071}{6170325} \left(\frac{2}{75}\right)^{\ell(n)}-\frac{3}{368} \left(\frac{1}{49}\right)^{\ell(n)}\Big)\\
&=\frac 1{4^{1/\gk}} \lim_{\ell(n)\to \infty} (\ell(n)+4)^{2/\gk} \Big(\frac {75}2\Big)^{\ell(n)} \Big(\frac{69071}{6170325} \left(\frac{2}{75}\right)^{\ell(n)}-\frac{3}{368} \left(\frac{1}{49}\right)^{\ell(n)}\Big)\\
&=\frac 1{4^{1/\gk}} \lim_{\ell(n)\to \infty} (\ell(n)+4)^{2/\gk} \Big(\frac{69071}{6170325} -\frac{3}{368} \left(\frac{75}{98}\right)^{\ell(n)}\Big)=\infty,
\end{align*} and similarly
\begin{align*}
\lim_{n\to \infty} (F(\ell(n)+1))^{2/\gk}V_{F(\ell(n))}=4^{1/\gk}\lim_{n\to \infty} (F(\ell(n)))^{2/\gk}V_{F(\ell(n))}=\infty
\end{align*}
yielding the fact that $\infty \leq \mathop{\liminf}\limits_{n\to\infty} n^{2/\gk} V_n(P)\leq \mathop{\limsup}\limits_{n\to \infty} n^{2/\gk}V_n(P)\leq \infty$, i.e., the $D(P)$-dimensional quantization coefficient for the condensation measure $P$ is infinity.
\end{proof}

Since $D(P)=\gk=\frac{2\log 2}{\log75-\log 2} > D(\gn)=\frac {\log 2}{\log 7} , $ the following proposition is also true.
\begin{prop} \label{prop3500}
$D(P)=\max \set{\gk, D(\gn)}$.

\end{prop}

\section{Condensation measures $P$ with self-similar measure $\gn$ satisfying \\ $D(\gn)>\gk$ and $D(\gn)=\gk$}\label{fourth}

In this section, we consider the quantization when $s=\frac{1}{5} $ and $s=\frac{\sqrt{6}}{15} . $  Since the proofs are the same as in the previous two cases, we will summarize the results and make some concluding remarks.  The condensation measures $P$ in these cases are generated by the systems $(\set{S_1, S_2}, (\frac 13, \frac 13, \frac 13), \gn)$,
where the measures $\gn$ are given by $\gn=\frac 12 \gn \circ T_1^{-1}+\frac 12\gn\circ T_2^{-1}$ with\\
\indent $T_1(x)=\frac{1}{5}x+\frac{8}{25}$, and $T_2(x)=\frac{1}{5}x+\frac{12}{25}$ in the case $s=\frac{1}{5}, $ and \\
\indent $T_1(x)=\frac{\sqrt{6}}{15}x+\frac{2}{5}-\frac{2 \sqrt{6}}{75}$, and $T_2(x)=\frac{\sqrt{6}}{15}x+\frac{3}{5}-\frac{\sqrt{6}}{25}$ in the case $s=\frac{\sqrt{6}}{15} . $
\medskip

\noindent Table 1 below outlines the information for the quantization of $P$ in each case.  The results in table are consequence of the particular facts that:
\begin{itemize}
\item[a)]  For $n\geq 3$, $\ga_n\ii J_1\neq \es$, $\ga_n \ii J_2\neq \es$, and $\ga_n\ii L\neq \es$, and $\ga_n$ does not contain any point from the open intervals $(\frac 15, \frac 25)$ and $(\frac 35, \frac 45)$.
\item[b)]  The sets $SF(n)$ and $SF^\ast (n)$, and the order relation $\succ$ among the elements of them are also defined the same way as in Section~\ref{second}.  Furthermore,
\begin{itemize}
\item[(i)] $ \cdots \succ  S(4)\succ S(3)\succ S(2)\succ S(0) \succ S^{(2)}(0)\succ S(1).$
\item[(ii)] Although the canonical sequences $\set{a(n)}_{n\geq 1}$ and  $\set{F(n)}_{n\geq 1}$ are identical, the order in the elements of $SF^\ast(n)$ are different.
\item[(iii)]  For $A, B\in SF^\ast(n)$ with $A\succ B$, the distortion error due to the set $ (SF^\ast(n)\setminus A)\uu A^{(2)}\uu B$ is less than the distortion error due to the set $ (SF^\ast (n)\setminus B)\uu B^{(2)}\uu A$.
\end{itemize}
\end{itemize}

\medskip

\begin{table}
\caption{}
\label{tab: table1}
\begin{tabular}{ |p{3.4cm}||p{6.1cm}|p{6.1cm}|  }
 \hline
 & Case $s=\frac{1}{5} $ & Case $s=\frac{\sqrt{6}}{15} $ \\
\hline
$E(\nu), \ V(\nu) $ & $\frac{1}{2}, \frac{1}{150} $    &   $\frac{1}{2},\ \frac{77-10 \sqrt{6}}{7500} $ \\
\hline
$E(P), V(P) $ & $\frac12, \ \frac{49}{438} $ & $\frac12, \  \frac{2413-10 \sqrt{6}}{21316} $ \\
\hline
$\alpha_1,\ V_1$ & $\{\frac12 \}, \ \frac{49}{438} $ & $\alpha_1$ same, $  \frac{2413-10 \sqrt{6}}{21316} $ \\
\hline
$\alpha_2,\ V_2 $ & $\{\frac{31}{150}, \frac{119}{150} \} \ \frac{21211}{821250}$ & $\{\frac{\sqrt{6}+90}{450},\frac{30-\sqrt{6}}{450}\}, \  \frac{155430 \sqrt{6}+4167521}{179853750}$  \\
\hline
$\alpha_3,\ V_3 $ & $\{\frac{1}{10}, \frac{1}{2}, \frac{9}{10} \}, \ \frac{19}{3650}$  & $\alpha_3$ same,$ \frac{10447-750 \sqrt{6}}{1598700} $ \\
\hline
$\alpha_4,\ V_4 $ & $\{S_1(\frac 12), T_1(\frac 12), T_2(\frac 12), S_2(\frac 12)\}, \ \frac{841}{273750}$  & $\alpha_4$ same, $ \frac{93298-740 \sqrt{6}}{29975625}$  \\
\hline
$a(n)$  & 1 if $n=1, \ n-1$ for $n>1$  & same \\
\hline
$F(n)$  & $(n+3) 2^{n-1} $  & same \\
\hline
$V_{F(n)}$  & $\frac 1{25^{n-1}}-\frac{164}{45625}\Big(\frac 2{75} \Big)^{n-1} $ if  $n\geq 2  $  & $(\frac{2}{75})^{n-1}(\frac{V(\nu)}{3}n+\frac{2}{75} V(P))$ for $n\geq 2$\\
\hline
$\kappa $  & $\frac{2\log 2}{\log75-\log 2} $  & $\frac{2\log 2}{\log75-\log 2} $ \\
\hline
$D(\nu)$  & $\log 2 / \log 5 $ & $\frac{2\log 2}{\log75-\log 2} $ \\
\hline
$D(P)$  & $\log 2 / \log 5 $  & $\frac{2\log 2}{\log75-\log 2} $ \\
\hline
Quant. coefficients  & $\infty $  & $\infty $ \\
\hline
\end{tabular}
\end{table}

\medskip

Since, from
$(\frac 13 (\frac 15)^2)^{\frac {\gk}{2+\gk}}=\frac{1}{2}$ we have $\gk=\frac{2\log 2}{\log75-\log 2} $ as the critical value, it follows that for these condensation systems $D(P)=\max \set{\gk, D(\gn)}$ as well.

\bigskip

\subsection{Concluding Remarks} \label{conr}


For the condensation system $(\set{S_1, S_2}, (\frac 13, \frac 13, \frac 13), \gn)$ generating the condensation measure $P$, where $S_1, S_2$ are the similarity mappings
and $\gn$ is a self-similar measure defined by $\nu=\frac 12 \nu\circ T_1^{-1}+\frac 12 \nu\circ T_2^{-1}$ with $T_1(x)=sx+(1-s)\frac 2 5 $ and $T_2(x)=sx+(1-s)\frac 3 5 , \ 0<s<\frac 13 , $ by Theorem~\ref{Th3}, Theorem~\ref{Th300}, and Table 2, we see that when $s= \frac{1}{7}, \frac{\sqrt{6}}{15}, \frac 15$, the $D(P)$ dimensional lower quantization coefficient for the condensation measure $P$ is infinity; on the other hand, if $s=\frac 13$, then the $D(P)$-dimensional lower and upper quantization coefficients are finite, positive and unequal. Notice that  $\frac{1}{7}< \frac{\sqrt{6}}{15}< \frac 15<\frac 1 3$. Thus, it is worthwhile to investigate the least upper bound of $s$ for which the  $D(P)$ dimensional lower quantization coefficient for the condensation measure $P$ is infinity. Such a problem still remains open.

Observe that, for $s= \frac{1}{7} $ and $s= \frac{\sqrt{6}}{15},$ although the quantization coefficients are infinity, whereas the relations between $D(P)$ and $D(\nu)$ are not the same.  Again, it is worthwhile to investigate the least upper bound of $s$ for which the $D(P)=D(\nu)$ while $D(P)$-dimensional lower quantization coefficient for the condensation measure $P$ is infinity.

\begin{table}
\caption{}
\label{tab: table2}
\begin{tabular}{ |p{4cm}||p{3.5cm}|p{3.5cm}|p{4cm}|  }
 \hline
 Contracting factor & Quant. dimensions  & Quant. coefficients & $D(\nu)$ vs critical value \\
 \hline
$s=\frac{1}{3} $  &  $D(\nu)=D(P)$   & finite, positive  &  $D(\nu )>\kappa$ \\
\hline
$s=\frac{1}{5} $   &  $D(\nu)=D(P)$   & infinite   &  $D(\nu )>\kappa$ \\
\hline
$s=\frac{\sqrt{6}}{15} $  & $D(\nu)=D(P)$  & infinite  & $D(\nu ) = \kappa$ \\
\hline
$s=\frac{1}{7} $    & $D(\nu)<D(P)$  & infinite  &  $D(\nu )<\kappa$\\
 \hline
\end{tabular}
\end{table}

\medskip

{\bf Data Availability Statement.}  Data sharing not applicable to this article as no datasets were generated or analyzed during the current study.  However, detailed calculations for the items summarized in Table 1 are available from the corresponding author upon request.

\end{document}